\documentclass[11pt,a4paper]{article}

\usepackage{cite}

\usepackage{amsmath}
\usepackage{amscd}
\usepackage{amssymb}
\usepackage{latexsym}
\usepackage{color}
\numberwithin{equation}{section}
\usepackage[normalem]{ulem}

\usepackage[colorlinks,
           linkcolor=blue,      
           anchorcolor=blue,  
           citecolor=red,       
            ]{hyperref}

\newcommand{\DD}{\mathbb{D}}

\renewcommand{\div}{{\rm div}\,}
\newcommand{\tr}{{\rm tr}\,}

  \newcommand{\vrp}{{\vr_+}}
    \newcommand{\vrm}{{\vr_-}}
      \newcommand{\np}{{n_+}}
    \newcommand{\nm}{{n_-}}
      \newcommand{\bvrp}{{\tilde\vr_+}}
    \newcommand{\bvrm}{{\tilde\vr_-}}
      \newcommand{\gp}{{\gamma^+}}
      \newcommand{\gm}{{\gamma^-}}
      \newcommand{\Hp}{{H_+}}
      \newcommand{\Hm}{{H_-}}
      \newcommand{\Pp}{{p_+}}
      \newcommand{\Pm}{{p_-}}
      \newcommand{\Nu}{\mathcal{V}}
\newcommand{\Nutx}{\mathcal{V}_{t,x}}
\newcommand{\lel}{\left\langle}
\newcommand{\ril}{\right \rangle}

\def\tilde{\widetilde}

\newcommand{\n}{\nabla}

\newcommand{\BR}{{\tilde{R}}}
\newcommand{\BQ}{{\tilde{Q}}}

\newcommand{\lap}{\Delta}

\newcommand{\Div}{\operatorname{div}}

\newcommand{\pf}{{\noindent\it Proof.~}}
\newcommand{\Ov}[1]{\overline{#1}}

\newcommand{\vS}{{\mathbb{S}}}
\newcommand{\vw}{\vc{w}}

\newcommand{\vr}{\varrho}

\newcommand{\vrn}{\vr_n}

\newcommand{\vun}{\vu_n}

\newcommand{\vu}{\vc{u}}
\newcommand{\vv}{\vc{v}}

\newcommand{\vc}[1]{{\bf #1}}
\newcommand{\vcg}[1]{{\pmb #1}}

\newcommand{\Grad}{\nabla}

\newcommand{\pt}{\partial_{t}}
\newcommand{\ptb}[1]{\partial_{t}(#1)}
\newcommand{\Dt}{\frac{ {\rm d}}{{\rm d}t}}

\newcommand{\dx}{{\rm d} {x}}

\newcommand{\dt}{{\rm d} t }

\newcommand{\dxdt}{\dx \,\dt}
\newcommand{\lr}[1]{\left( #1 \right)}
\newcommand{\intO}[1]{\int_{\Omega} #1 \ \dx}

\newcommand{\intOB}[1]{\int_{\Omega} \left( #1 \right) \ \dx}

\newcommand{\intTO}[1]{\int_0^T\!\!\!\! \int_{\Omega} #1 \ \dxdt}
\newcommand{\inttau}[1]{\int_0^\tau #1 \ \dt}
\newcommand{\inttauO}[1]{\int_0^\tau\!\!\!\! \int_{\Omega} #1 \ \dxdt}

\newcommand{\inttauOB}[1]{ \int_0^\tau\!\!\!\! \int_{\Omega} \left( #1 \right) \ \dxdt}

\newcommand{\eq}[1]{\begin{equation}
\begin{split}
#1
\end{split}
\end{equation}}
\newcommand{\eqh}[1]{\begin{equation*}
\begin{split}
#1
\end{split}
\end{equation*}}
\newcommand{\ep}{\varepsilon}

\newcommand{\R}{\mathbb{R}}
\newtheorem{thm}{Theorem}[section]
\newtheorem{lemma}[thm]{Lemma}
\newtheorem{prop}[thm]{Proposition}
\newtheorem{df}{Definition}
\newtheorem{rmk}[thm]{Remark}

\title{\bf{Dissipative measure-valued solutions and weak--strong uniqueness
for a viscous Baer--Nunziato system with pressure relaxation}}

\author{
 Nilasis Chaudhuri \thanks{Faculty of Mathematics, Informatics and Mechanics, University of Warsaw, Warsaw, Poland, E-mail: \texttt{nchaudhuri@mimuw.edu.pl}}\quad 
Milan Pokorn\'{y}\thanks{Faculty of Mathematics and
Physics Mathematical Institute, Charles University, Sokolovsk\'{a} 83,
186 75 Prague 8, Czech Republic, E-mail: \texttt{pokorny@karlin.mff.cuni.cz}} \quad 
Ewelina Zatorska\thanks{Mathematics Institute, University of Warwick, Zeeman Building, Coventry CV4 7AL, United Kingdom, E-mail:  \texttt{ewelina.zatorska@warwick.ac.uk}}
}

\begin{document}
 \maketitle 

\abstract
We study a viscous one-velocity Baer--Nunziato system for two barotropic
compressible fluids in a bounded three-dimensional domain. In contrast with
models in which the volume fraction is merely transported or determined by an
algebraic equilibrium constraint, it satisfies a differential closure driven
by the pressure gap between the phases. For arbitrary finite-energy initial
data and adiabatic exponents $\gamma^\pm>1$, we construct global-in-time
dissipative measure-valued solutions and prove dissipative
measure-valued--strong uniqueness relative to any sufficiently regular
solution with the same initial data. The principal difficulties are the singular and
non-continuous behavior of the pressure-relaxation source at vanishing volume
fractions, the associated concentration defects, and the lack of direct
coercivity of the standard thermodynamic relative energy with respect to the
volume fraction. These are resolved by an endpoint cutoff argument and an
augmented relative energy coupled to the renormalized volume-fraction
equation.

\medskip

\noindent {\bf Keywords:} one-velocity Baer--Nunziato system, pressure
relaxation, dissipative measure-valued solution, weak--strong uniqueness,
relative energy, Young measures.
\vspace{4mm}

\noindent {\bf2020 Mathematics Subject Classification:} 35Q35, 76N10.


\section{Introduction}\label{Sec:introduction}

Two-phase compressible flows combine the analytical difficulties of
compressible fluid mechanics with an additional geometric variable: the
volume occupied by each constituent. At the averaged level, the phase
fractions, phase densities, and velocities cannot be determined from the
balance laws alone, and a closure relation must be prescribed. The choice of
closure is not merely a modelling detail. It changes the mathematical
structure of the system, the available entropy, and the compactness mechanisms
that survive under approximation. Classical accounts of averaged multiphase
models are given by Drew--Passman \cite{DP} and Ishii--Hibiki \cite{IsHi}; see
also the hierarchy of reduced models in \cite{Bouchut} and the recent survey
\cite{Br-vulcano}. The modelling and computational relevance of the
Baer--Nunziato hierarchy is illustrated by the seven-equation formulation of
Saurel and Abgrall \cite{SaurelAbgrall99} and by mechanically equilibrated
reductions such as the models of Kapila et al.\ \cite{Kapila01} and
Murrone--Guillard \cite{MurroneGuillard05}.

The starting point of the present paper is a viscous one-velocity
Baer--Nunziato model. The original Baer--Nunziato theory \cite{BN} assigns
separate velocities and pressures to the phases. One-velocity reductions arise
after mechanical relaxation, but the pressure disequilibrium remains visible
through a source in the equation for the volume fraction. Such relaxation
terms have been derived from heterogeneous compressible flows in
\cite{BH19,BBR}; related pressure-relaxation limits and their stability have
been studied for strong solutions in \cite{Burtea,BCG26}. These derivations
provide an important reason to retain the volume-fraction equation rather than
replace it a priori by an algebraic pressure-equilibrium constraint, especially when the viscous models are considered.
More generally, entropy-dissipative relaxation is a classical theme in the
theory of hyperbolic balance laws; we refer to Chen--Levermore--Liu
\cite{CLL94} and to the relative-entropy framework of Tzavaras
\cite{Tzavaras05}.

A particularly close microscopic-to-macroscopic connection is provided by
Hillairet, Mathis, and Seguin \cite{HillairetMathisSeguin23}, who rigorously
derive a one-dimensional averaged bubbly-flow system, a
counterpart of the model studied here. For discussion of this result in the context of  spherical bubble models with moving sharp interfaces we refer to \cite{BurteaGerardVaret25,BurteaGerardVaret26}.

The system considered here reads
\begin{subequations}\label{intro-system}
\begin{align}
 \partial_tR+\Div(R\vu)&=0,\label{intro-R}\\
 \partial_tQ+\Div(Q\vu)&=0,\label{intro-Q}\\
 \partial_t((R+Q)\vu)+\Div((R+Q)\vu\otimes\vu)
 -\Div\vS(\Grad\vu)+\Grad p(\alpha,R,Q)&=\vc0,\label{intro-u}\\
 \partial_t\alpha+\vu\cdot\Grad\alpha
 &=\omega(\alpha,R,Q),\label{intro-alpha}
\end{align}
\end{subequations}
where $R=\alpha\vrp$ and $Q=(1-\alpha)\vrm$ denote the partial masses of the
two phases, $\alpha\in[0,1]$ is the volume fraction of phase~$+$,
and $\vu$ is the common velocity,
\begin{align}\label{intro-pressure-source}
 p(\alpha,R,Q)
 &:=\alpha p_+\left(\frac R\alpha\right)
 +(1-\alpha)p_-\left(\frac Q{1-\alpha}\right),\nonumber\\
 \omega(\alpha,R,Q)
 &:=\frac{\alpha(1-\alpha)}{\lambda+2\mu}
 \left[
 p_+\left(\frac R\alpha\right)
 -p_-\left(\frac Q{1-\alpha}\right)
 \right],
\end{align}
and $p_\pm(z)=z^{\gamma^\pm}$ with $\gamma^\pm>1$. 
We consider \eqref{intro-system} in a bounded domain
$\Omega\subset\mathbb R^3$, with no-slip condition on $\vu$. Following \cite{BBR,Burtea}, we consider the same Newtonian stress tensor $\vS=\vS(\Grad\vu)$  in each of the two phases:
\eq{\label{stres:mu_lambda}
\vS(\Grad\vu)=\mu\lr{\Grad\vu+\Grad^T\vu}+\lambda\Div\vu\mathbb{I},}
where  $\mu,\lambda$ are two viscosity coefficients.

The source $\omega$ drives the two pressures towards equilibrium whenever both
phases are present. Its mobility $\alpha(1-\alpha)$ vanishes at
$\alpha=0,1$, where the relaxation mechanism degenerates.

 With
\[
 H_\pm(z)=\frac{z^{\gamma^\pm}}{\gamma^\pm-1},
\qquad
 P(\alpha,R,Q)
 =\alpha H_+\left(\frac R\alpha\right)
 +(1-\alpha)H_-\left(\frac Q{1-\alpha}\right),
\]
the total energy density is given by
\[
 E(\alpha,R,Q,\vu)=\frac12(R+Q)|\vu|^2+P(\alpha,R,Q).
\]
The formal energy balance reveals both the stabilizing pressure-relaxation dissipation
\begin{equation}\label{intro-relaxation-dissipation}
 \frac{\alpha(1-\alpha)}{\lambda+2\mu}
 \left[
 p_+\left(\frac R\alpha\right)
 -p_-\left(\frac Q{1-\alpha}\right)
 \right]^2.
\end{equation}
At $\alpha=0$ or $\alpha=1$, however, the ratios in
\eqref{intro-pressure-source} are singular. The energy gives only weak
compactness of the phase variables, provides no spatial regularization of
$\alpha$, and does not keep the volume fraction uniformly away from its
endpoints. When suitably extended, the energy $E$ is  a lower-semicontinuous function, thus finite energy
forces $R=0$ on $\{\alpha=0\}$ and $Q=0$ on $\{\alpha=1\}$, but neither the
pressure-relaxation source nor its square is a continuous function on the closed phase space.
Identifying the transport term, pressure, and relaxation source in an
approximation limit therefore requires a Young-measure formulation together
with a separate analysis of the endpoint regions. This endpoint phenomenon is
one of the principal analytical difficulties of the paper.

The purpose of this work is to construct global dissipative measure-valued
solutions to \eqref{intro-system} and to prove their weak--strong uniqueness.


\medskip
\noindent\textbf{Relation to existing theories.}
It is useful to distinguish the present differential pressure-relaxation
closure from three nearby classes of models.

First, the one-dimensional one-velocity liquid--gas drift-flux systems of
Evje and coauthors \cite{EvjeKarlsen08,EvjeWenZhu17} also consist of two
partial-mass equations coupled to one mixture momentum balance, and the later
theory allows transitions to single-phase states. 
 Their pressure is prescribed directly in terms of the
partial masses, however, and there is no independent volume-fraction equation.

Second, in the \emph{pure-transport closure}, the volume fraction  is transported without the pressure-gap
source in \eqref{intro-alpha}. Thus, the transport structure is
fundamentally simpler in the relative-energy argument: no nonlinear
pressure-difference source has to be compared between the weak and strong
states. 
Global finite-energy weak solutions for related
one-velocity bifluid systems were constructed in \cite{Novotny19,KKNN};
weak--strong uniqueness and dissipative turbulent solutions were studied in
\cite{JN19,JKNN}, and the heat-conducting case in \cite{KN24}. 

Third, in the \emph{algebraic closure}, the phase pressures are constrained
by an implicit equilibrium relation, from which the volume fraction equation can be
recovered. In \eqref{intro-system}, by contrast, $\alpha$ is an
independent unknown governed by a nonlinear differential equation.
Finite-energy solutions for algebraic closure systems were obtained in
\cite{BMZ19,NM20}, with some important first contributions for simpler systems developed in \cite{VaWeYu, FKNZ}, see also \cite{Wen} for transitions to single-phase states
\cite{Wen}; local strong solutions and weak--strong uniqueness were studied in
\cite{PZ1,LLPZ}.


While the Lions--Feireisl compactness theory for compressible
Navier--Stokes equations \cite{Lions,Fe2002} 
provides a natural benchmark for the multi-fluid models, it does not apply directly here. As the pressure depends on $(\alpha,R,Q)$ and the relaxation source
degenerates at the boundary of the phase space, the usual effective-viscous-flux route does not directly yield the compactness needed for a global weak solution.
More flexible compactness
methods cover certain nonstandard pressures, stresses, and source terms
\cite{BrJa,VZ17}, but do not directly control the simultaneous oscillations of
the three phase variables and the endpoint degeneration encountered here.

This motivates a dissipative measure-valued formulation. The underlying
ideas originate in the measure-valued theory of DiPerna \cite{DiPerna85} and
the relative-entropy method of Dafermos \cite{Dafermos79}; general
weak--strong uniqueness results include \cite{BDLS11,DST12}. For compressible
Navier--Stokes equations, the framework was developed in
\cite{FG2W,AFN21,C2020}. It records oscillations by a parametrized Young
measure and concentrations by energy-controlled defects, while weak--strong
uniqueness forces both to vanish on the lifespan of a regular solution.


\medskip
\noindent\textbf{Main results and novelty.}
The paper establishes a global existence and stability theory for
\eqref{intro-system}. To the best of our knowledge, this is the first such
theory for the viscous barotropic one-velocity Baer--Nunziato system with the
pressure-relaxation closure \eqref{intro-alpha}.

Our first result is the global-in-time existence of dissipative
measure-valued solutions for arbitrary finite-energy initial Young measures
and the full range $\gamma^\pm>1$. The Young measure acts simultaneously on
the volume fraction, the partial masses, the velocity, and its symmetric
gradient; the formulation retains the renormalized volume-fraction equation,
and the energy defect controls the concentration measure in the momentum
balance.

The construction uses a parabolic regularization of the three scalar
equations and a Faedo--Galerkin approximation of the momentum equation. The
limits are deliberately separated. For fixed Galerkin dimension, the
finite-dimensional regularity of the velocity and maximum-principle estimates
give strong compactness as the artificial diffusion tends to zero. Only then
is the Galerkin dimension sent to infinity. The separation of the volume
fraction from $0$ and $1$ is available at the first limit but is not uniform
in the Galerkin dimension. To identify the relaxation source in the final
Young-measure limit, we exploit the exact relation
\[
 |\omega(\alpha,R,Q)|^2
 =
 \frac{\alpha(1-\alpha)}{\lambda+2\mu}\,
 \mathcal R(\alpha,R,Q),
\]
where $\mathcal R$ denotes the quantity in
\eqref{intro-relaxation-dissipation}. A cutoff supported away from the
endpoints makes the source continuous on the phase space, while the
relaxation dissipation shows that the contribution of the endpoint layers
vanishes uniformly. This identifies the source without imposing a uniform
lower bound on $\alpha$ or $1-\alpha$. A similar endpoint analysis is required to prove positivity of the concentration defects in the limiting energy inequality.

Our second main result is dissipative measure-valued--strong uniqueness.
First we observe that the existence of local-in-time strong-solution in
maximal-$L^p$--$L^q$ spaces can be obtained by adapting the strategy of \cite{PZ1}. Apart from regularity, these solutions are characterized by
positivity of the partial masses and strict separation of the strong volume
fraction from the endpoints.
We use them as comparison class in the relative energy functional. The
standard thermodynamic part of the functional controls the phase-density
perturbations but does not, by itself, provide the control of the
independent volume fraction needed for \eqref{intro-alpha}. We therefore
augment it by
\[
 \frac12|\alpha-\beta|^2,
\]
where $\beta$ is the strong volume fraction. This requires to work with the renormalized identity for the
volume-fraction equation and directly impacts the order of limit passages in our approximation procedure.
The augmented relative energy and the concentration defect are ultimately controlled by  Gronwall's inequality. Consequently, a dissipative
measure-valued solution emanating from strong initial data is a Dirac mass
concentrated on the strong solution throughout its lifespan.

Besides uniqueness, this stability result has at least two useful consequences.
Firstly, it may supply a convergence criterion for consistent
energy-stable approximations that generate dissipative measure-valued
solutions, in the spirit of the numerical measure-valued framework
\cite{FMT2016,FLM18,FLMS19,FLMSBook}. The additional control of the independent
volume fraction may also be useful in low-Mach limits with differential
closure; related compressible and algebraically closed two-fluid limits are
studied in \cite{FKNZ,LebotSurvey,Lebot26,LLZ26}.

\medskip
\noindent\textbf{Organization of the paper.}
In Section~\ref{Sec:2} we formulate the pressure-relaxation system and its energy
structure. Section~3 introduces dissipative measure-valued solutions, states
the global existence and weak--strong uniqueness theorems, and gives the
local strong-solution result. In Section~4 we construct the measure-valued
solution through the separated approximation limits. In
Section~\ref{Sec:MV-WS} we derive the relative-energy inequality, including
the additional identity for the volume fraction. Section~\ref{Sec:5} closes
the stability estimates and proves weak--strong uniqueness.

\section{Formulation of the problem}\label{Sec:2}

Recall that with the partial masses $R=\alpha\vrp$ and $Q=(1-\alpha)\vrm$ 
introduced in Section~\ref{Sec:introduction}, the system takes the form
\begin{subequations}\label{S}
\begin{align}
  &\pt R+ \Div(R\vu )=0,\label{SR}\\ 
  &\pt Q + \Div(Q\vu )=0 ,\label{ST}\\
&\ptb{(R+Q)\vu}+\Div((R+Q)\vu\otimes\vu)-\Div \vS(\Grad\vu) \nonumber\\
&\qquad\qquad\qquad  +\Grad \lr{\alpha p_{+}\lr{\frac{R}{\alpha}}+(1-\alpha)p_{-}\lr{\frac{Q}{1-\alpha}}}=\vc{0},\label{Su}\\
&\pt\alpha+\vu\cdot\Grad\alpha=\frac{\alpha(1-\alpha)}{\lambda+2\mu}\lr{p_{+}\lr{\frac{R}{\alpha}}-p_{-}\lr{\frac{Q}{1-\alpha}}},\label{Salpha}
\end{align}
\end{subequations} 
where the new unknowns are $R,Q,\vu,$ and $\alpha$.

We supplement the system  \eqref{S} with the following set of initial conditions:
\begin{equation} \label{eq1.3bi}
\begin{aligned}
R(0,x) &= \alpha _0\vr_{+,0}(x)=:R_0(x), \\
Q(0,x) &= (1-\alpha _0) \vr_{-,0}(x)=:Q_0(x), \\
(R+Q)\vu(0,x)&= 
(\alpha _0\vr_{+,0}+(1-\alpha _0)\vr_{-,0})\vu_0(x)=:\vc m_0,\\
\alpha(0,x)&=\alpha_0(x).
\end{aligned}
\end{equation}

The system is endowed  with the zero Dirichlet boundary condition
\begin{equation} \label{eq1.2bi}
\vu(t,x)\Big|_{\partial\Omega} = \vc{0},
\end{equation}
where $\Omega$ is a sufficiently smooth bounded domain of $\R^3$ and $T>0$ is arbitrary large. We will further denote $I=(0,T)$.

For convenience, we consider the Newtonian stress tensor in the following form
\eq{
    \vS(\nabla \vu):= &\mu \lr{{\nabla \vu+ \nabla^{T} \vu}-\frac{2}{3} \Div \vu \; \mathbb{I} } + \xi \div \vu \mathbb{I},
}
 where $\xi$ is related to the Lam\'e coefficient $\lambda$ through
 $2\mu+3\lambda=3\xi$.
 For any $\mathbb{M} \in \R^{3\times 3}$, we will denote 
  \[  \mathbb{T} (\mathbb{M}) = \frac{\mathbb{M}+ \mathbb{M}^{T}}{2}- \frac{1}{3}\tr \mathbb{M}\,\mathbb I.\]
 In particular, for $\mathbb{M}=\Grad\vu$, we have 
  \begin{align*}
     \mathbb{T} (\nabla \vu ) = \mathbb{D}\vu- \frac{1}{3} \tr \mathbb{D}\vu \mathbb{I}, \quad \text{where}\quad   \mathbb{D}\vu  = \frac{ \nabla \vu + \nabla^{T} \vu }{2}. 
  \end{align*}
  Therefore $ \mathbb{T} (\nabla \vu )$ depends only on the symmetric part of the gradient $\mathbb{D}\vu$ and so does $\vS$. With slight abuse of notation we will write
  $$\mathbb{T} (\nabla \vu )= \mathbb{T} (\mathbb{D}\vu ),\quad \vS(\Grad\vu)=\vS(\mathbb{D}\vu).$$
  Moreover, we have
\eqh{
\vS(\mathbb{D}\vu):\nabla\vu&= \vS(\mathbb{D}\vu):\mathbb{D}\vu\\
&=
\lr{2\mu \mathbb{T} (\mathbb{D}\vu) + \xi \tr \mathbb{D}\vu \mathbb{I}}:\mathbb{D}\vu
= 2\mu\,\bigl|\mathbb{T}(\mathbb{D}\vu )\bigr|^2
+\xi\,\bigl|\tr \mathbb{D}\vu \bigr|^2.
}
We will therefore make the following assumption on the viscosity coefficients 
\eqh{\mu>0\quad \text{and}  \quad \xi \geq 0.}
Equivalently, in the Lam\'e convention \eqref{stres:mu_lambda},
$\mu>0$ and $\lambda+\frac23\mu\geq0$. Notice also that
$\lambda+2\mu=\xi+\frac43\mu>0$, so the coefficient in the relaxation law is
well defined. The strengthened conformal Korn--Poincar\'e compatibility
condition below controls the trace fluctuation even when $\xi=0$; see
Remark~\ref{rmk:korn-choice}.


\section{Definitions and main results}
In this section we first carefully formulate the definitions of the measure-valued and strong solutions, and then we formulate our main results. 

\subsection{Dissipative measure-valued solutions solutions}
We consider  the following phase space for measure-valued solutions:
\eq{\label{phase}
\mathcal{F}= \left\{(s, r, q,\vv, \mathbb{D}_{\vv})  \mid s \in [0,1],\ r\in [0,\infty),\ q\in [0,\infty),\ \vv\in  \R^3,\ \mathbb{D}_{\vv} \in\R^{3\times 3}_{\text{sym}} \right\}.
}
\begin{df}\label{MVdef}
		We say that a parametrized measure $\{ \Nu_{t,x} \}_{(t,x)\in I\times \Omega}$,
	\begin{align*}
		\Nu \in L^{\infty}_{\text{weak-(*)}} \big( I\times \Omega ;\mathcal{P}(\mathcal{F} )\big),
	\end{align*}
	on the phase space $\mathcal{F}$ defined in \eqref{phase}
	is a measure--valued solution of the two-fluid  system \eqref{S} in $I\times \Omega$, with the boundary conditions \eqref{eq1.2bi} and the initial conditions \eqref{eq1.3bi} and with the dissipation defect $\mathcal{D}$,
	\begin{align*}
		\mathcal{D}\in L^{\infty}(0,T),\; \mathcal{D}\geq 0,
	\end{align*}
	if the following conditions hold:
	\begin{itemize}
		\item The unknowns enjoy the following regularity: 
		\begin{align}
			&  \alpha=\lel \Nu_{t,x}; s  \ril  \in L^\infty(I\times\Omega)\cap C_{weak}(I,L^2(\Omega)),\label{reg1}\\
			& 	R=\lel \Nu_{t,x}; r  \ril  \in C_{\text{weak}}(0,T;L^{\gamma^+}(\Omega)) ,\label{reg2r}\\
            & 	Q=\lel \Nu_{t,x}; q  \ril  \in C_{\text{weak}}(0,T;L^{\gamma^-}(\Omega)) ,\label{reg2q}\\
			&\vu=\lel \Nu_{t,x};  \vv \ril \in L^2(0,T;W^{1,2}(\Omega)).\label{reg4} 
		\end{align}	
In addition, the velocity and gradient variables are compatible with the no--slip
		boundary condition:  for every 
		$\mathbb M\in C^1((0,T)\times\Ov{\Omega};\R^{d\times d}_{\rm{sym}})$,
		\begin{align}\label{mv-gradient-compatibility}
		\intTO{ \lel\Nu_{t,x};\vv\ril\cdot
		\Div \mathbb M}
		=-\intTO{\lel\Nu_{t,x};\mathbb{D}_{\vv}\ril:
		\mathbb M}.
		\end{align}
		\item For a.e. $\tau \in (0,T) $ and $\psi \in C^{1}([0,T]\times {\Omega})$ we have
		\begin{align} \label{mv eqn 1r}
			\begin{split}
				&\int_{\Omega}R(\tau, \cdot) \psi(\tau, \cdot) \dx - \int_{ \Omega} R_0 \psi(0, \cdot) \dx \\
				&\qquad = \inttauO{ \left[ R  \partial_{t}\psi + \lel \Nu_{t,x};  r\vv\ril \cdot \Grad \psi \right] }.
			\end{split}
		\end{align}
        	\item For a.e. $\tau \in (0,T) $ and $\psi \in C^{1}([0,T]\times {\Omega})$ we have
		\begin{align} \label{mv eqn 1q}
			\begin{split}
				&\int_{\Omega}Q(\tau, \cdot) \psi(\tau, \cdot) \dx - \int_{ \Omega} Q_0 \psi(0, \cdot) \dx \\
				&\qquad = \inttauO{\left[ Q  \partial_{t}\psi + \lel \Nu_{t,x};  q\vv\ril \cdot \Grad \psi \right] }.
			\end{split}
		\end{align}
		\item There exists a concentration measure $r^{M} \in L^\infty_{\text{weak-(*)}}(0,T;\mathcal{M}(\overline{\Omega};\R^{d\times d})) + \mathcal{M}([0,T]\times \overline{\Omega};\R^{d\times d})$ and $\chi\in L^{1}(0,T)$ such that for a.e. $\tau \in (0,T)$, for all $\pmb{\phi}\in C^{1}([0,T]\times \overline{\Omega};\R^d)$ we have	
		\begin{align}\label{mv eqn 2}
			\begin{split}
				&\int_{ \Omega}  \lel\Nu_{\tau,x}; (r+q)\vv \ril \cdot \pmb{\phi}(\tau,\cdot) \dx - \int_{ \Omega} \lel \Nu_{0} ; (r+q)\vv  \ril \cdot \pmb{\phi}(0,\cdot) \dx\\
				&= \inttauO{\left[ \lel \Nu_{t,x};  (r+q)\vv  \ril \cdot \partial_{t} \pmb{\phi} +  \lel \Nu_{t,x};(r+q)\vv\otimes  \vv \ril : \Grad \pmb{\phi}+ \lel \Nu_{t,x}; p(s,r,q) \ril  \Div\pmb{\phi} \right] }\\
				& \qquad- \inttauO{\lel \Nu_{t,x}; \vS(\mathbb{D}_{\vv})\ril:  \Grad \pmb{\phi}}+ \lel r^M;\Grad \pmb{\phi}\ril_{\mathcal{M}([0,\tau]\times \overline{\Omega}), C([0,\tau]\times \overline{\Omega}),} 
			\end{split}
		\end{align}
moreover the following compatibility condition is satisfied.
		\begin{equation}\label{mom-def}
		\left\vert \lel r^M;\Grad \pmb{\phi}\ril_{\mathcal{M}([0,\tau]\times \overline{\Omega}), C([0,\tau]\times \overline{\Omega})} \right\vert \leq  \int_0^\tau \chi(t) \mathcal{D}(t) \Vert \pmb{\phi}(t,\cdot) \Vert_{C^1(\overline{\Omega})} \dt.
	\end{equation} 
    	\item For a.e. $\tau \in (0,T) $, for every
        $b\in C^1([0,1])$ and $\psi \in C^{1}([0,T]\times \overline{\Omega})$ we have
        \begin{align}\label{mv eqn 3}
        &\int_\Omega\lel\Nu_{\tau,x};b(s)\ril\psi(\tau)\,\dx
        -\int_\Omega\lel\Nu_{0,x};b(s)\ril\psi(0)\,\dx\nonumber\\
        &=\inttauO{\lel\Nu_{t,x};b(s)\ril\partial_t\psi
        +\lel\Nu_{t,x};b(s)\vv\ril\cdot\Grad\psi}\nonumber\\
        &\quad+\inttauO{\lel\Nu_{t,x};b(s) \,\tr \mathbb{D}_{\vv}
        +b'(s)\frac{s(1-s)}{\lambda+2\mu}
        \big(p_+(r/s)-p_-(q/(1-s))\big)\ril\psi}.
        \end{align}


		 \item 
        For a.e. $ \tau \in (0,T) $ we have
		\begin{align*}
			\begin{split}
				&\int_{ \Omega} \lel \Nu_{\tau ,x}; \frac{1}{2}  (r+q)|\vv|^2 + P(s,r,q) \ril \dx\\ 
&\qquad+\frac{1}{\lambda+2\mu}\inttauO{\lel \Nu_{t,x}; s(1-s)(p_{+}(r/s) -p_{-}(q/(1-s)))^2\ril }\\
       &\qquad+\inttauO{\lel \Nu_{t,x};\vS(\mathbb{D}_{\vv}): \mathbb{D}_{\vv}\ril}+ \mathcal{D}(\tau) \\
				&\leq \int_{ \Omega} \lel \Nu_{0,x};  \frac{1}{2}  (r+q)|\vv|^2 + P(s,r,q) \ril \dx, 
			\end{split}
		\end{align*}
        where we denoted
        \eq{\label{Psrq}
        P(s,r,q):=s \Hp (\np)+(1-s)\Hm(\nm),}
        \eq{\label{npnm}
        \np:=\frac{r}{s},\quad \nm:=\frac{q}{(1-s)},}
        and
\eq{\label{HpHm}
\Hp(\np):=\frac{1}{\gp-1}\np^\gp,\quad \Hm(\nm):=\frac{1}{\gm-1}\nm^\gm.
}
        
\item Moreover, for a.e. $\tau\in(0,T)$ the generalized trace-free
		Korn--Poincar\'e inequality
		\begin{align}\label{mv-poincare-korn}
		&\inttauO{
		\lel\Nu_{t,x};|\vv-\vw|^2
        +|\mathbb D_{\vv}-\mathbb D\vw|^2\ril}\leq C_{\rm PK}\inttauO{
		\lel\Nu_{t,x};|\mathbb{T}(\mathbb{D}_{\vv})-
		\mathbb{T}(\Grad \vw)|^2\ril},
		\end{align}
   holds for all $\vw\in L^2(0,T; W^{1,2}_0(\Omega;\R^3))$.
	\end{itemize}
	\end{df}
\begin{rmk}[Choice of the Korn--Poincar\'e compatibility condition]
\label{rmk:korn-choice}
We use the strengthened trace-free formulation \eqref{mv-poincare-korn}.
For zero-boundary vector fields the underlying deterministic estimate is the
$L^p$ conformal Korn inequality
\begin{align}\label{conformal-korn}
 \|\mathbf z\|_{W^{1,p}(\Omega)}
 \leq C_p\|\mathbb T(\nabla\mathbf z)\|_{L^p(\Omega)},
 \qquad \mathbf z\in W^{1,p}_0(\Omega;\R^3),\quad 1<p<\infty;
\end{align}
see \cite{Dain06,LewintanNeff21}. Lemma~2.2 of \cite{BFN} states the
Young-measure version with the velocity term on the left. The same proof,
applied to the full $W^{1,p}$ estimate \eqref{conformal-korn}, gives the
additional symmetric-gradient term in \eqref{mv-poincare-korn}, i.e. 
\[  \|\mathbf z\|_{L^{p}(\Omega)} + \|\mathbb{D} \mathbf z\|_{L^{p}(\Omega)} \leq  C_p\|\mathbb T(\nabla\mathbf z)\|_{L^p(\Omega)} .\]
This is the
formulation naturally controlled by the shear dissipation
$2\mu|\mathbb T(\mathbb D_{\vv})|^2$ and, unlike an estimate with the full
symmetric gradient on the right, it also controls the trace fluctuation when
$\xi=0$. Thus no strict positivity of the bulk viscosity is needed.
\end{rmk}

Our first main theorem concerns the global-in-time existence of measure-valued solutions
\begin{thm} \label{t:mv}
Assume for some
$\Nu_{0} \in L^{\infty}_{\text{weak-(*)}} \big( \Omega ;\mathcal{P}(\mathcal{F} )\big) $ that 
\eq{\label{MV:IE:1}
     &\int_{ \Omega} \lel \Nu_{0,x};  \frac{1}{2}  (r+q)|\vv|^2 + P(s,r,q) \ril \dx\leq C
}
and
\eq{\label{MV:IE:1a}\alpha_0 := \int_{ \Omega} \lel \Nu_{0,x};s\ril\dx \in [0,1].}
Then there exists a paramerized measure $\{ \Nu_{t,x} \}_{(t,x)\in I\times \Omega}$ and a dissipation defect ${\mathcal{D}}$ such that the couple $(\Nu_{t,x}, \mathcal{D})$ is a measure-valued solution to system \eqref{S} in the sense of Definition \ref{MVdef}.
\end{thm}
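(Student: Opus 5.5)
We construct the measure-valued solution as the Young-measure limit of a two-level approximation, following the strategy outlined in the introduction. The first step regularizes the initial data. From $\Nu_0$ with finite energy \eqref{MV:IE:1} we produce smooth data $(\alpha_{0,n},R_{0,n},Q_{0,n},\vu_{0,n})$ with $\alpha_{0,n}\in[\delta_n,1-\delta_n]$ and $R_{0,n},Q_{0,n}\ge\delta_n>0$. Their Young measures converge narrowly to $\Nu_0$ and their energies converge to the right-hand side of the energy inequality. This is done by convexity and lower semicontinuity of $P$ on $\mathcal F$ together with a standard approximation of Young measures by functions.

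The approximate problem is
\begin{itemize}
\item[] $\pt R+\Div(R\vu)=\ep\Delta R$ and $\pt Q+\Div(Q\vu)=\ep\Delta Q$;
\item[] $\pt\alpha+\vu\cdot\Grad\alpha=\ep\Delta\alpha+\omega(\alpha,R,Q)$;
\item[] the momentum equation \eqref{Su}, projected onto an $N$-dimensional Galerkin space $X_N\subset W^{1,2}_0$ and corrected by the usual $\ep\Grad(R+Q)\cdot\Grad\vu$ terms, so that an energy identity holds.
\end{itemize}
All scalar equations carry Neumann boundary conditions.

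For fixed $N$ and $\ep$, local solvability follows from a fixed point in $C([0,T];X_N)$. Since $\vu\in X_N$ is smooth with $\|\Div\vu\|_\infty\le C(N)\|\vu\|_{L^2}$, the maximum principle gives positive lower bounds for $R$ and $Q$. Because $\omega$ vanishes at $\alpha=0,1$ and is Lipschitz near those points once $R,Q$ are bounded, the maximum principle also yields $\alpha\in[\delta(N,T),1-\delta(N,T)]$ independent of $\ep$. The energy estimate, which contains the relaxation dissipation \eqref{intro-relaxation-dissipation}, then makes the solution global.

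We first send $\ep\to0$ with $N$ fixed. The bounds independent of $\ep$ on $\vu$ in $C^1$, the bounds on $R,Q,\alpha$ in $L^\infty$, and the separation of $\alpha$ from the endpoints allow compactness in $L^p$. For $R$ and $Q$ this comes from the transport structure with a smooth velocity; for $\alpha$ it comes from the Lipschitz source away from the endpoints, via a stability estimate for the transport equation. We obtain a solution of the Galerkin problem with $\ep=0$. It satisfies the renormalized $\alpha$-equation for every $b\in C^1([0,1])$, which is a genuine chain rule because $\alpha$ solves a transport equation with a smooth field. The energy inequality also holds, with the relaxation dissipation term retained.

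We then send $N\to\infty$. The energy bounds give $\vu_N$ bounded in $L^2W^{1,2}$, $R_N$ bounded in $L^\infty L^{\gp}$, $Q_N$ bounded in $L^\infty L^{\gm}$, and $\alpha_N$ bounded in $L^\infty$. We take the Young measure $\Nu$ generated by $(\alpha_N,R_N,Q_N,\vu_N,\mathbb D\vu_N)$ on $\mathcal F$. The linear terms pass to the limit directly, and \eqref{mv-gradient-compatibility} follows from the limit of integration by parts. The nonlinear terms $(r+q)\vv\otimes\vv$ and $p(s,r,q)$ are dominated by the energy, so they converge to their $\Nu$-averages plus concentration measures. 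Defining $\mathcal D(\tau)$ as the concentration part of the energy (energy limit minus the Young-measure energy) yields $r^M$ with \eqref{mom-def}, by the standard lemma that defects of $|\cdot|$-dominated nonlinearities are controlled by the energy defect, as in \cite{FG2W}. The terms $r\vv$ and $q\vv$ converge without defect, because $R\vu$ is bounded in $L^2L^{6\gp/(\gp+6)}$ with integrability strictly better than $L^1$. The Korn--Poincar\'e condition \eqref{mv-poincare-korn} follows by applying \eqref{conformal-korn} to $\vu_N-\vw$ and using weak lower semicontinuity of convex integrands, as in Lemma~2.2 of \cite{BFN}. The energy inequality uses lower semicontinuity of the dissipation terms, including the nonnegative convex-type integrand $s(1-s)(p_+-p_-)^2$, which is lower semicontinuous on $\mathcal F$ after extension by $+\infty$ or $0$ at the endpoints.

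The main obstacle is passing to the limit in the source term $b'(\alpha_N)\omega(\alpha_N,R_N,Q_N)$ in \eqref{mv eqn 3}, because $\omega$ is discontinuous at $s\in\{0,1\}$ and the separation of $\alpha_N$ from the endpoints is lost as $N\to\infty$. We fix a cutoff $\eta_k(s)$ equal to $1$ on $[2/k,1-2/k]$ and supported in $[1/k,1-1/k]$, and split $\omega=\eta_k\omega+(1-\eta_k)\omega$. The first piece is continuous on $\mathcal F$ and grows at most like the energy to the power $<1$ plus a quadratic-dissipation contribution. It is uniformly integrable because $|\eta_k\omega|^2\le C\,s(1-s)(p_+-p_-)^2$ is bounded in $L^1$ and $\eta_k\omega$ therefore belongs to $L^2$. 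Hence it converges to $\lel\Nu;\eta_k\omega\ril$ with no defect. For the endpoint layer we use the identity $|\omega|^2=\frac{s(1-s)}{\lambda+2\mu}\mathcal R$ together with Cauchy--Schwarz:
\[
\int\!\!\int|(1-\eta_k)\omega|\le \Big(\int\!\!\int \frac{(1-\eta_k)s(1-s)}{\lambda+2\mu}\Big)^{1/2}\Big(\int\!\!\int\mathcal R\Big)^{1/2}\le C k^{-1/2}.
\]
This bound is uniform in $N$ and also passes to $\Nu$. Letting $k\to\infty$ identifies the source, since on the limit side $\lel\Nu;|\omega|\ril$ is finite by the dissipation bound. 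The same splitting is what we would use to show $\mathcal D\ge0$, by checking that no negative defect arises from the endpoint regions of $P$.
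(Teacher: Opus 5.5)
Your architecture is the paper's: the same $\ep$-regularized scalar equations with a Galerkin momentum equation, $\ep\to0$ at fixed $N$ before $N\to\infty$, and a Young measure on $(s,r,q,\vv,\mathbb D_\vv)$. Your treatment of the source term is exactly the endpoint-cutoff argument announced in the paper's introduction. The $Ck^{-1/2}$ layer bound, uniform in $N$ and valid for $\Nu$, is correct and more explicit than the paper's own limit passage in \eqref{ren_n}.

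The step that fails is the separation of $\alpha$ at fixed $(\ep,N)$. There $R,Q\ge\underline m>0$, so
\[
\omega(\alpha,R,Q)=\frac{(1-\alpha)R^{\gp}\alpha^{1-\gp}-\alpha Q^{\gm}(1-\alpha)^{1-\gm}}{\lambda+2\mu}\longrightarrow+\infty\quad(\alpha\to0^+),
\]
and $\omega\to-\infty$ as $\alpha\to1^-$, because $\gamma^\pm>1$. Thus $\omega$ neither vanishes nor is Lipschitz at the endpoints. It vanishes only at the compatible points $(0,0,q)$, $(1,r,0)$ of the limiting phase space, which are never reached while $R,Q>0$. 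Hence the comparison argument you base on these properties is unavailable, and even solvability of the $\alpha$-equation is not covered by Lipschitz semilinear theory.

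The missing idea is that the singularity has the favourable sign. With $\underline m\le R,Q\le\overline M$, choose $\eta$ with $(\underline m/\eta)^{\gp}\ge(\overline M/(1-\eta))^{\gm}$ and $(\underline m/\eta)^{\gm}\ge(\overline M/(1-\eta))^{\gp}$. This gives $\omega(\eta,R,Q)\ge0\ge\omega(1-\eta,R,Q)$. Then truncate $\omega$ outside $[\delta,1-\delta]$ with $\delta\le\eta$, solve the resulting Lipschitz problem, and use the constants $\eta,1-\eta$ as barriers, so the truncation is never active (Lemma~\ref{lem:alpha}). This $\eta$ depends on $N$ through $\underline m,\overline M$ but not on $\ep$.

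Three further justifications need repair.

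(i) The bound $R_N\vu_N\in L^2L^{6\gp/(\gp+6)}$ has exponent $>1$ only for $\gp>6/5$, but the theorem covers all $\gamma^\pm>1$. Instead write $R_N\vu_N=\sqrt{R_N}\,(\sqrt{R_N}\vu_N)\in L^\infty L^{2\gp/(\gp+1)}$ using the kinetic energy, as in \eqref{mass-flux-bound}.

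(ii) For \eqref{mv-poincare-korn}, lower semicontinuity only handles the left-hand side. On the right, $\int|\mathbb T(\mathbb D\vu_N)-\mathbb T(\Grad\vw)|^2$ converges to its $\Nu$-average plus a nonnegative concentration defect. Passing to the limit therefore gives \eqref{mv-poincare-korn} only with an extra defect term on the right, which the definition does not allow. The device behind Lemma~2.2 of \cite{BFN}, used in the paper, is to apply the $L^\beta$ conformal Korn inequality with $1<\beta<2$. There both integrands are equi-integrable and pass to the limit exactly; one then lets $\beta\nearrow2$.

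(iii) Lemma~2.1 of \cite{FG2W} requires continuous integrands, whereas $P$ and $p$ are only lower semicontinuous on $\mathcal F$ (for instance $sH_+(r/s)$ is discontinuous at $s=r=0$). Reusing the cutoff to show $\mathcal D\ge0$, as you suggest, does not work: unlike $|\omega|^2$, the potential $P$ carries no factor that is small on the endpoint layers. What works is the lower-semicontinuous defect lemma (Lemma~\ref{lem:lsc-defect}), applied to $E$, to the two dissipation integrands, and to $p$ and $C_pP-p$ with $C_p=\max\{\gp-1,\gm-1\}$. It yields $\mathcal E_\infty\ge0$ and $0\le\overline p-\lel\Nu;p\ril\le C_p\,\mathcal E_\infty$, hence \eqref{mom-def}. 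With that in place, your choice $\mathcal D(\tau)=\mathcal E_\infty(\tau)(\overline\Omega)$ is also admissible, since the omitted dissipation defects are nonnegative.
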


\subsection{Local-in-time strong solutions}

Based on the result in \cite{PZ1} we can prove the following

\begin{thm} \label{t:local}
Let $\Omega\subset\R^3$ be a bounded uniform $C^2$ domain and let
\begin{align*}
 2<p<\infty,\qquad 3<q<\infty,\qquad
 \frac{2}{p}+\frac{3}{q}<1.
\end{align*}
Assume that
\begin{align*}
 &(R_0,Q_0,\alpha_0)\in W^{1,q}(\Omega)^3,\qquad
 \vu_0\in B^{2-2/p}_{q,p}(\Omega;\R^3),
 \qquad \vu_0|_{\partial\Omega}=\vc 0,
\end{align*}
and that, for some $\underline r>0$, $\underline a\in(0,1/2)$ and
$L>0$,
\begin{align}
 &R_0\geq \underline r,\qquad Q_0\geq \underline r,
 \qquad \underline a\leq\alpha_0\leq1-\underline a
 \quad\text{in }\overline\Omega,                                      \label{strong-data-bounds}\\
 &\|R_0\|_{W^{1,q}(\Omega)}+\|Q_0\|_{W^{1,q}(\Omega)}+\|\alpha_0\|_{W^{1,q}(\Omega)}
 +\|\vu_0\|_{B^{2-2/p}_{q,p}(\Omega)}\leq L.                                 \label{strong-data-norm}
\end{align}
Then there exists $T=T(L,\underline r,\underline a)>0$ such that
\eqref{S}, supplemented with \eqref{eq1.2bi} and \eqref{eq1.3bi},
admits a unique strong solution satisfying
\begin{align}
 \vu&\in W^{1,p}(0,T;L^q(\Omega;\R^3))
 \cap L^p(0,T;W^{2,q}(\Omega;\R^3)\cap W^{1,q}_0(\Omega;\R^3)),
                                                                    \label{strong-u-reg}\\
 (R,Q,\alpha)&\in W^{1,p}(0,T;W^{1,q}(\Omega)^3).                  \label{strong-rqa-reg}
\end{align}
Moreover, $T$ may be chosen so that
\begin{align}
 R,Q\geq\frac{\underline r}{2},\qquad
 \frac{\underline a}{2}\leq\alpha\leq1-\frac{\underline a}{2},
 \qquad
 \int_0^T\|\nabla\vu(t)\|_{L^\infty(\Omega)}\,\dt\leq\delta,     \label{strong-positive}
\end{align}
where $\delta>0$ is any fixed sufficiently small constant.  The norm of the
solution in the spaces \eqref{strong-u-reg}--\eqref{strong-rqa-reg} is bounded
by a constant depending only on $L$, $\underline r$, $\underline a$, the
domain, the viscosity coefficients and the pressure laws.
\end{thm}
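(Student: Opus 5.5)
We argue by linearization around the data and a Banach fixed point in a small time interval, following the strategy of \cite{PZ1}. Because $(R,Q,\alpha)$ is transported by the same velocity, we pass to Lagrangian coordinates $X_\vu(t,y)=y+\int_0^t\vu(s,X_\vu(s,y))\,{\rm d}s$. The condition $\int_0^T\|\nabla\vu\|_{L^\infty}\,\dt\leq\delta$ keeps $X_\vu$ a $C^1$-diffeomorphism close to the identity. In these coordinates the mass equations \eqref{SR}--\eqref{ST} become ODEs,
\[
R(t)=R_0\exp\Big(-\int_0^t\Div\vu\Big),\qquad Q(t)=Q_0\exp\Big(-\int_0^t\Div\vu\Big),
\]
and \eqref{Salpha} becomes the pointwise ODE $\dot\alpha=\omega(\alpha,R,Q)$. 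Since $\alpha_0\in[\underline a,1-\underline a]$ and $R,Q\geq\underline r/2$, the map $\omega$ is smooth (indeed locally Lipschitz with all derivatives bounded) on the relevant compact set of states. Hence $\alpha$ is determined by a smooth flow, and $W^{1,q}$ regularity in space follows by differentiating the ODE in $y$. Because $q>3$, $W^{1,q}$ is an algebra embedded in $L^\infty$, so all compositions are controlled.

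Concretely, we fix $\bar\vu$ in the ball $B_{T,M}$ of the maximal-regularity space \eqref{strong-u-reg} with $\bar\vu(0)=\vu_0$ and radius $M$. From it we build $(R,Q,\alpha)$ as above, and the following estimates hold:
\[
\|(R,Q,\alpha)\|_{W^{1,p}(0,T;W^{1,q})}\leq C(L,\underline r,\underline a,M),
\qquad
\|(R,Q,\alpha)-(R_0,Q_0,\alpha_0)\|_{L^\infty}\leq C(M)\,T^{1-1/p}.
\]
These follow from $\int_0^T\|\nabla\bar\vu\|_{W^{1,q}}\leq T^{1-1/p}M$ and the embedding $W^{1,q}\hookrightarrow L^\infty$. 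For $T$ small this also gives the bounds \eqref{strong-positive}. We then solve the linear Lamé problem
\[
\varrho_0\pt\vu-\Div\vS(\mathbb D\vu)=F,\qquad \vu|_{\partial\Omega}=0,\qquad \vu(0)=\vu_0,
\]
where $\varrho_0=R_0+Q_0\geq 2\underline r$ and
\[
F=(\varrho_0-\varrho)\pt\bar\vu-\varrho\bar\vu\cdot\nabla\bar\vu-\nabla p(\alpha,R,Q).
\]
Maximal $L^p$--$L^q$ regularity for this parabolic Lamé system with variable but positive, $W^{1,q}$ coefficient $\varrho_0$ is classical on uniform $C^2$ domains. We need $\nabla p\in L^p(0,T;L^q)$; this holds because $p$ is smooth in $(\alpha,R,Q)$ on the admissible set, with norm at most $CT^{1/p}$. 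The remaining terms are small: the first by the $L^\infty$ smallness of $\varrho-\varrho_0$, the second by $\|\bar\vu\|_{L^\infty_tL^\infty}\leq C(L)+CM T^\theta$ together with Hölder in time. The condition $2/p+3/q<1$ yields $B^{2-2/p}_{q,p}\hookrightarrow W^{1,\infty}$, so $\nabla\vu$ is bounded in time uniformly.

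For $T$ small, the solution map sends $B_{T,M}$ into itself once $M$ is chosen larger than the data-driven constant. Contraction is proved in the same norm by taking differences: the Lipschitz dependence of $(R,Q,\alpha)$ on $\bar\vu$ in $L^\infty(0,T;W^{1,q})$ comes from Grönwall applied to the differentiated ODEs. The main obstacle is the estimate of $\nabla\alpha$: differentiating $\dot\alpha=\omega$ gives
\[
\frac{\rm d}{\dt}\nabla\alpha=\partial_\alpha\omega\,\nabla\alpha+\partial_R\omega\,\nabla R+\partial_Q\omega\,\nabla Q,
\]
which is linear in the gradients and therefore closes by Grönwall, but we must track the change of variables $\nabla_x=(\nabla_yX)^{-T}\nabla_y$ to keep $W^{1,q}$ rather than a weaker norm. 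The smallness of $\delta$ makes $\nabla_yX$ invertible with bounded inverse. Uniqueness in the stated class follows by the same difference estimate, and the dependence of $T$ only on $(L,\underline r,\underline a)$ is read off from the explicit smallness conditions.
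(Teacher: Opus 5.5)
There is a genuine gap in the contraction step. It comes from running the fixed point on the \emph{Eulerian} velocity. Your map is $\bar\vu\mapsto\vu$, where $\vu$ solves the Eulerian Lam\'e problem with forcing $F=(\varrho_0-\varrho)\pt\bar\vu-\varrho\bar\vu\cdot\nabla\bar\vu-\nabla p(\alpha,R,Q)$. A contraction in the maximal-regularity norm therefore needs $\|\nabla(R_1-R_2)\|_{L^p(0,T;L^q)}$, and the same for $Q$ and $\alpha$, to be bounded by $\|\bar\vu_1-\bar\vu_2\|$. That bound fails.

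\begin{itemize}
\item The Eulerian fields are compositions $\widetilde R_i\circ X_i^{-1}$ of the Lagrangian profiles with two different flows. The difference $(\nabla_y\widetilde R_2)\circ X_1^{-1}-(\nabla_y\widetilde R_2)\circ X_2^{-1}$ is not controlled by $X_1-X_2$ when $\nabla_y\widetilde R_2$ is only in $L^q$.
\item Equivalently, in $\pt\delta R+\bar\vu_1\cdot\nabla\delta R+\delta R\,\Div\bar\vu_1=-\delta\bar\vu\cdot\nabla R_2-R_2\Div\delta\bar\vu$ you can estimate $\delta R$ in $L^q$, but not $\nabla\delta R$, which would need $\nabla^2R_2$.
\item The same loss is already in your ODE representation. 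The factor $R_0\exp\bigl(-\int_0^t(\Div\bar\vu)(s,X_{\bar\vu}(s,y))\,{\rm d}s\bigr)$ contains the composition $(\Div\bar\vu)\circ X_{\bar\vu}$. Its $W^{1,q}$-Lipschitz dependence on $\bar\vu$ would require $\nabla^3\bar\vu$.
\end{itemize}

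Tracking the factor $(\nabla_yX)^{-T}$ does not help: the obstruction is evaluation at different points, not the chain rule. Relatedly, the bound in $W^{1,p}(0,T;W^{1,q})$ is only a Lagrangian statement. For the Eulerian fields, $\pt R=-\vu\cdot\nabla R-R\Div\vu$ lies only in $L^p(0,T;L^q)$. So ``contraction in the same norm'', and the uniqueness argument built on it, are not justified as written. Your self-map bounds are essentially fine.

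The paper's remedy is to take the \emph{Lagrangian} velocity $\vv(t,y)$ as the unknown, with $X_\vv(t,y)=y+\int_0^t\vv(s,y)\,{\rm d}s$.
\begin{itemize}
\item The partial masses are then $R_0/J_\vv$ and $Q_0/J_\vv$ as in \eqref{lagrangian-rs}, and the volume fraction solves the pointwise ODE \eqref{lagrangian-alpha}. These depend on $\vv$ only through $\int_0^t\nabla_y\vv$, smoothly and with no composition.
\item Differentiating in $y$ and applying Gr\"onwall then gives Lipschitz bounds in $W^{1,q}$ with a factor $T^{1/p'}$. This is your treatment of the $\alpha$-ODE, which matches the paper's new lemma.
\item Equally essential, the momentum equation is transformed too, into \eqref{lagrangian-momentum}. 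There is no convective term, and the weight is exactly $m_0=R_0+Q_0$. The remainders $\Div_y[J_\vv\vS(\nabla_y\vv A_\vv)A_\vv^T-\vS(\nabla_y\vv)]$ and $J_\vv A_\vv^T\nabla_y\Pi$ are Lipschitz in $\vv$ in $L^p(0,T;L^q)$ with a factor $T^\theta$. This follows from $\|A_\vv-\mathbb I\|+\|J_\vv-1\|\le CT^{1/p'}\|\vv\|_{L^p(0,T;W^{2,q})}$.
\end{itemize}
This is what lets Banach's theorem close in the maximal-regularity norm.

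If you prefer to stay Eulerian, you must instead combine uniform bounds in the strong norm with a contraction only in a weaker norm. Examples are the energy norm for $\vu$ and $L^\infty(0,T;L^2)$ for $(R,Q,\alpha)$. You then use weak closedness of the strong ball. Your proposal does neither.
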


\begin{rmk}[Lagrangian proof strategy]\label{rmk:strong-proof}
Theorem \ref{t:local} is obtained by adapting the Lagrangian
$L^p$--$L^q$ maximal-regularity argument from \cite{PZ1}; it is not a direct
consequence of the theorem proved there, because the closure in \cite{PZ1} is
algebraic, whereas \eqref{Salpha} is an additional evolution equation.  We
indicate the new step.  The pressure-relaxation structure is also studied in
\cite{Burtea}, but there the well-posedness theory is developed in the whole
space near an equilibrium and does not directly cover the present
initial-boundary value problem.

For a trial Lagrangian velocity $\vv$, set
\begin{align*}
 X_{\vv}(t,y)=y+\int_0^t\vv(s,y)\,\mathrm ds,\qquad
 F_{\vv}=\nabla_yX_{\vv},\qquad A_{\vv}=F_{\vv}^{-1},
 \qquad J_{\vv}=\det F_{\vv}.
\end{align*}
If $r=R\circ X_{\vv}$, $s=Q\circ X_{\vv}$ and
$a=\alpha\circ X_{\vv}$, the two continuity equations give
\begin{align}
 r=\frac{R_0}{J_{\vv}},\qquad s=\frac{Q_0}{J_{\vv}},                \label{lagrangian-rs}
\end{align}
while the PDE closure becomes the pointwise ODE
\begin{align}
 \partial_ta
 =\frac{a(1-a)}{\lambda+2\mu}
 \left[
 p_+\left(\frac{R_0}{aJ_{\vv}}\right)
 -p_-\left(\frac{Q_0}{(1-a)J_{\vv}}\right)
 \right],\qquad a|_{t=0}=\alpha_0.                                 \label{lagrangian-alpha}
\end{align}
On the compact state set determined by \eqref{strong-data-bounds}, the
right-hand side of \eqref{lagrangian-alpha} is a $C^2$ Nemytskii map on the
Banach algebra $W^{1,q}(\Omega)$.  Differentiating
\eqref{lagrangian-alpha} in $y$, followed by Gronwall's inequality, gives
$a\in W^{1,p}(0,T;W^{1,q})$ and the Lipschitz dependence of $a$ on $\vv$.
This is the additional lemma needed for the PDE closure.

Writing
\begin{align*}
 \Pi(r,s,a)=a p_+\left(\frac r a\right)
 +(1-a)p_-\left(\frac{s}{1-a}\right),\qquad m_0=R_0+Q_0,
\end{align*}
the Lagrangian momentum equation is
\begin{align}
 &m_0\partial_t\vv
 -\Div_y\left[J_{\vv}\vS(\nabla_y\vv A_{\vv})A_{\vv}^{T}\right]\nonumber\\
 &\hspace{25mm}
 +J_{\vv}A_{\vv}^{T}\nabla_y\Pi(r,s,a)=0.                         \label{lagrangian-momentum}
\end{align}
Its principal part at $A_{\vv}=\mathbb I$, $J_{\vv}=1$ is the
weighted Lam\'e problem
\begin{align*}
 m_0\partial_t\vv-\mu\Delta\vv-(\mu+\lambda)\nabla\Div\vv=\vc f,
 \qquad \vv|_{\partial\Omega}=\vc0.
\end{align*}
The required maximal regularity follows from the linear theory in
\cite{EnomotoShibata,PSZmax}; the fixed-point organization and nonlinear
estimates follow \cite{PZ1}.  Indeed,
\begin{align*}
 \|A_{\vv}-\mathbb I\|_{L^\infty(0,T;W^{1,q}(\Omega))}
 +\|J_{\vv}-1\|_{L^\infty(0,T;W^{1,q}(\Omega))}
 \leq C T^{1/p'}\|\vv\|_{L^p(0,T;W^{2,q}(\Omega))}.
\end{align*}
Together with the Lipschitz estimate for \eqref{lagrangian-alpha}, this
produces a factor $T^\theta$, $\theta>0$, in the difference of the nonlinear
right-hand sides of \eqref{lagrangian-momentum}.  Banach's fixed-point
theorem then proves local existence, uniqueness and continuous dependence.
Notice that the full norms in \eqref{strong-data-norm}, rather than only the
norms of the gradients, are needed to control the pressure and the lifespan.
\end{rmk}


\begin{rmk}[Regularity used in the relative-energy argument]\label{r:stronger_reg}
The regularity from \eqref{strong-u-reg} implies
$\vu\in L^1(0,T;W^{1,\infty}(\Omega))$, since
$W^{2,q}\hookrightarrow W^{1,\infty}$ for $q>3$.  It does not, by itself,
imply
\begin{align*}
 \partial_t\vu+\vu\cdot\nabla\vu\in L^1(0,T;L^\infty(\Omega)),
 \qquad (R,Q,\alpha)\in L^\infty(0,T;W^{1,\infty}(\Omega)).
\end{align*}
Because these stronger bounds are required below, in the relative-entropy argument, one should assume one further
spatial derivative in the initial data and the corresponding boundary
compatibility conditions, and run the differentiated maximal-regularity
argument to obtain
\begin{align*}
 \vu&\in W^{1,p}(0,T;W^{1,q}(\Omega))\cap L^p(0,T;W^{3,q}(\Omega)),\\
 (R,Q,\alpha)&\in W^{1,p}(0,T;W^{2,q}(\Omega)).
\end{align*}
The embedding $W^{2,q}\hookrightarrow W^{1,\infty}$ then yields the bounds
used in the relative-energy estimates. This result can be anticipated, however, we need stronger assumptions on the domain ($\Omega \in C^3$) and initial data ($(R_0,Q_0,\alpha_0) \in (W^{2,q}(\Omega))^3$, $\vu_0 \in B^{3-2/p}_{q,p}(\Omega)$) including certain compatibility condition for the data. 
\end{rmk}

\subsection{Weak-strong uniqueness principle}

The second main result of this paper reads as follows

\begin{thm}\label{thm_entropy}
    Let $(\Nu_{t,x}, \mathcal{D})$ be a measure-valued solution to system \eqref{S}, with the boundary conditions \eqref{eq1.2bi} and the initial conditions \eqref{eq1.3bi} obtained i Theorem \eqref{t:mv}. Let $(\beta , \BR,\BQ,\vw)$ be the strong solution  emanating from the same initial data as in Remark \ref{r:stronger_reg}. Then the two solutions coincide on the time interval $(0,T)$ of a lifespan of the regular solution.
\end{thm}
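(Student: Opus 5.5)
The proof uses the relative energy method with an augmented functional. For the strong solution $(\beta,\BR,\BQ,\vw)$ from Theorem~\ref{t:local} and Remark~\ref{r:stronger_reg} we set $\tilde n_+=\BR/\beta$, $\tilde n_-=\BQ/(1-\beta)$ and define
\begin{align*}
\mathcal E(\tau)=\int_\Omega\Big\langle\Nu_{\tau,x};\ &\tfrac12(r+q)|\vv-\vw|^2+P(s,r,q)-\partial_sP(\beta,\BR,\BQ)(s-\beta)\\
&-\partial_rP(\beta,\BR,\BQ)(r-\BR)-\partial_qP(\beta,\BR,\BQ)(q-\BQ)-P(\beta,\BR,\BQ)+\tfrac12|s-\beta|^2\Big\rangle\dx+\mathcal D(\tau).
\end{align*}
The target is a Gronwall inequality $\mathcal E(\tau)\le C\int_0^\tau\mathcal E\,\dt$ with $\mathcal E(0)=0$. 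We then read off that $\Nu_{t,x}=\delta_{(\beta,\BR,\BQ,\vw,\mathbb D\vw)}$ and $\mathcal D=0$. The Dirac identification in the $\mathbb D_{\vv}$ component comes from \eqref{mv-poincare-korn}, applied with the strong velocity $\vw$ once the viscous relative term vanishes.

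\textbf{Deriving the relative energy inequality.}
Start from the energy inequality of Definition~\ref{MVdef}. Test the momentum equation \eqref{mv eqn 2} with $\pmb\phi=\vw$, and the mass equations \eqref{mv eqn 1r}--\eqref{mv eqn 1q} with $\frac12|\vw|^2-\partial_rP(\beta,\BR,\BQ)$ and $\frac12|\vw|^2-\partial_qP(\beta,\BR,\BQ)$ respectively. For the volume fraction, use the renormalized equation \eqref{mv eqn 3} with the time- and space-dependent test functions $b(s)=s$ weighted by $\psi=-\partial_sP(\beta,\cdot)-\beta$ and $b(s)=s^2/2$. The choice $b(s)=s^2/2$ is exactly where the renormalized form is needed, since $\langle\Nu;s^2\rangle\neq\alpha^2$. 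The weighted version follows from \eqref{mv eqn 3} by density in $\psi$.

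Next subtract the strong equations tested in the same way. Here we use $\partial_sP=-(p_+(n_+)-p_-(n_-))$, so that $\partial_s P$ is exactly the pressure gap appearing in $\omega$. The resulting inequality contains several groups of terms:
\begin{itemize}
\item the viscous relative dissipation $\langle\Nu;(\vS(\mathbb D_{\vv})-\vS(\mathbb D\vw)):(\mathbb D_{\vv}-\mathbb D\vw)\rangle$;
\item the relaxation terms;
\item the convective term $\langle\Nu;(r+q)(\vv-\vw)\otimes(\vv-\vw)\rangle:\nabla\vw$;
\item the relative pressure $\langle\Nu;p(s,r,q)-\nabla P\cdot(\ldots)-p(\beta,\BR,\BQ)\rangle\,\div\vw$;
\item a term $\langle\Nu;(r+q-\BR-\BQ)\rangle$ times $(\partial_t\vw+\vw\cdot\nabla\vw)$, handled using the strong momentum equation;
\item the concentration term $\langle r^M;\nabla\vw\rangle$.
\end{itemize}
The last one is bounded via \eqref{mom-def} by $\chi\mathcal D\|\vw\|_{C^1}$. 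Depending on how the time-integrability of $\chi$ and of $\|\vw\|_{C^1}$ can be combined, this may require the weight $\|\vw\|_{C^1}\in L^\infty$ that Remark~\ref{r:stronger_reg} provides.

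\textbf{Controlling the terms.}
The convective and relative pressure terms are bounded by $\|\nabla\vw\|_{L^\infty}\mathcal E$. The key input is coercivity: because $P$ is convex as a perspective-type function of $(s,r,q)$, and $\beta,\BR,\BQ$ are bounded away from degeneracy by \eqref{strong-positive}, the relative pressure is dominated by the Bregman divergence of $P$. The quadratic term $\frac12|s-\beta|^2$ supplies the missing control in the direction where $P$ is degenerate (not strictly convex). Terms that are linear in the perturbation cancel by the strong equations. Terms of the form $\langle\Nu;(s-\beta)(\vv-\vw)\rangle\cdot\nabla\beta$ and $(r-\BR)(\vv-\vw)$ are split by Young's inequality into $\epsilon\langle\Nu;|\vv-\vw|^2\rangle+C_\epsilon\mathcal E$. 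The factor $\epsilon\langle\Nu;|\vv-\vw|^2\rangle$ is absorbed by the viscous dissipation through \eqref{mv-poincare-korn}. On the region $\{r+q\ \text{large}\}$ one instead uses $r|\vv-\vw|^2\le$ the kinetic part.

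\textbf{Main obstacle.}
The hard step is the relaxation source comparison: we need
\[
\frac{1}{\lambda+2\mu}\Big\langle\Nu;\,s(1-s)g^2-2(\ldots)\Big\rangle
\]
to be nonnegative up to $C\mathcal E$, where $g=p_+(r/s)-p_-(q/(1-s))$ and $\tilde g$ is the corresponding strong pressure gap. After collecting, the relaxation contributions become
\[
\langle\Nu;\,s(1-s)(g-\tilde g)^2\rangle+\langle\Nu;(s(1-s)-\beta(1-\beta))\,\tilde g\,g\rangle-\langle\Nu;(s-\beta)\,s(1-s)\,g\rangle+\ldots
\]
The first term is a good sign. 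The cross terms are handled with the inequality $|s(1-s)g|\le\sqrt{s(1-s)}\,\sqrt{s(1-s)g^2}$ and Young's inequality. This absorbs them into half the good term plus $C|s-\beta|^2$, with $C$ depending on $\|\tilde g\|_\infty$, which is finite by \eqref{strong-positive}. The delicate point is that near $s\in\{0,1\}$ the quantity $g$ is unbounded and not controlled by the Bregman divergence. I will first try to handle this entirely through the dissipative square $s(1-s)g^2$, so that no lower bound on $s$ is needed, with the augmented $|s-\beta|^2$ absorbing the remainder. If a residual $\langle\Nu;|s-\beta|\,|g-\tilde g|\,s(1-s)\rangle$ survives, it is split in the same way. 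Gronwall's inequality then closes the argument, using $\mathcal E(0)=0$ because the initial data coincide.
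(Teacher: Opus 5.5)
Your overall route is the paper's. The Bregman divergence of $P$ in $(s,r,q)$ coincides with the phase-wise functional in \eqref{mv_rel_entr_b}, since $\partial_rP=H_+'(\np)$, $\partial_qP=H_-'(\nm)$ and $\partial_sP=-(p_+(\np)-p_-(\nm))$. The augmentation by $|s-\beta|^2$ through the renormalized equation, the test functions, the Korn--Poincar\'e absorption and the Gronwall closure are also the same.

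The gap is in the step you flag as the main obstacle. Write $g=p_+(\np)-p_-(\nm)$ and $\tilde g=p_+(\bvrp)-p_-(\bvrm)$.

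\begin{itemize}
\item The cross term $\lel\Nu;(s(1-s)-\beta(1-\beta))\tilde g\,g\ril$ that you list could not be controlled. As $s\to0$ its weight tends to $-\beta(1-\beta)\neq0$, so $g\sim(r/s)^{\gamma^+}$ enters with no factor $s$. Neither the square nor the Bregman part (which controls only $sH_+(r/s)$) bounds it. Its $\tilde g^2$-part $(s-\beta)(1-s-\beta)\tilde g^2$ is, moreover, only linear in $s-\beta$.
\item Your Young step produces $\delta\,s(1-s)g^2+C_\delta|s-\beta|^2$. The good term available after completing the square is $s(1-s)(g-\tilde g)^2$, not $s(1-s)g^2$. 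Since $s(1-s)g^2\le 2s(1-s)(g-\tilde g)^2+2s(1-s)\tilde g^2$, an $O(1)$ remainder $2\delta\,s(1-s)\tilde g^2$ survives. It is not $O(\mathcal E)$, so Gronwall cannot give $\mathcal E\equiv0$.
\end{itemize}

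What is missing is the exact cancellation that removes every unweighted or merely linear occurrence of $g$. After completing the square, the non-augmented part leaves $-\frac{\tilde g}{\lambda+2\mu}\lel\Nu;s(1-s)(g-\tilde g)\ril$. The test functions $H_\pm'(\tilde\vr_\pm)$ bring in the strong relations $\pt\bvrp+\div(\bvrp\vw)=-\frac{(1-\beta)\tilde g}{\lambda+2\mu}\bvrp$ and $\pt\bvrm+\div(\bvrm\vw)=\frac{\beta\tilde g}{\lambda+2\mu}\bvrm$. These produce $\frac{\tilde g}{\lambda+2\mu}\big[(1-\beta)p_+'(\bvrp)\lel\Nu;s(\np-\bvrp)\ril-\beta p_-'(\bvrm)\lel\Nu;(1-s)(\nm-\bvrm)\ril\big]$. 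Together these contributions equal
\[
\frac{\tilde g}{\lambda+2\mu}\lel\Nu;-s(1-s)[p_+]_{\rm rel}+s(1-s)[p_-]_{\rm rel}+(s-\beta)\big(s\,p_+'(\bvrp)(\np-\bvrp)+(1-s)\,p_-'(\bvrm)(\nm-\bvrm)\big)\ril,
\]
where $[p_\pm]_{\rm rel}:=p_\pm(n_\pm)-p_\pm(\tilde\vr_\pm)-p_\pm'(\tilde\vr_\pm)(n_\pm-\tilde\vr_\pm)$; these are the paper's $J_8$--$J_{11}$. Every unbounded quantity now carries the weight $s$ or $1-s$, so the essential/residual splitting bounds it by $\mathcal E$.

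The augmented part contributes $\lel\Nu;(s-\beta)(\omega_s-\omega_\beta)\ril$, and you must split it as
\[
(\lambda+2\mu)(\omega_s-\omega_\beta)=s(1-s)(g-\tilde g)+(s-\beta)(1-s-\beta)\tilde g .
\]
Young's inequality, with the weight $\sqrt{s(1-s)}$ placed on $g-\tilde g$, then gives a small multiple of $\lel\Nu;s(1-s)(g-\tilde g)^2\ril$ plus $C\lel\Nu;(s-\beta)^2\ril$.

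Your list also omits the trace terms $\lel\Nu;(s-\beta)(\tr\mathbb D_{\vv}-\div\vw)\ril\tilde g$ and $\lel\Nu;(s-\beta)^2\tr\mathbb D_{\vv}\ril$. These are absorbed through \eqref{mv-poincare-korn} using $|s-\beta|\le1$. With this bookkeeping, the rest of your argument goes through.
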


To prove this theorem, we use the method of relative entropy, developed in the previous work of the authors for the system with algebraic closure. 
The suitable relative energy for the two-fluid system with a PDE closure \eqref{S} is
\eq{
\label{rel_entr}
E(\alpha,R,Q,\vu \,| \, \beta,\BR,\BQ,\vw)
=&
\frac12(R+Q) |\vu-\vw|^2 + (\alpha-\beta)^2  
\\
&
+
\alpha\Big(\Hp(\vrp)-\Hp'(\bvrp)(\vrp-\bvrp)-\Hp(\bvrp)
\Big) \\
&
+
(1-\alpha)\Big( \Hm(\vrm)-\Hm'(\bvrm)(\vrm-\bvrm)-\Hm(\bvrm)
\Big) ,
}
where for brevity we set 
\eq{\label{RQ_def}
R=\alpha\vrp,\quad Q=(1-\alpha)\vrm,\quad
\BR=\beta\bvrp,\quad \BQ=(1-\beta)\bvrm,
}
and
\begin{align}\label{H_def}
H_+(\vrp)=\frac{1}{\gp-1}\vrp^\gp,\quad \Hm(\vrm)=\frac{1}{\gm-1}\vrm^\gm.
\end{align}
The variables $\beta,\BR,\BQ,\vw$ stand for arbitrary, sufficiently smooth reference state satisfying
\eq{\label{reference}
0<\beta<1,\quad 0<r,Q<\infty,\quad \vw\Big|_{\partial\Omega}=\vc{0}.
}

The purpose of the second part of this work will be to show that the measure-valued generalization of this functional remains small provided it is small initially, and that the regular solution $(\beta, \BR, \BQ,\vw)$ emanating from the same initial data exists.


\section{Construction of dissipative measure-valued solution}
In this section we prove Theorem \ref{t:mv}. We start from solving the approximate system with parameters $\ep,n$ being fixed. In the further sections we recover the original system by letting the approximation parameters to the limit.

\subsection{Approximation scheme}
We approximate system \eqref{S} as follows. 
\begin{itemize}
\item
The continuity and the transport equations are regularized by means of standard parabolic regularization. For $\ep>0$, we have:
\begin{subequations}\label{Se}
\begin{align}
  &\pt R+ \Div(R\vu )=\ep\lap R,\label{SRe}\\ 
  &\pt Q + \Div(Q\vu )=\ep \lap Q ,\label{STe}\\
&\pt\alpha+\vu\cdot\Grad\alpha=\frac{(1-\alpha)\alpha }{\lambda+2\mu}\lr{\lr{\frac{R}{\alpha}}^\gp-\lr{\frac{Q}{1-\alpha}}^\gm}+\ep\lap\alpha.\label{Salphae}
\end{align}
\end{subequations} 
We supplement the system \eqref{Se} with the Neumann boundary conditions
\eqh{
\Grad Y\cdot\vc{n}\Big|_{\partial\Omega}=0,
}
as well as the regular initial conditions:
\eqh{
Y(0,x)=Y_{0,n}(x)\in C^{2,\sigma}(\overline\Omega),\ \sigma>0,\quad
\inf_\Omega Y_{0,n}>0,\quad \Grad Y_{0,n}\cdot\vc n\big|_{\partial\Omega}=0,
}
where for $Y$ we substitute $R,Q$ or $\alpha$. Furthermore,
$\sup_\Omega\alpha_{0,n}<1$.

\item The momentum equation \eqref{Su} is replaced by the Faedo--Galerkin approximation:
we look for the approximate solution $\vu\in  C^1([0,T];X_n)$, where $X_n$ is the linear hull of the first $n$ eigenvectors of the Laplace operator with Dirichlet boundary conditions, such that  the following
integral equation
\eq{ \label{Sue}
&\inttauO{\big((R_n+Q_n)\vu_n \cdot \partial_t \vcg{\varphi} + (R_n+Q_n)(\vu_n\otimes \vu_n): \Grad \vcg{\varphi} \big)}\\
= &\inttauO{ \vS(\Grad\vu_n):\nabla \vcg{\varphi}} -\inttauO{  p(\alpha_n,R_n,Q_n) \Div  \vcg{\varphi}}\\
&+\ep\inttauO{\Grad(R_n+Q_n)\cdot\Grad\vu_n\cdot \vcg{\varphi}}\\
&+ \intO{(R_n+Q_n)\vu_n \cdot \vcg{\varphi} } \Big|_{t=0}^{t=\tau}, 
}
holds for all $\tau\in(0,T]$ and for any test function $\vcg{\varphi}\in C^1([0,T];X_n)$, where
$R_{n,\ep}=R_{R_{0,n}}(\vu_{n,\ep})$ and
$Q_{n,\ep}=Q_{Q_{0,n}}(\vu_{n,\ep})$ are the unique solutions of
\eqref{SRe}, \eqref{STe}, while
$\alpha_{n,\ep}=\alpha_{\alpha_{0,n}}(\vu_{n,\ep},R_{n,\ep},Q_{n,\ep})$
is the unique solution of \eqref{Salphae}. We suppress the second index only
when $n$ and $\ep$ are fixed.
\item The initial parametrized probability measure
$\{\Nu_{0,x}\}_{x\in\Omega}$ satisfies  the assumptions of
Theorem~\ref{t:mv}. By the standard recovery argument for finite-energy
Young measures, there are smooth data
$(\alpha_{0,n},R_{0,n},Q_{0,n},\vu_{0,n})$, with
$0<\alpha_{0,n}<1$, $R_{0,n},Q_{0,n}>0$, and
$\vu_{0,n}\in X_n$, which generate the $(s,r,q,\vv)$-marginal of
$\Nu_{0,x}$ and satisfy
\[
 E(s,r,q,\vv):=\frac12(r+q)|\vv|^2+P(s,r,q),
\]
\begin{equation}\label{initial-recovery}
 \int_\Omega E(\alpha_{0,n},R_{0,n},Q_{0,n},\vu_{0,n})\,\dx
 \longrightarrow
 \int_\Omega\left\langle\Nu_{0,x};E(s,r,q,\vv)\right\rangle\,\dx .
\end{equation}
The approximating data are kept fixed and compatible with Neumann and Dirichlet boundary conditions  while $\ep\to0$. 
\end{itemize}

For $\ep>0$ and $n$ fixed, construction of solution $R_n,Q_n$  satisfying the above approximation and some uniform bounds follows the strategy from \cite{NM20}, which we roughly describe in the next subsection. We will therefore only emphasize the construction of solution to the volume fraction equation \eqref{Salphae}.



 \subsection{Solvability of the approximate system}
 
 In this section we take $\ep>0$, $n\in\mathbb N$ fixed. 

  \paragraph{Step I: Linearization.}
 Let us first fix $\vu=\vu_n\in C^1([0,T];X_n)$.  Since $X_n$ is
 spanned by smooth (Dirichlet) eigenfunctions of the Laplace operator with Dirichlet boundary conditions, $\vu$ is smooth in space,
 $\vu\cdot\vc n|_{\partial\Omega}=0$, and
 \[
 D:=\|\Div\vu\|_{L^\infty((0,T)\times\Omega)}<\infty .
 \]
 Given $\vu$ we will construct the functions $R,Q,\alpha$ solving \eqref{SRe}, \eqref{STe}, \eqref{Salphae}, respectively.
 \paragraph{Step II: The densities.}
 In advective form equation \eqref{SRe} reads 
 $$\pt R+\vu\cdot\Grad R-\ep\lap R=-R\,\Div\vu,$$
 and it is a linear uniformly parabolic equation with smooth bounded coefficients and
 Neumann data; classical theory (cf.\ \cite{AFN21}) yields existence of a unique
 $R\in C^{1,2}([0,T]\times\overline\Omega)$, and likewise $Q$ solving \eqref{SRe}, \eqref{STe}, respectively. The spatially
 constant functions $e^{Dt}\sup_\Omega R_{0,n}$ and $e^{-Dt}\inf_\Omega R_{0,n}$
 are super/sub-solutions, so the maximum principle gives
 \[
 0<e^{-Dt}\inf_\Omega R_{0,n}\le R\le e^{Dt}\sup_\Omega R_{0,n} ,
 \]
 and similarly for $Q$. Hence, with
 \[
 \underline m:=e^{-DT}\min\{\textstyle\inf_\Omega R_{0,n},\inf_\Omega Q_{0,n}\}>0,
 \qquad
 \overline M:=e^{DT}\max\{\textstyle\sup_\Omega R_{0,n},\sup_\Omega Q_{0,n}\},
 \]
 one has $\underline m\le R,Q\le\overline M$ on $[0,T]\times\Omega$.
 \paragraph{Step III: The volume fraction.}
 We write \eqref{Salphae} as
 \begin{equation}\label{alpha-eq}
 	\pt\alpha+\vu\cdot\Grad\alpha-\ep\lap\alpha=F(t,x,\alpha),\qquad
 	F(t,x,\alpha):=\frac{\alpha(1-\alpha)}{\lambda+2\mu}
 	\lr{\lr{\tfrac R\alpha}^{\gp}-\lr{\tfrac Q{1-\alpha}}^{\gm}}.
 \end{equation}
 As $R,Q$ are bounded and bounded below, $F(t,x,\cdot)\in C^1(0,1)$ but
 \begin{equation}\label{blowup}
 	\lim_{\alpha\to0^+}F=+\infty,\qquad \lim_{\alpha\to1^-}F=-\infty ,
 \end{equation}
 so $F$ fails to be Lipschitz up to $\{0,1\}$ and a direct application of
 parabolic existence theory  is not available. However, the singularity has the right sign, which we will make more precise below and we will use it to construct solution to \eqref{alpha-eq} with $F(\alpha)$ replaced by certain modification.
 
This is the reason why we assume that the initial data for the volume fraction satisfy
 \eqh{0<\inf_\Omega\alpha_{0,n}\le\sup_\Omega\alpha_{0,n}<1.}
 We will prove the following result.
 \begin{lemma}\label{lem:alpha}
 	There exist $\eta\in(0,\tfrac12)$, depending only on
 	$\underline m,\overline M,\gp,\gm,\alpha_{0,n}$, and a unique
 	$\alpha\in C^{1,2}([0,T]\times\overline\Omega)$ solving \eqref{alpha-eq} with
 	$\Grad\alpha\cdot\vc n|_{\partial\Omega}=0$, $\alpha(0)=\alpha_{0,n}$, and
 	\begin{equation}\label{eta}
 		0<\eta\le\alpha\le1-\eta<1\qquad\text{on }\ [0,T]\times\Omega .
 	\end{equation}
 \end{lemma}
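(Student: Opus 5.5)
The approach is to truncate the singular nonlinearity, solve the resulting Lipschitz problem, and then show by comparison with constant sub- and supersolutions that the truncation is never active. First I fix $\eta$. Since $\underline m\le R,Q\le\overline M$, for $\alpha\in(0,\tfrac12]$ we have
\[
F(t,x,\alpha)\ge \frac{\alpha(1-\alpha)}{\lambda+2\mu}\Big(\underline m^{\gp}\alpha^{-\gp}-(2\overline M)^{\gm}\Big).
\]
Because $\gp>1$, the right-hand side tends to $+\infty$ as $\alpha\to0^+$. Hence there is $\eta_1>0$, depending only on $\underline m,\overline M,\gamma^\pm$, with $F(t,x,\alpha)>0$ for all $\alpha\in(0,\eta_1]$ and all $(t,x)$. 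Symmetrically there is $\eta_2$ with $F(t,x,\alpha)<0$ for $\alpha\in[1-\eta_2,1)$. Set
\[
\eta:=\min\Big\{\eta_1,\eta_2,\inf_\Omega\alpha_{0,n},1-\sup_\Omega\alpha_{0,n}\Big\}\in(0,\tfrac12).
\]

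Next I introduce the truncation $\tilde F(t,x,\alpha):=F(t,x,\pi(\alpha))$, where $\pi(\alpha)=\min\{\max\{\alpha,\eta\},1-\eta\}$, or better a smooth monotone version of $\pi$ that equals the identity on $[\eta,1-\eta]$ and maps $\R$ into $[\eta/2,1-\eta/2]$, with $\eta$ shrunk slightly so the sign conditions still hold on $[\eta/2,\eta]$. Since $R,Q\in C^{1,2}$ are bounded below, $\tilde F$ is smooth in $\alpha$, globally Lipschitz and bounded, and Hölder in $(t,x)$. The problem
\[
\pt\alpha+\vu\cdot\Grad\alpha-\ep\lap\alpha=\tilde F(t,x,\alpha)
\]
with Neumann data and $C^{2,\sigma}$ compatible initial data is then a semilinear uniformly parabolic problem with smooth bounded drift, where $\vu\cdot\vc n=0$ on $\partial\Omega$. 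Classical theory (Ladyzhenskaya--Solonnikov--Ural'tseva, via a Banach fixed point on a short interval iterated thanks to the global Lipschitz bound, and then Schauder estimates) gives a unique $\alpha\in C^{1,2}([0,T]\times\overline\Omega)$.

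Then I prove the bound \eqref{eta} by the maximum principle. Consider $\min_{\overline\Omega}\alpha(t,\cdot)$. If $\alpha$ first reaches a value below $\eta$, look at a point of minimum $(t_0,x_0)$ where $\alpha<\eta$ after a first-touching argument. At that point $\Grad\alpha=0$ and $\lap\alpha\ge0$; at the boundary this still holds thanks to the Neumann condition and Hopf's lemma. So $\pt\alpha\ge\tilde F>0$ there. The clean way to phrase this is to apply the comparison principle to $(\eta-\alpha)_+$: multiply the equation by $-(\eta-\alpha)_+$ and integrate, using $\tilde F\ge0$ on $\{\alpha\le\eta\}$. This gives
\[
\Dt\intO{(\eta-\alpha)_+^2}\le C\intO{(\eta-\alpha)_+^2},
\]
where the drift term is controlled through $\Div\vu\in L^\infty$ after integration by parts, with no boundary term since $\vu\cdot\vc n=0$. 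Because $(\eta-\alpha_{0,n})_+=0$, Gronwall's inequality yields $\alpha\ge\eta$. The same argument applied to $(\alpha-1+\eta)_+$ gives $\alpha\le1-\eta$. Hence $\tilde F(\alpha)=F(\alpha)$, so $\alpha$ solves \eqref{alpha-eq}.

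Finally, uniqueness among solutions valued in $(0,1)$ follows because any such solution with values in a compact subinterval can be compared with ours on a region where $F$ is Lipschitz. An energy estimate for the difference then closes by Gronwall. For a general solution in $(0,1)$, the argument above shows it also stays in $[\eta,1-\eta]$, so it solves the truncated problem, which has a unique solution. The main obstacle is the bookkeeping in the first step: choosing $\eta$ so that the sign conditions hold uniformly in $(t,x)$, which needs both the lower bound $\underline m$ and $\gamma^\pm>1$. The sign dominance of the singular term is exactly what that choice must secure.
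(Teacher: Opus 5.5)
Your proof is correct and has the same architecture as the paper's: truncate the singular nonlinearity, solve the resulting globally Lipschitz semilinear problem, show that the strip $[\eta,1-\eta]$ is invariant so that the truncation is never active, and then conclude uniqueness.

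The difference lies in the invariance step. The paper checks the sign only at the barrier values, $F(\cdot,\eta)\ge0\ge F(\cdot,1-\eta)$. It then writes $F_\delta(\alpha^\delta)-F_\delta(1-\eta)=b\,W$ with a bounded but $\delta$-dependent coefficient $b$, and applies the classical pointwise parabolic maximum principle after the shift $e^{-\widehat L_\delta t}$.

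You instead choose $\eta$ inside the whole zone where $F>0$ (resp. $F<0$) and let the truncation map into that zone, so that $\tilde F\ge0$ on the entire set $\{\alpha\le\eta\}$. A Stampacchia-type $L^2$ estimate for $(\eta-\alpha)_+$ then closes with no linearization, using only $\Div\vu\in L^\infty$, $\vu\cdot\vc n=0$ and the Neumann condition. This buys some robustness: the same argument would work for weak $W^{1,2}$ solutions. Moreover, $F>0$ on $(0,\eta]$ also holds for the untruncated $F$. Hence the same estimate shows that every classical solution of \eqref{alpha-eq} with values in $(0,1)$ already satisfies \eqref{eta}. Your uniqueness claim therefore covers all such solutions, not only those already known to lie in the strip, which is the case the paper's Lipschitz comparison addresses.

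One minor correction to your commentary: the sign condition does not need $\gamma^\pm>1$. The bracket $\underline m^{\gp}\alpha^{-\gp}-(2\overline M)^{\gm}$ is positive for $\alpha<\underline m(2\overline M)^{-\gm/\gp}$ for any positive exponents, and the blow-up of $F$ itself is never used in your argument.
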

 \begin{pf}
  {\emph{Step 1. Truncation. }}For $\delta\in(0,\tfrac12)$ we consider the following truncation of $F$:
 	\begin{equation}\label{trunc}
 		F_\delta(t,x,\alpha):=
 		\begin{cases}
 			F(t,x,\delta),& \alpha\le\delta,\\
 			F(t,x,s),& \delta\le \alpha\le1-\delta,\\
 			F(t,x,1-\delta),& \alpha\ge1-\delta .
 		\end{cases}
 	\end{equation}

      {\emph{Step 2. Existence for fixed $\delta$. }}
 	For $\alpha\in[\delta,1-\delta]$ the fractions $R/\alpha,\,Q/(1-\alpha)$ are bounded by
 	$\overline M/\delta$, so $F_\delta$ is bounded, $|F_\delta|\le C_\delta$, and
 	globally Lipschitz in $\alpha$, $|\partial_\alpha F_\delta|\le L_\delta$, uniformly in
 	$(t,x)$ (with $C_\delta,L_\delta\uparrow\infty$ as $\delta\downarrow0$).\\

For $\delta$ fixed, the nonlinearity  in the problem
 	\begin{equation}\label{trunc-eq}
 		\pt\alpha^\delta+\vu\cdot\Grad\alpha^\delta-\ep\lap\alpha^\delta
 		=F_\delta(t,x,\alpha^\delta),\quad
 		\Grad\alpha^\delta\cdot\vc n|_{\partial\Omega}=0,\quad
 		\alpha^\delta(0)=\alpha_{0,n},
 	\end{equation}
 	is globally Lipschitz and  H\"older continuous. Hence, by the standard
 	contraction/Schauder argument,  it admits  a unique
 	solution $\alpha^\delta\in C^{1,2}([0,T]\times\overline\Omega)$. The compatibility
 	$\Grad\alpha_{0,n}\cdot\vc n|_{\partial\Omega}=0$ together with
 	$\alpha_{0,n}\in C^{2,\sigma}(\overline{\Omega})$ makes the solution classical up to $t=0$.\\
    
{\emph{Step 3. Choice of $\eta$. }}
We now choose $\eta\in(0,\tfrac12)$ such that
 	\begin{equation}\label{eta-choice}
 		\eta\le\min\Big\{\inf_\Omega\alpha_{0,n},\;1-\sup_\Omega\alpha_{0,n},\;
 		\underline m\,(2\overline M)^{-\gm/\gp},\;
 		\underline m\,(2\overline M)^{-\gp/\gm}\Big\}.
 	\end{equation}
 	Using $1-\eta\ge\tfrac12$, the last two entries give
 	\begin{equation}\label{barrier}
 		\lr{\tfrac{\underline m}\eta}^{\gp}\ge\lr{\tfrac{\overline M}{1-\eta}}^{\gm},
 		\qquad
 		\lr{\tfrac{\underline m}\eta}^{\gm}\ge\lr{\tfrac{\overline M}{1-\eta}}^{\gp},
 	\end{equation}
 	which is achievable since the left sides tend to $\infty$ while the right sides
 	stay bounded as $\eta\to0$. With $R\ge\underline m,\ Q\le\overline M$
 	(resp.\ $R\le\overline M,\ Q\ge\underline m$) and the monotonicity of
 	$s\mapsto s^{\gp},s^{\gm}$, \eqref{barrier} yields
 	\begin{equation}\label{signcond}
 		F(t,x,\eta)\ge0,\qquad F(t,x,1-\eta)\le0\qquad\text{on }[0,T]\times\Omega .
 	\end{equation}
 	From now on fix $\delta\le\eta$; then $[\eta,1-\eta]\subseteq[\delta,1-\delta]$, so
 	$F_\delta(\cdot,\eta)=F(\cdot,\eta)\ge0$ and
 	$F_\delta(\cdot,1-\eta)=F(\cdot,1-\eta)\le0$.\\

 	{\emph{Step 4. Upper bound for $\alpha^\delta$. }} Let $W:=\alpha^\delta-(1-\eta)$. As $1-\eta$ is
 	constant, \eqref{trunc-eq} and \eqref{signcond} give
 	\[
 	\pt W+\vu\cdot\Grad W-\ep\lap W
 	=\underbrace{F_\delta(t,x,\alpha^\delta)-F_\delta(t,x,1-\eta)}_{=\,b\,W}
 	+\underbrace{F_\delta(t,x,1-\eta)}_{\le0}\le b\,W,
 	\]
 	where $b(t,x):=\int_0^1\partial_s F_\delta\big(t,x,(1-\eta)+\theta W\big)\,{\rm d}\theta$,
 	$|b|\le L_\delta$. Thus
 	\[
 	\pt W+\vu\cdot\Grad W-\ep\lap W-b\,W\le0,\quad
 	\Grad W\cdot\vc n|_{\partial\Omega}=0,\quad
 	W(0)=\alpha_{0,n}-(1-\eta)\le0 .
 	\]
 	The coefficient $b$ is not signed, to fix it we set $\widehat L_\delta:=L_\delta+D$ and
 	$V:=e^{-\widehat L_\delta t}W$. Dividing by $e^{\widehat L_\delta t}$,
 	\[
 	\pt V+\vu\cdot\Grad V-\ep\lap V+(\widehat L_\delta-b)\,V\le0,\qquad
 	\widehat L_\delta-b\ge D\ge0,\qquad V(0)\le0 .
 	\]
 	By the parabolic maximum principle (Neumann data, non-negative zeroth-order
 	coefficient) $V\le0$, i.e.\ $\alpha^\delta\le1-\eta$ on $[0,T]\times\Omega$.\\

{\emph{Step 5. Lower bound for $\alpha^\delta$.}} Similarly, let $\omega:=\eta-\alpha^\delta$. As $\eta$ is
 	constant, \eqref{trunc-eq} and \eqref{signcond} give
 	\[
 	\pt\omega+\vu\cdot\Grad\omega-\ep\lap\omega
 	=\underbrace{F_\delta(t,x,\eta)-F_\delta(t,x,\alpha^\delta)}_{=\,\tilde b\,\omega}
 	-\underbrace{F_\delta(t,x,\eta)}_{\ge0}\le\tilde b\,\omega,
 	\]
 	with $\displaystyle \tilde b(t,x):=\int_0^1\partial_s
 	F_\delta\big(t,x,\alpha^\delta+\theta\omega\big)\,{\rm d}\theta$, $|\tilde b|\le L_\delta$.
 	Hence
 	\[
 	\pt\omega+\vu\cdot\Grad\omega-\ep\lap\omega-\tilde b\,\omega\le0,\qquad
 	\omega(0)=\eta-\alpha_{0,n}\le0,
 	\]
 	and the same shift $e^{-\widehat L_\delta t}$ and maximum principle yield
 	$\omega\le0$, i.e.\ $\alpha^\delta\ge\eta$ on $[0,T]\times\Omega$. Together with
 	the upper bound this proves \eqref{eta} for $\alpha^\delta$, with $\eta$
 	independent of $\delta$.\\

{\emph{Step 6. Removing the truncation and uniqueness.}} Since $\delta\le\eta\le\tfrac12$, we have 
 	$[\eta,1-\eta]\subseteq[\delta,1-\delta]$. By the estimate by
 	\eqref{eta}, we therefore have $F_\delta\equiv F$ i.e.  the solution never meets the truncation, so $\alpha:=\alpha^\delta$
 	solves \eqref{alpha-eq}. 
    
    Regarding the uniqueness, the invariant strip $[\eta,1-\eta]\Subset(0,1)$, thus
 	$F(t,x,\cdot)$ is Lipschitz, so the difference of two solutions with equal data
 	satisfies a linear parabolic inequality with bounded zeroth-order coefficient
 	and zero initial datum; the maximum principle (or Gr\"onwall) gives uniqueness.  $\Box$
 \end{pf}

 \paragraph{Step IV: The fixed point argument.}
 Setting $R_n:=R$, $Q_n:=Q$, $\alpha_n:=\alpha$ provides the coefficients of
 \eqref{Sue} with the uniform bounds $\underline m\le R_n,Q_n\le\overline M$ and
 $\eta\le\alpha_n\le1-\eta$; in particular $\alpha_n(t,x)\in(0,1)$. The fixed point argument for local-in-time solutions of the Faedo--Galerkin scheme
 then follows as in the standard theory for the single-component Navier--Stokes equations, see \cite{NS}, Chapter 7, and \cite{NM20,Novotny19} for similar constructions. This solution then can be extended to a global in-time solution provided the energy estimate holds uniformly in time.

To prove this, we first use $\vu_n$ as a test function in \eqref{Sue}  and integrate by parts to obtain
\eqh{
&\inttauOB{\vr_n\frac12\pt|\vu_n|^2+\vr_n\vu_n\otimes \vu_n:\Grad\vun}\\
&-\inttauO{\Grad(\alpha_n\vrp_n^\gp+(1-\alpha_n)\vrm_n^\gm)\cdot\vun}\\
&\quad=\inttauOB{\vS(\vu_n):\Grad\vu_n+\frac12\ep\Grad\vr_n\cdot\Grad|\vun|^2}+ \intO{\vr_n|\vu_n|^2 } \Big|_{t=0}^{t=\tau},
}
where for brevity of notation we denoted
\eq{\label{notation}
\vr_n=R_n+Q_n,\quad \vrp_n=\frac{R_n}{\alpha_n},\quad \vrm_n\frac{Q_n}{(1-\alpha_n)}.}
Summing up equations \eqref{SRe} and \eqref{STe} and multiplying them by $\frac12|\vun|^2$ and integrating by parts, we obtain
\eqh{
&\inttauOB{-\frac12\vr_n\pt|\vun|^2-\frac12\vr_n\vun\cdot\Grad|\vun|^2+\frac12\ep\Grad\vr_n\cdot\Grad|\vun|^2}=-\frac12\intO{\vr_n|\vu_n|^2 } \Big|_{t=0}^{t=\tau}.
}
Summing up the two formulas, we obtain
\eqh{
&\frac12\intO{\vrn|\vun|^2}\Big|_0^\tau+\inttauO{\vS(\vun):\Grad\vun}=-\inttauO{\Grad(\alpha_n\vrp_n^\gp+(1-\alpha_n)\vrm_n^\gm)\cdot\vun}.
}
In order to proceed, let us analyze the last term
\eq{&\inttauO{\Grad(\alpha_n\vrp_n^\gp+(1-\alpha_n)\vrm_n^\gm)\cdot\vu_n}\\
&=
\inttauO{(\Grad\alpha_n\vrp_n^\gp+\Grad(1-\alpha_n)\vrm_n^\gm)\cdot\vu_n}\\
&+\inttauO{\frac{\gp}{\gp-1}\alpha_n\vrp_n\vu_n\cdot\Grad\vrp_n^{\gp-1}}
+\inttauO{\frac{\gm}{\gm-1}(1-\alpha_n)\vrm_n\vu_n\cdot\Grad\vrm_n^{\gm-1}}\\
&=
\inttauO{(-\pt\alpha_n\vrp_n^\gp-\pt(1-\alpha_n)\vrm_n^\gm)}+\inttauO{\frac{(1-\alpha_n)\alpha_n }{\lambda+2\mu}(\vrp_n^\gp-\vrm_n^\gm)^2}\\
&+\ep\inttauOB{\lap\alpha_n\vrp_n^\gp+\lap(1-\alpha_n)\vrm_n^\gm}\\
&+\inttauO{\frac{\gp}{\gp-1}\pt(\alpha_n\vrp_n)\vrp_n^{\gp-1}}
+\inttauO{\frac{\gm}{\gm-1}\pt((1-\alpha_n)\vrm_n)\vrm_n^{\gm-1}}\\
&-\ep\inttauO{\frac{\gp}{\gp-1}\lap(\alpha_n\vrp_n)\vrp_n^{\gp-1}}
-\ep\inttauO{\frac{\gm}{\gm-1}\lap((1-\alpha_n)\vrm_n)\vrm_n^{\gm-1}},
}
where we have subsequently used equations \eqref{Salphae},\eqref{SRe} and \eqref{STe}.
Now notice that we have
\eqh{
&\inttauO{(-\pt\alpha_n\vrp_n^\gp-\pt(1-\alpha_n)\vrm_n^\gm)}\\
&+\inttauO{\frac{\gp}{\gp-1}\pt(\alpha_n\vrp_n)\vrp_n^{\gp-1}}
+\inttauO{\frac{\gm}{\gm-1}\pt((1-\alpha_n)\vrm_n)\vrm_n^{\gm-1}}\\
&=\intOB{\frac{1}{\gp-1}\alpha_n\vrp_n^{\gp}+\frac{1}{\gm-1}(1-\alpha_n)\vrm_n^{\gm}}\Big|_0^\tau,
}
which is the missing part of the energy. As for the $\ep$-dependent terms, we see that we have
\eq{&\ep\inttauO{\lap\alpha_n\vrp_n^\gp}-\ep\inttauO{\frac{\gp}{\gp-1}\lap(\alpha_n\vrp_n)\vrp_n^{\gp-1}}\\
&=-\ep\inttauO{\Grad\alpha_n\cdot\Grad\vrp_n^\gp}+\ep\inttauO{\frac{\gp}{\gp-1}\Grad(\alpha_n\vrp_n)\cdot\Grad\vrp_n^{\gp-1}}\\
&=\ep\inttauO{\frac{\gp}{\gp-1}\alpha_n\Grad\vrp_n\cdot\Grad\vrp_n^{\gp-1}},}
and similarly for the phase ``-''. All in all, our energy estimate reads
\eqh{
&\frac12\intO{\vrn\vu_n^2}\Big|_0^\tau+\intOB{\frac{1}{\gp-1}\alpha_n\vrp_n^{\gp}+\frac{1}{\gm-1}(1-\alpha_n)\vrm_n^{\gm}}\Big|_0^\tau\\
&+\inttauO{\vS(\vu_n):\Grad\vu_n}+\inttauO{\frac{(1-\alpha_n)\alpha_n }{\lambda+2\mu}(\vrp_n^\gp-\vrm_n^\gm)^2}\\
&+\ep\inttauO{\gp\alpha_n\vrp_n^{\gp-2}|\Grad\vrp_n|^2}+\ep\inttauO{\gm(1-\alpha_n)\vrm_n^{\gm-2}|\Grad\vrm_n|^2}=0.
}
Coming back to our original notation \eqref{notation}, and retaining the dependence on both parameters $(\alpha_{0,n},R_{0,n},Q_{0,n},\vu_{0,n})$ this implies 
\eq{\label{en_en}
 &\int_\Omega E(\alpha_{n,\ep},R_{n,\ep},Q_{n,\ep},\vu_{n,\ep})(\tau)\,\dx
 +\int_0^\tau\!\!\int_\Omega\vS(\Grad\vu_{n,\ep}):\Grad\vu_{n,\ep}\,\dxdt\\
 &\quad+\int_0^\tau\!\!\int_\Omega
 \frac{\alpha_{n,\ep}(1-\alpha_{n,\ep})}{\lambda+2\mu}
 \left[\left(\frac{R_{n,\ep}}{\alpha_{n,\ep}}\right)^\gp
 -\left(\frac{Q_{n,\ep}}{1-\alpha_{n,\ep}}\right)^\gm\right]^2\dxdt
 \le\int_\Omega E(\alpha_{0,n},R_{0,n},Q_{0,n},\vu_{0,n})\,\dx,
}
where  $\tau$ is arbitrary in $(0,T]$.

We can summarize the findings of this section as follows.
\begin{prop}[Solvability of the approximate problem]\label{prop:approx-solvability}
Let $n\in\mathbb N$ and $\ep>0$. For the smooth compatible data above, the
regularized Galerkin problem (\ref{SRe}--\ref{initial-recovery}) admits a global solution
$(\alpha_{n,\ep},R_{n,\ep},Q_{n,\ep},\vu_{n,\ep})$ such that $$\alpha_{n,\ep},R_{n,\ep},Q_{n,\ep}\in C^{1,2}([0,T]\times\overline\Omega),$$
$$\vu_{n,\ep}\in C^1([0,T];X_n),$$ 
and
\[
 R_{n,\ep},Q_{n,\ep}>0,\qquad
 0<\eta_{n}\le\alpha_{n,\ep}\le1-\eta_{n}<1.
\]
The masses of $R_{n,\ep}$ and $Q_{n,\ep}$ are conserved and the energy
identity \eqref{en_en} holds for every $\tau\in(0,T]$; in particular, the solution
extends globally in time. 
\end{prop}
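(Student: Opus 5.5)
The proof assembles the steps above into a fixed-point/continuation argument for the Galerkin velocity. Fix $n$ and $\ep$.

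For a given $\vu\in C([0,T'];X_n)$ with $\sup_t\|\vu(t)\|_{X_n}\le K$, Step II yields unique classical $R=R[\vu]$ and $Q=Q[\vu]$. They satisfy $\underline m\le R,Q\le\overline M$, where $\underline m,\overline M$ depend on $K$ through $D\lesssim_n K$. Lemma~\ref{lem:alpha} then yields a unique $\alpha=\alpha[\vu]\in C^{1,2}$ with $\eta\le\alpha\le1-\eta$. On the strip $[\eta,1-\eta]\times[\underline m,\overline M]^2$ the map $(\alpha,R,Q)\mapsto p(\alpha,R,Q)$ is smooth. Differences of two solutions of the linear parabolic problems, and of the semilinear problem \eqref{alpha-eq} with Lipschitz $F$ on the invariant strip, can be estimated by energy or maximum-principle bounds with Gronwall. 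This shows that $\vu\mapsto(R,Q,\alpha)$ is Lipschitz into $C([0,T'];C(\overline\Omega))$.

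Next, \eqref{Sue} is rewritten as an ODE in $X_n$. Because $R+Q\ge 2\underline m$, the operator $\mathcal M[R+Q]\colon X_n\to X_n^*$, $\langle\mathcal M[\vr]\vv,\vcg\varphi\rangle=\intO{\vr\vv\cdot\vcg\varphi}$, is invertible with $\|\mathcal M^{-1}\|\le (2\underline m)^{-1}$. Lipschitz bounds for $\mathcal M^{-1}$ follow as in \cite{NS}, Chapter 7. The solution is then the fixed point of
\[
\vu(t)=\mathcal M^{-1}[\vr(t)]\Big(\mathcal M[\vr_{0,n}]\vu_{0,n}+\int_0^t\mathcal N(\vu,R,Q,\alpha)\,{\rm d}s\Big).
\]
On the finite-dimensional space all norms are equivalent, so this map is a contraction on a ball of $C([0,T'];X_n)$ for small $T'$. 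Regularity of $\vu$ in time bootstraps to $C^1([0,T'];X_n)$, and $R,Q,\alpha\in C^{1,2}$.

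To continue the solution globally, it suffices to bound $\sup_t\|\vu_n(t)\|_{X_n}$ a priori, since the local time $T'$ depends only on this bound. The identity \eqref{en_en}, derived above for the local solution, gives $\intO{(R_n+Q_n)|\vu_n|^2}(\tau)\le C_0$ and $\int_0^\tau\|\Grad\vu_n\|_{L^2}^2\,\dt\le C_0$.

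The main obstacle is that the kinetic energy is weighted by $R_n+Q_n$, whose lower bound depends on $\|\Div\vu_n\|_{L^\infty}$. I would close this circle as in the single-fluid Galerkin theory. Norm equivalence on $X_n$ together with the dissipation bound gives $\int_0^\tau\|\Div\vu_n\|_{L^\infty}\,\dt\le c_n\sqrt{\tau C_0}$. The maximum principle bound of Step II can be rerun with $D$ replaced by the time-integrated quantity, using super/subsolutions $\exp(\pm\int_0^t\|\Div\vu\|_\infty)$. This gives $R_n,Q_n\ge\underline m(n,T,C_0)>0$ uniformly in time. Hence $\|\vu_n(t)\|_{L^2}^2\le C_0/\underline m$, which bounds $\|\vu_n(t)\|_{X_n}$ uniformly on $[0,T]$.

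The constants $\underline m,\overline M$ are then fixed on $[0,T]$, so $\eta=\eta_n$ from \eqref{eta-choice} is fixed. With these bounds the solution can be iterated up to $T$. Mass conservation follows by integrating \eqref{SRe} and \eqref{STe} with the Neumann condition and $\vu\cdot\vc n=0$.
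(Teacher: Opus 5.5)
Your proposal is correct and follows essentially the same route as the paper: $R,Q$ from linear parabolic theory with maximum-principle bounds, $\alpha$ via Lemma~\ref{lem:alpha}, the Galerkin momentum equation closed by the standard fixed point of \cite{NS}, and global extension from the energy identity together with the $L^1(0,T;L^\infty)$ bound on $\Div\vu_n$ given by norm equivalence on $X_n$, which is exactly the observation of Remark~\ref{rem:uniform-separation-fixed-n}. Your time-integrated super/subsolutions $\exp(\pm\int_0^t\|\Div\vu\|_{L^\infty}\,{\rm d}s)$ make explicit a step the paper leaves implicit: replacing the sup-in-time constant $D$ of Step~II, which the energy does not control, by a quantity the energy bound does control.
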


\begin{rmk}\label{rem:uniform-separation-fixed-n}
For fixed $n$, the energy bound and equivalence of norms on $X_n$ imply
\[
 \int_0^T\|\Div\vu_{n,\ep}\|_{L^\infty(\Omega)}\,\dt\le C(n)
\]
uniformly in $\ep$. Thus the maximum-principle bounds for
$R_{n,\ep},Q_{n,\ep}$ and the  constant $\eta$ in
Lemma~\ref{lem:alpha} can be chosen independently of $\ep$. These constants
need not be uniform in $n$; Section~4.4 uses only $0\le\alpha_n\le1$.
\end{rmk}

\subsection{Passage to the limit $\ep \to 0$}

In this subsection $n$ is fixed and the full notation
$(\alpha_{n,\ep},R_{n,\ep},Q_{n,\ep},\vu_{n,\ep})$ is retained. In
particular, the velocity also depends on $\ep$. The initial data
$(\alpha_{0,n},R_{0,n},Q_{0,n},\vu_{0,n})$ do not depend on $\ep$.

By Proposition~\ref{prop:approx-solvability}, Remark~\ref{rem:uniform-separation-fixed-n},
and equivalence of norms on $X_n$, there are constants $c(n)$, $C(n)$, and $\eta(n)$ independent
of $\ep\in(0,1)$, such that
\begin{equation}\label{fixed-n-bounds}
\begin{aligned}
 &0<c(n)\le R_{n,\ep},Q_{n,\ep}\le C(n),\qquad
 \eta(n)\le\alpha_{n,\ep}\le1-\eta(n),\\
 &\|\vu_{n,\ep}\|_{L^\infty(0,T;X_n)}
 +\|R_{n,\ep}\|_{L^\infty(0,T;W^{1,2}(\Omega))}
 +\|Q_{n,\ep}\|_{L^\infty(0,T;W^{1,2}(\Omega))}\\
 &\hspace{35mm}
 +\|\alpha_{n,\ep}\|_{L^\infty(0,T;W^{1,2}(\Omega))}\le C(n)
\end{aligned}
\end{equation}
and
\begin{equation}\label{fixed-n-second-order}
 \sqrt\ep\big(\|\Delta R_{n,\ep}\|_{L^2(Q_T)}
 +\|\Delta Q_{n,\ep}\|_{L^2(Q_T)}
 +\|\Delta\alpha_{n,\ep}\|_{L^2(Q_T)}\big)\le C(n).
\end{equation}
Indeed, testing \eqref{SRe} by $R_{n,\ep}$ and integrating by parts, we first obtain that
\begin{equation*}
	\frac{1}{2}\Dt \|R_{n,\ep}\|_{L^2}^2 + \ep \|\Grad R_{n,\ep}\|_{L^2}^2 = -\frac{1}{2}\intO{R_{n,\ep}^2 \Div \vu_{n,\ep}} \le \frac{1}{2}\|\Div \vu_{n,\ep}\|_{L^\infty} \|R_{n,\ep}\|_{L^2}^2.
\end{equation*}
So, by Gr\"onwall's inequality:
\begin{equation}\label{R_L2_bound}
	\|R_{n,\ep}\|_{L^\infty(0,T; L^2(\Omega))}^2 + 2\ep \|\Grad R_{n,\ep}\|_{L^2(0,T; L^2(\Omega))}^2 \le e^{\|\Div \vu_{n,\ep}\|_{L^\infty} T} \|R_{0,n}\|_{L^2(\Omega)}^2 =: C_0(n).
\end{equation}

\smallskip
\noindent Testing \eqref{SRe} with $-\lap R_{n,\ep}$ and integrating by parts, we similarly may show that 
\begin{equation*}
	\Dt \|\Grad R_{n,\ep}\|_{L^2}^2 +2\ep \|\lap R_{n,\ep}\|_{L^2}^2\le C_1(\vu_{n,\ep})\|\Grad R_{n,\ep}\|_{L^2}^2 + C_2(\vu_{n,\ep})\|R_{n,\ep}\|_{L^2}^2,
\end{equation*}
where $C_1(\vu_{n,\ep}) := 2\|\Grad \vu_{n,\ep}\|_{L^\infty(\Omega)} + \|\Div \vu_{n,\ep}\|_{L^\infty(\Omega)} + 1$ and $C_2(\vu_{n,\ep}) := \|\Grad \Div \vu_{n,\ep}\|_{L^\infty(\Omega)}^2$. By \eqref{R_L2_bound} and Gr\"onwall inequality:
\eqh{
	&\|\Grad R_{n,\ep}\|_{L^\infty(0,T; L^2(\Omega))} + 2\ep \|\lap R_{n,\ep}\|_{L^2(0,T; L^2(\Omega))}^2\\
    &\le \Big(\|\Grad R_{0,n}\|_{L^2}^2 + C_2(\vu_{n,\ep}) C_0(n) T\Big)^{1/2} e^{\frac{1}{2}C_1(\vu_{n,\ep})T} =: C(n).
}
Exactly the same can be repeated for $Q_{n,\ep}$ and for $\alpha_{n,\ep}$.  Indeed, note that the source term  in the $\alpha$ equation \eqref{Salphae} as well as it's derivatives, remain  bounded due to \eqref{fixed-n-bounds}. This is the only point where the separation of $\alpha_{n,\ep}$ from the ends $0,1$ is used.

The scalar equations also give
\begin{equation}\label{fixed-n-time}
 \|\partial_tR_{n,\ep}\|_{L^2(Q_T)}+
 \|\partial_tQ_{n,\ep}\|_{L^2(Q_T)}+
 \|\partial_t\alpha_{n,\ep}\|_{L^2(Q_T)}\le C(n).
\end{equation}

Finally, it follows from the lower bounds in \eqref{fixed-n-bounds} and the projected momentum equation \ref{Sue} that $\{\vu_{n,\ep}\}_{\ep>0}$ is equicontinuous in
$C([0,T];X_n)$.

The Aubin--Lions compactness theorem, \eqref{fixed-n-time}, and the
finite-dimensional compactness yield, along a subsequence,
\begin{align}\label{fixed-n-convergence}
 &\vu_{n,\ep}\longrightarrow\vu_n
 &&\text{in }C([0,T];X_n),\nonumber\\
 &(R_{n,\ep},Q_{n,\ep},\alpha_{n,\ep})\longrightarrow(R_n,Q_n,\alpha_n)
 &&\text{in }C([0,T];L^2(\Omega))^3
\end{align}
and almost everywhere in $(0,T)\times\Omega$. Interpolation with the pointwise bounds in
\eqref{fixed-n-bounds} improves the scalar convergence to
$C([0,T];L^p(\Omega))$ for every finite $p$. Hence all pressure and
relaxation terms converge strongly in every finite $L^p((0,T)\times\Omega)$.

Moreover, \eqref{fixed-n-second-order} gives
\[
 \ep\Delta R_{n,\ep},\quad \ep\Delta Q_{n,\ep},\quad
 \ep\Delta\alpha_{n,\ep}\longrightarrow0\quad\text{in }L^2((0,T)\times\Omega).
\]

The artificial correction in the momentum equation also vanishes: after
integration by parts against a fixed $\vcg\varphi\in X_n$ it is bounded by
$C(n)\ep\|R_{n,\ep}+Q_{n,\ep}\|_{L^1(Q_T)}$. We therefore obtain
\begin{align}\label{galerkin-limit}
 &\partial_tR_n+\Div(R_n\vu_n)=0,\qquad
 \partial_tQ_n+\Div(Q_n\vu_n)=0,\nonumber\\
 &\partial_t\alpha_n+\vu_n\cdot\Grad\alpha_n
 =\frac{\alpha_n(1-\alpha_n)}{\lambda+2\mu}
 \left[\left(\frac{R_n}{\alpha_n}\right)^\gp
 -\left(\frac{Q_n}{1-\alpha_n}\right)^\gm\right],
\end{align}
together with the Galerkin momentum equation 
\eq{ \label{Sue:2}
&\inttauO{\big((R_n+Q_n)\vu_n \cdot \partial_t \vcg{\varphi} + (R_n+Q_n)(\vu_n\otimes \vu_n): \Grad \vcg{\varphi} \big)}\\
= &\inttauO{ \vS(\Grad\vu_n):\nabla \vcg{\varphi}} -\inttauO{  p(\alpha_n,R_n,Q_n) \Div  \vcg{\varphi}}\\
&+ \intO{(R_n+Q_n)\vu_n \cdot \vcg{\varphi} } \Big|_{t=0}^{t=\tau}. 
}

For $b\in C^1([0,1])$, the parabolic chain rule gives
\[
 \partial_tb(\alpha_{n,\ep})+\vu_{n,\ep}\cdot\Grad b(\alpha_{n,\ep})
 =b'(\alpha_{n,\ep})F_{n,\ep}
 +\ep b'(\alpha_{n,\ep})\Delta\alpha_{n,\ep}.
\]
The last term tends to zero directly in $L^2(Q_T)$, since
\[
 \|\ep b'(\alpha_{n,\ep})\Delta\alpha_{n,\ep}\|_{L^2(Q_T)}
 \le\|b'\|_{L^\infty}\sqrt\ep
 \big\|\sqrt\ep\Delta\alpha_{n,\ep}\big\|_{L^2(Q_T)}
 \longrightarrow0.
\]
Thus \eqref{galerkin-limit} holds in the renormalized sense.
Finally, lower semicontinuity and the strong convergence of all scalar
terms give, for every $\tau\in[0,T]$,
\begin{align}\label{energy-n}
 &\int_\Omega E(\alpha_n,R_n,Q_n,\vu_n)(\tau)\,\dx
 +\int_0^\tau\!\!\int_\Omega\vS(\Grad\vu_n):\Grad\vu_n\,\dxdt\nonumber\\
 &\quad+\int_0^\tau\!\!\int_\Omega
 \frac{\alpha_n(1-\alpha_n)}{\lambda+2\mu}
 \left[\left(\frac{R_n}{\alpha_n}\right)^\gp
 -\left(\frac{Q_n}{1-\alpha_n}\right)^\gm\right]^2\dxdt
 \le\int_\Omega E(\alpha_{0,n},R_{0,n},Q_{0,n},\vu_{0,n})\,\dx.
\end{align}

\subsection{Passage to the limit $n\to\infty$}
We now work with the sequence $(R_n,Q_n,\alpha_n,\vu_n)$ constructed in the previous subsection.

Since the energy estimate \eqref{energy-n} is uniform in
$n$, we have
\begin{align}\label{uniform-n-bounds}
 &
 \|R_n\|_{L^\infty(0,T;L^\gp(\Omega))}+
 \|Q_n\|_{L^\infty(0,T;L^\gm(\Omega))}\le C,\nonumber\\
 &\|\sqrt{R_n+Q_n}\,\vu_n\|_{L^\infty(0,T;L^2(\Omega))}
 +\|\vu_n\|_{L^2(0,T;W^{1,2}_0(\Omega))}\le C,\nonumber\\
 &\left\|\frac{\alpha_n(1-\alpha_n)}{\lambda+2\mu}
 \left[\left(\frac{R_n}{\alpha_n}\right)^\gp
 -\left(\frac{Q_n}{1-\alpha_n}\right)^\gm\right]^2
 \right\|_{L^1(Q_T)}\le C.
\end{align}
In particular,
\begin{equation}\label{mass-flux-bound}
 \|R_n\vu_n\|_{L^\infty(0,T;L^{2\gp/(\gp+1)}(\Omega))}
 +\|Q_n\vu_n\|_{L^\infty(0,T;L^{2\gm/(\gm+1)}(\Omega))}\le C.
\end{equation}
Both exponents are greater than one for $\gp,\gm>1$; thus the two mass fluxes are equi-integrable. 

Unlike the energy estimate the bounds for $\alpha_{n}$ in \eqref{fixed-n-bounds} do no longer guarantee that $\alpha_n$ is far away from the values $0$ and $1$. However, bu  \eqref{eta-choice}, $0<\eta(n)<\tfrac12<1$, and so Lemma~\ref{lem:alpha} gives
$0<\eta(n)\le\alpha_n\le1-\eta(n)<1$ on $[0,T]\times\Omega$; in particular 
$$0\le\alpha_n\le1$$ 
independently of $n$.
It is thus clear that $\alpha_n$ might touch the values $0$ and $1$ in the limit and it is therefore important to extend the definitions of all pressure-related terms that become singular at these values.
Let us denote
\begin{align*}
 \omega(s,r,q)&:=\frac{s(1-s)}{\lambda+2\mu}
 \big(s^{-\gp}r^\gp-(1-s)^{-\gm}q^\gm)\big),\\
 \mathcal R(s,r,q)&:=\frac{s(1-s)}{\lambda+2\mu}
 \big(s^{-\gp}r^\gp-(1-s)^{-\gm}q^\gm)\big)^2,
\end{align*}
and recall the definitions
\begin{align*}
 P(s,r,q)&=\frac{s^{1-\gp}r^\gp}{\gp-1}
 +\frac{(1-s)^{1-\gm}q^\gm}{\gm-1},\\
 p(s,r,q)&=s^{1-\gp}r^\gp+(1-s)^{1-\gm}q^\gm.
\end{align*}
Out of the four functions above, three last ones are nonnegative and can be extended to lower semicontinuous functions on $\mathbb{R}^3$ by assigning value $0$ at the ``compatible'' points $(0,0,q)$ and $(1,r,0)$ (where the phase occupying zero volume has a zero partial mass)  and value $+\infty$ at the incompatible points $(1,0,q)$ and $(0,r,0)$. For example, for $ \mathcal R$ we have
\begin{align}\label{R-lsc-ext}
    (s,r,q)\in\R^{3}\ \longmapsto\ \mathcal R(s,r,q)
=\begin{cases}
\dfrac{s(1-s)}{\lambda+2\mu}
\left[\left(\dfrac{r}{s}\right)^{\gp}-\left(\dfrac{q}{1-s}\right)^{\gm}\right]^{2}
& \text{if } 0<s<1,\ r,q\ge0,\\[10pt]
0 & \text{if } s=0,\ r=0,\ q\ge0,\\[4pt]
0 & \text{if } s=1,\ q=0,\ r\ge0,\\[4pt]
+\infty & \text{otherwise.}
\end{cases} .
\end{align}
The pressure potential $P(s,r,q)$ as well as the pressure $p(s,r,q)$ can be extended in the same way.
On the other hand, the relaxation term $ \omega(s,r,q)$ is sign-changing and does not
admit an $[0,+\infty]$-valued lower semicontinuous \ extension; it is instead controlled pointwisely by $\mathcal R(s,r,q)$, we have
\eq{\label{oR}
|\omega|\le\Big(\tfrac{s(1-s)}{\lambda+2\mu}\Big)^{1/2}\sqrt{\mathcal R}
\le\tfrac{1}{2\sqrt{\lambda+2\mu}}\sqrt{\mathcal R},
}
whence, by \eqref{uniform-n-bounds} it is equintegrable 
\eq{\label{bound_om}
\|\omega(\alpha_n,R_n,Q_n)\|_{L^2(Q_T)}\leq C.
}


\subsubsection{Passage to the limit in the equations for $R_n, Q_n$ and  $b(\alpha_n)$}

The sequence
$(\alpha_n,R_n,Q_n,\vu_n,\mathbb D(\vu_n))$ generates a parametrized Young
measure $\{\Nu_{t,x}\}$ on $\mathcal F$. The bounds
\eqref{uniform-n-bounds}--\eqref{mass-flux-bound} give the continuity
equations \eqref{mv eqn 1r}--\eqref{mv eqn 1q} without concentration
defects, and weak convergence in the integration-by-parts identity
\begin{align*}
    	-\inttauO{\vu_n\cdot\Div\mathbb M }=	\inttauO{ \mathbb{D}(\Grad \vu_n):\mathbb M},
\end{align*}
 for any $\displaystyle \mathbb{M} \in C^1(\bar{Q}_T; \mathbb{R}^{3\times 3}_{\text{sym}}) $,  gives \eqref{mv-gradient-compatibility}.

It remains to justify the passage to the limit in the renormalized equation
\begin{align}\label{ren_n}
    \pt b(\alpha_n)+\Div\lr{b(\alpha_n) \vu_n}- b(\alpha_n)\Div\vu_n
	=b'(\alpha_n)\,\omega(\alpha_n,R_n,Q_n),
\end{align}
in particular the behavior of the source term near
$\alpha=0,1$. 

It follows from \eqref{oR} that at  endpoints $(0,0,q)$ and $(1,r,0)$ where $\omega(s,r,q)$  is not defined a-priori, we have $\omega=0$. Indeed, since $\mathcal R=0$ on $\{s=0,r=0\}\cup\{s=1,q=0\}$ so is $\omega$.
Since $\lel\Nu_{t,x};\mathcal R(s,r,q)\ril<\infty$ for a.e.\ $(t,x)$, one has
$\Nu_{t,x}\big(\{s=0,r>0\}\cup\{s=1,q>0\}\big)=0$, so $\lel\Nu_{t,x};\omega(s,r,q)\ril$ is well
defined a.e..\\
Now, due to \eqref{bound_om} 
$b'(\alpha_n)\omega(\alpha_n,R_n,Q_n)$  is uniformly bounded in $L^2((0,T)\times\Omega)$ for any $b\in C^1([0,\infty)])$. Thus, we have
\[
b'(\alpha_n)\,\omega(\alpha_n,R_n,Q_n)\ \rightharpoonup\
\lel\Nu_{t,x};b'(s)\omega(r,s,q)\big)\ril
\qquad\text{weakly in }L^2((0,T)\times\Omega).
\]

On the other hand, the uniform bound
\[
\|b(\alpha_n)\Div\vu_n\|_{L^2_{t,x}}\le\|b(\alpha_n)\|_{L^\infty_{t,x}}\|\Div\vu_n\|_{L^2_{t,x}}\le C,
\]
implies that 
\eqh{b(\alpha_n)\Div\vu_n\rightharpoonup\lel\Nu_{t,x};b(s)\,\tr\mathbb{D}_{\vv}\ril\quad \text{ weakly in }L^2((0,T)\times\Omega).}
Thus, passing to the limit $n\to \infty$ in the weak formulation of \eqref{ren_n}, to verify \eqref{mv eqn 3}.


\subsubsection{The energy defects}

For completeness we record the lower-semicontinuity fact used for the
energy concentrations.
\begin{lemma}[Positive lower-semicontinuous defect]\label{lem:lsc-defect}
Let $Z_n$ generate the Young measure $\Nu$ and let
$f\colon\mathcal F\to[0,+\infty]$ be lower semicontinuous. If
$f(Z_n)\stackrel{*}{\rightharpoonup}\overline f$ in the space of Radon
measures, then
\[
 \overline f-\langle\Nu;f\rangle\,\dxdt\quad\text{is a nonnegative measure}.
\]
The same assertion holds for almost every time slice of an
$L^\infty$-in-time sequence.
\end{lemma}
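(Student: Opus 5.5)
The plan is to reduce the statement to the fundamental theorem on Young measures for a bounded continuous integrand, via a monotone approximation of $f$ from below by bounded continuous functions. Since $f\colon\mathcal F\to[0,+\infty]$ is lower semicontinuous on the metric space $\mathcal F$, we fix a nondecreasing sequence of bounded, nonnegative, continuous (indeed Lipschitz) functions $f_k\uparrow f$ pointwise on $\mathcal F$. A concrete choice is the truncated Moreau--Yosida (Baire) regularisation
\[
f_k(z)=\min\Bigl\{k,\ \inf_{y\in\mathcal F}\bigl(f(y)+k|z-y|\bigr)\Bigr\}.
\]
Each $f_k$ is $k$-Lipschitz and takes values in $[0,k]$. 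The sequence is nondecreasing in $k$, and $f_k\le f$. Lower semicontinuity of $f$ gives $f_k(z)\to f(z)$ for every $z$, including the points where $f(z)=+\infty$.

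Next we fix a nonnegative test function $\varphi\in C_c(I\times\Omega)$ and write $\int\varphi\,{\rm d}\overline f=\lim_n\int\varphi f(Z_n)$. Pointwise we have $f(Z_n)\ge f_k(Z_n)$, so for each fixed $k$
\[
\int\varphi\,{\rm d}\overline f\ \ge\ \limsup_{n\to\infty}\int\varphi\, f_k(Z_n)\,\dxdt .
\]
Because $f_k$ is bounded and continuous, the fundamental theorem of Young measures applies: $f_k(Z_n)\stackrel{*}{\rightharpoonup}\langle\Nu;f_k\rangle$ weakly-$*$ in $L^\infty$. No equi-integrability issue arises, since $f_k$ is bounded. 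Hence
\[
\int\varphi\,{\rm d}\overline f\ \ge\ \int\varphi\,\langle\Nu_{t,x};f_k\rangle\,\dxdt .
\]
We then let $k\to\infty$. Monotone convergence, applied first inside the probability measure $\Nu_{t,x}$ and then in $(t,x)$, yields
\[
\int\varphi\,{\rm d}\overline f\ \ge\ \int\varphi\langle\Nu;f\rangle\,\dxdt .
\]
In particular $\langle\Nu;f\rangle\in L^1_{\rm loc}$, because $\overline f$ is a Radon measure. Since $\varphi\ge0$ was arbitrary, $\overline f-\langle\Nu;f\rangle\,\dxdt$ is a nonnegative measure.

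The only point that needs care is constructing $f_k$ when $f$ takes the value $+\infty$. This occurs for the lower semicontinuous extensions of $P$, $p$ and $\mathcal R$ at incompatible endpoints such as \eqref{R-lsc-ext}. The truncation at level $k$ handles it: every $f_k$ is finite and bounded, and convergence to $+\infty$ still holds where $f=+\infty$. The construction also does not use compactness of $\mathcal F$, which is not compact, so we do not invoke any compactification of the phase space.

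For the time-slice version, take $f(Z_n(\tau,\cdot))\stackrel{*}{\rightharpoonup}\overline f(\tau)$ in $\mathcal M(\Ov\Omega)$ for a.e. $\tau$. The same argument is applied with test functions $\varphi(t,x)=\theta(t)\psi(x)$, where $\theta,\psi\ge0$. We use the $L^\infty$-in-time bound to pass from measures on $I\times\Omega$ to a.e. $\tau$ via Lebesgue points in time. Separability of $C(\Ov\Omega)$ allows us to choose a common null set of times.
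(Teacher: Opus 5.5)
Your proposal is correct and follows the paper's argument: approximate $f$ from below by bounded continuous $f_k\uparrow f$, use generation of $\Nu$ for each $f_k$ together with $f_k\le f$, and let $k\to\infty$ by monotone convergence. The explicit truncated Moreau--Yosida construction (which handles the value $+\infty$) and the treatment of time slices via product test functions, Lebesgue points and a countable dense family supply details the paper only cites from the literature. One small correction: the hypothesis available in the paper is weak-$*$ convergence in $L^\infty_{w*}(0,T;\mathcal M(\overline\Omega))$, not convergence for each slice, but your argument never actually uses slice-wise convergence.
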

\noindent
{When $f\colon\mathcal F\to[0,+\infty]$ is continuous, the above Lemma is proved by Feireisl et. al \cite[Lemma 2.1]{FG2W}.} For $f$ lower semicontinuous we write $f$ as the increasing supremum of bounded continuous
functions $f_k$. The definition of generation gives the limit for $f_k$,
while $f_k\le f$ gives
$\langle\Nu;f_k\rangle\,\dxdt\le\overline f$; monotone convergence as
$k\to\infty$ proves the claim. {For the details we refer to \cite[Chapter 5, Cor. 5.1.13 ]{FLMSBook} and \cite[Lemma 4.2]{C2021}.}

\noindent By \eqref{energy-n} we can extract subsequences such that
\begin{align*}
	&\Big[\tfrac12\vr_n|\vu_n|^2+P(\alpha_n,R_n,Q_n)\Big](t,\cdot)
	\xrightarrow{\,*\,}\Ov{E}
	&&\text{in }L^\infty_{w*}\big(0,T;\mathcal M(\overline\Omega)\big),\\
	&\vS(\mathbb{D}(\vu_n):\mathbb{D}(\vu_n)\xrightarrow{\,*\,}\Ov{\vS:\DD}
	&&\text{in }\mathcal M^+\big([0,T]\times\overline\Omega\big),\\
	&\tfrac{\alpha_n(1-\alpha_n)}{\lambda+2\mu}\big(\vrp_n^\gp-\vrm_n^\gm\big)^2
	\xrightarrow{\,*\,} \Ov{\mathcal R}
	&&\text{in }\mathcal M^+\big([0,T]\times\overline\Omega\big),
\end{align*}
with concentration defects (limit minus barycenter)
\begin{align*}
 \mathcal E_\infty(t)&:=\overline E(t)
 -\langle\Nu_{t,x};E(s,r,q,\vv)\rangle\,\dx,\\
 \sigma_\infty&:=\overline{\vS:\mathbb D}
 -\langle\Nu_{t,x};\vS(\mathbb D_\vv):\mathbb D_\vv\rangle\,\dxdt,\\
 \mathcal R_\infty&:=\overline{\mathcal R}
 -\langle\Nu_{t,x};\mathcal R(s,r,q)\rangle\,\dxdt.
\end{align*}
We want to use Lemma \ref{lem:lsc-defect} to show that those defects are non-negative. Indeed, as discussed before, thanks to extension \eqref{R-lsc-ext} of ${\cal R}(s,q,r)$ ad analogous extension of $P(s,q,r)$, both functions are lower semicontinuous on $\mathbb{R}^3$. The lower semicontinuity of the kinetic part of energy $\tfrac12(r+q)|\vv|^2$ and of the viscous dissipation $\vS(\mathbb D_\vv):\mathbb D_\vv$, is clear.
 Therefore, by Lemma \ref{lem:lsc-defect} we verify that
 \[
\mathcal E_\infty\ge0,\qquad \sigma_\infty\ge0,\qquad \mathcal R_\infty\ge0.
\]

This allows us to set the total nonnegative defect in energy inequality as
\[
\mathcal D(\tau):=\mathcal E_\infty(\tau)(\overline\Omega)
+\sigma_\infty\big([0,\tau]\times\overline\Omega\big)
+\mathcal R_\infty\big([0,\tau]\times\overline\Omega\big)\in L^\infty(0,T),\qquad \mathcal D\ge0.
\]

\medskip 

We can now pass to the limit in the energy inequality.  Due to \eqref{initial-recovery}, there is no initial defect: $\mathcal E_\infty(0)=0$, i.e.,\
$$\Ov{E}(0)(\overline\Omega)=\int_\Omega\lel\Nu_{0,x};\tfrac12(r+q)|\vv|^2+P(r,p,q)\ril\,dx.$$

Then passing to the limit in \eqref{energy-n} we get  for a.e.\ $\tau\in(0,T)$,
\begin{align*}
	&\intO{\lel\Nu_{\tau,x};\tfrac12(r+q)|\vv|^2+P(s,r,q)\ril}
	+\inttauO{\lel\Nu_{t,x};\tfrac{s(1-s)}{\lambda+2\mu}\big(p_+(r/s)-p_-(q/(1-s))\big)^2\ril}\\
	&\qquad+\inttauO{\lel\Nu_{t,x};\vS(\mathbb{D}_{\vv}):\mathbb{D}_{\vv}\ril}+\mathcal D(\tau)
	\le\intO{\lel\Nu_{0,x};\tfrac12(r+q)|\vv|^2+P(s,r,q)\ril}.
\end{align*}

\subsubsection{Passage to the limit in the momentum equation}

The tensor $\vr_n\vu_n\otimes\vu_n$ and the pressure $p(\alpha_n,R_n,Q_n)$ are bounded
only in $L^\infty(I;L^1(\Omega))$, the non-reflexive endpoint, hence concentrate in the
limit. Up to a subsequence, we have
\eqh{
&\vr_n\vu_n\otimes\vu_n\xrightarrow{\,*\,} \Ov{\vr\vu\otimes\vu},\\
& p(\alpha_n,R_n,Q_n) \xrightarrow{\,*\,} \Ov{p(\alpha,R,Q)},
}
in $L^\infty_{w*}(0,T;\mathcal M(\overline\Omega))$.

The \emph{momentum concentration defect} is defined as 
\begin{align*}
r^M:=&\Big(\Ov{\vr\vu\otimes\vu}-\lel\Nutx;(r+q)\vv\otimes\vv\ril\Big)
+\Big(\Ov{p(\alpha,R,Q)}-\lel\Nutx;p(s,r,q)\ril\Big)\mathbb I
\ \\
&\in\ L^\infty_{w*}\big(0,T;\mathcal M(\overline\Omega;\R^{d\times d})\big).
\end{align*}
Using this definition, and observation that the rest of the integrands in \eqref{Sue:2} is bounded in $L^p((0,T)\times\Omega)$ with some $p>1$, we can pass to the limit in \eqref{Sue:2} and obtain \eqref{mv eqn 2}.

\smallskip
\noindent We now want to show that $r^M$ can be dominated by the energy defect $\mathcal D(\tau)$.
\begin{itemize}
    \item Since $|(r+q)\vv\otimes\vv|\le2E(s,r,q,\vv)$, the standard concentration comparison
lemma \cite[Lemma~2.1]{FG2W} controls the kinetic part of $r^M$ by
$C\mathcal E_\infty$.
\item  For the pressure term set
$C_p=\max\{\gp-1,\gm-1\}$. Both $p$ and $C_pP-p$ are nonnegative lower
semicontinuous functions. Lemma~\ref{lem:lsc-defect} therefore gives
\[
 0\le\overline p-\langle\Nu;p(s,r,q)\rangle
 \le C_p\big(\overline P-\langle\Nu;P(s,r,q)\rangle\big)\leq C_p E_\infty.
\]
Here $\Ov{p},\ \overline P$ denotes the weak--$*$ measure limit of $p(\alpha_n,R_n,Q_n)$,
$P(\alpha_n,R_n,Q_n)$, respectively.
\end{itemize}

Consequently, for almost every $t$,
\begin{equation}\label{rM-energy-control}
 |r^M(t)|(\overline\Omega)\le C\mathcal E_\infty(t)(\overline\Omega)
 \le C\mathcal D(t),
\end{equation}
which is the compatibility condition \eqref{mom-def}. 

It remains to verify \eqref{mv-poincare-korn}. Fix
$\vw\in L^2(0,T;W^{1,2}_0(\Omega;\R^3))$. The sequence
$(\vu_n,\mathbb D\vu_n)$ generating $\Nu$ is bounded in $L^2$, and for every
$1<\beta<2$ the conformal Korn--Poincar\'e inequality gives
\begin{align*}
 \inttauO{|\vu_n-\vw|^\beta+|\mathbb D\vu_n-\mathbb D\vw|^\beta}
 \leq C_\beta\inttauO{
 |\mathbb T(\mathbb D\vu_n)-\mathbb T(\nabla\vw)|^\beta}.
\end{align*}
Both integrands have subquadratic growth. Therefore the fundamental theorem
for Young measures applies without a concentration measure, and passage to
$n\to\infty$ yields the corresponding inequality with the two integrands
replaced by their $\Nu_{t,x}$-averages. Letting $\beta\nearrow2$ gives
\eqref{mv-poincare-korn}. This is the argument of
\cite[Lemma~2.2]{BFN}, combined with the full $L^\beta$ conformal Korn
estimate \cite{LewintanNeff21}; the preliminary choice $\beta<2$ prevents an
additional quadratic concentration defect from appearing on the right-hand
side.

This finishes the proof of Theorem \ref{t:mv}. $\Box$

\medskip

We conclude this section with a few remarks.

\begin{rmk}
Let $\pi_s\colon\mathcal F\to[0,1]$, $(s,r,q,\vv,\mathbb{D}_{\vv})\mapsto s$, be the
projection onto the volume-fraction variable, with pushforward
$(\pi_s)_{\#}\Nu_{t,x}\in\mathcal P([0,1])$ characterized by
$$\langle(\pi_s)_{\#}\Nu_{t,x};g\rangle=\langle\Nu_{t,x};g\circ\pi_s\rangle$$ for
$g\in C([0,1])$. Since $s\in[0,1]$ on $\mathcal F$, this measure is supported in
$[0,1]$, and its barycenter
$$\alpha=\lel\Nu_{t,x};s\ril=\int_{[0,1]}s\,d(\pi_s)_{\#}\Nu_{t,x}$$ inherits
$0\le\alpha\le1$ a.e., so $\|\alpha\|_{L^\infty(Q_T)}\le1$.
\end{rmk}

\begin{rmk}
Relation between $p$ and $P$ plays a similar role to relation between $\omega$ and ${\cal R}$. In particular, the uniform
$L^1$-bound on the potential $P$, inherited in the limit (recall that 
$\lel\Nu_{t,x};P(r,s,q)\ril\in L^\infty_{w*}(0,T;\mathcal M(\overline\Omega))$) enforces 
\[
\Nu_{t,x}\big(\{s=0,\,r>0\}\big)=\Nu_{t,x}\big(\{s=1,\,q>0\}\big)=0
\qquad\text{for a.e. }(t,x).
\]
Thus $s=0\Rightarrow r=0$ and $s=1\Rightarrow q=0$ $\Nu_{t,x}$-a.s., and the
barycentric pressure $\lel\Nu_{t,x};p(s,r,q)\ril$ is well defined.
\end{rmk}

\section{Derivation of the relative entropy inequality}\label{Sec:MV-WS}

In this section we explain how the relative entropy \eqref{rel_entr}  can be generalized for  the measure-valued solutions introduced in Definition \ref{MVdef}.
Recall that our dummy variables are $(s,r,q,\vv,\mathbb{D}_\vv)$ along with the notation \eqref{Psrq}, \eqref{npnm}, \eqref{HpHm}.

For a measure-valued solution $(\Nu,{\mathcal D})$ and any sufficiently smooth reference state $(\beta,\BR,\BQ,\vw)$ satisfying \eqref{reference}, we define

\eq{\label{mv_rel_entr_b}
&{\cal E}_{mv}(\Nu\,|\,\beta,\BR,\BQ,\vw)(\tau)\\
&:=\intO{\lel \Nu_{\tau,x};
\frac12(r+q)|\vv-\vw|^2\ril}+\intO{\lel\Nu_{\tau,x}; (s-\beta)^2\ril}\\
&\quad+\intO{\lel \Nu_{\tau,x};
 s\Big(\Hp(\np)-\Hp'(\bvrp)(\np-\bvrp)-\Hp(\bvrp)\Big)\ril}\\
&\quad+\intO{\lel \Nu_{\tau,x};
 (1-s)\Big(\Hm(\nm)-\Hm'(\bvrm)(\nm -\bvrm)-\Hm(\bvrm)\Big)\ril}.
}
Notice that this simply means that the new relative entropy is related to \eqref{rel_entr} through
\eqh{
{\cal E}_{mv}(\tau)=\intO{\lel \Nu_{\tau,x};E(s,r,q,\vv|\beta,\BR,\BQ,\vw)\ril}.
}

Our first goal will be to derive some general formulation for part of the relative entropy excluding the volume-fraction component $\intO{\lel\Nu_{\tau,x}; (s-\beta)^2\ril}$.
\subsection{Relative entropy-preliminaries}
In this section we consider
\eq{\label{mv_rel_entr0}
&\bar{\cal E}_{mv}(\Nu\,|\,\beta,\BR,\BQ,\vw)(\tau)\\
&:=\intO{\lel \Nu_{\tau,x};
\frac12(r+q)|\vv-\vw|^2\ril}\\
&\quad+\intO{\lel \Nu_{\tau,x};
 s\Big(\Hp(\np)-\Hp'(\bvrp)(\np-\bvrp)-\Hp(\bvrp)\Big)\ril}\\
&\quad+\intO{\lel \Nu_{\tau,x};
 (1-s)\Big(\Hm(\nm)-\Hm'(\bvrm)(\nm -\bvrm)-\Hm(\bvrm)\Big)\ril}.
}
Notice that this is not exactly \eqref{mv_rel_entr_b}  as the   term $\intO{\lel\Nu_{\tau,x}; (s-\beta)^2\ril}$ is still missing and will be added later on.

We first expand \eqref{mv_rel_entr0} as follows: 
\eq{\label{mv_rel_entr}
&\bar{\cal E}_{mv}(\Nu\,|\,\beta,\BR,\BQ,\vw)(\tau)\\
&:=\intO{\lel \Nu_{\tau,x};
\frac12(r+q)|\vv|^2+s\Hp(\np)+(1-s)\Hm(\nm)\ril}\\
& \quad 
+\intO{\frac12(R+Q)|\vw|^2}\\
&\quad-\intO{\lel \Nu_{\tau,x};(r+q)\vv\ril \cdot \vw}\\
& \quad 
-\intO{R \Hp'(\bvrp)
}+\intO{\alpha\lr{\Hp'(\bvrp)\bvrp-\Hp(\bvrp)}}\\
& \quad 
-\intO{Q \Hm'(\bvrm)
}+\intO{(1-\alpha)\lr{\Hm'(\bvrm)\bvrm-\Hm(\bvrm)}}.
}
We now estimate each of the lines of this expression.

\medskip 
\noindent
{\it Step 1. } The first line is controlled by the energy inequality in Definition \ref{MVdef}, i.e.
		\begin{align*}
			\begin{split}
				&\int_{ \Omega} \lel \Nu_{t ,x}; \frac{1}{2}  (r+q)|\vv|^2 + P(s,r,q) \ril \dx\Big|_0^\tau\\ 
&\leq-\frac{1}{\lambda+2\mu}\inttauO{\lel \Nu_{t,x}; s(1-s)(p_{+}(\np) -p_{-}(\nm))^2\ril }\\
       &\quad-\inttauO{\lel\Nu_{t,x};\vS(\mathbb{D}_{\vv}): \mathbb{D}_{\vv}\ril}- \mathcal{D}(\tau) 
			\end{split}
		\end{align*}
\medskip

\noindent
{\it Step 2. } We sum the equations for $Q$ and $R$ and test the obtained result by $\frac12|\vw|^2$
\eq{\label{mv rhov2}
\intO{\frac12(R+Q)|\vw|^2}\Big|_0^\tau =
\inttauO{(R+Q)\vw\cdot\pt\vw}+\inttauO{\lel \Nu_{\tau ,x};(r+q)\vv\ril\cdot\Grad\vw\cdot \vw}.
} 

\noindent
{\it Step 3. } We take $\vw$ as a test function in the momentum equation \eqref{mv eqn 2}, and integrate by parts to get
\eq{\label{mv uv}
&-\intO{\lel \Nu_{t ,x};(r+q)\vv\ril\cdot\vw}\Big|_0^\tau\\
&=-\inttauO{\lel \Nu_{t ,x};(r+q)\vv\ril\cdot\pt\vw} -\inttauO{\lel \Nu_{t ,x};(r+q)\vv\otimes\vv\ril:\Grad\vw} \\
&
\quad-
\inttauO{\lel \Nu_{t ,x};p(s,r,q)\ril\Div\vw}\\
& \quad+ \inttauO{\lel\Nu_{t,x};\vS(\mathbb{D}_{\vv}):  \Grad \vw\ril}- \lel r^M;\Grad \vw\ril_{\mathcal{M}([0,\tau]\times \overline{\Omega}), C([0,\tau]\times \overline{\Omega})}.
}


\noindent
{\it Step 4.} Finally, we test the equation for $R$ by $\Hp'(\bvrp)$ and the equation for $Q$ by $\Hm'(\bvrm)$ to get
\eq{   \label{mv step4+}
-\inttauO{R\Hp'(\bvrp)}\Big|_0^\tau=-\inttauO{ \left[ R  \partial_{t}\Hp'(\bvrp) + \lel \Nu_{t,x};  r\vv\ril \cdot \Grad \Hp'(\bvrp)\right] }
}
\eq{   \label{mv step4-}
-\inttauO{Q\Hm'(\bvrm)}\Big|_0^\tau=-\inttauO{ \left[ Q  \partial_{t}\Hm'(\bvrm) + \lel \Nu_{t,x};  q\vv\ril \cdot \Grad \Hm'(\bvrm)\right] }.
}
\noindent
{\it Step 5.} Using that $\Hp'(\bvrp)\bvrp-\Hp(\bvrp)=\Pp(\bvrp)$, we obtain
\begin{align*}
\intO{\alpha\lr{\Hp'(\bvrp)\bvrp-\Hp(\bvrp)}}\Big|_0^\tau=\intO{\alpha\Pp(\bvrp)}\Big|_0^\tau.
\end{align*}
In a similar manner
\begin{align*}
\intO{(1-\alpha)\lr{\Hm'(\bvrm)\bvrm-\Hm(\bvrm)}}\Big|_0^\tau
=
\intO{(1-\alpha)\Pm(\bvrm)}\Big|_0^\tau
.
\end{align*}

\noindent
{\it Step 6.} Summing the above expressions 
and adding to both sides $-\inttauO{\lel \Nu_{\tau ,x};\vS(\mathbb{D}{\vw}):(\mathbb{D}_{\vv}-\mathbb{D}{\vw})\ril}$, we get all together:
\eq{      \label{mv sumstep1b}
&\bar{\cal E}_{m,v}(\Nu|\, \beta ,\BR,\BQ,\vw)\Big|_0^\tau+\inttauO{\lel\Nu_{t,x};\vS(\mathbb{D}_{\vv}-\mathbb{D}{\vw}):(\mathbb{D}_{\vv}-\mathbb{D}{\vw})\ril}+\mathcal{D}(\tau)\\
&+\frac{1}{\lambda+2\mu}\inttauO{\lel \Nu_{t,x}; s(1-s)(p_{+}(\np) -p_{-}(\nm))^2\ril }\\
&\leq \inttauO{(R+Q)\vw\cdot\pt\vw}+\inttauO{\lel \Nu_{\tau ,x};(r+q)\vv\ril\cdot\Grad\vw \cdot \vw}\\
&-\inttauO{\lel \Nu_{t ,x};(r+q)\vv\ril\cdot\pt\vw} -\inttauO{\lel \Nu_{t ,x};(r+q)\vv\otimes\vv\ril:\Grad\vw} \\
&
+ \inttauO{\lel \Nu_{\tau ,x};\vS(\mathbb{D}{\vw}):(\mathbb{D}{\vw}-\mathbb{D}_{\vv})\ril}-
\inttauO{\lel \Nu_{t,x};p(s,r,q)\ril\Div\vw}\\
%
&-\inttauO{ \left[ R  \partial_{t}\Hp'(\bvrp) + \lel \Nu_{t,x};  r\vv\ril \cdot \Grad \Hp'(\bvrp)\right] }\\
&-\inttauO{ \left[ Q  \partial_{t}\Hm'(\bvrm) + \lel \Nu_{t,x};  q\vv\ril \cdot \Grad \Hm'(\bvrm)\right] }\\
&+\intO{\lr{\alpha\Pp(\bvrp)+(1-\alpha)\Pm(\bvrm)}}\Big|_0^\tau\\
&-\lel r^M;\Grad \vw\ril_{\mathcal{M}([0,\tau]\times \overline{\Omega}), C([0,\tau]\times \overline{\Omega})}\\
&=I_1+\ldots I_{10}.
}
Note that, using \eqref{mv eqn 3}, we can  write
\eq{\label{I9}
	I_9=&\intO{\lr{\alpha\Pp(\bvrp)+(1-\alpha)\Pm(\bvrm)}}\Big|_0^\tau\\
				 =& \inttauO{ \left[ \alpha  \partial_{t}\Pp(\bvrp) + \lel \Nu_{t,x};  s\vv\ril \cdot \Grad \Pp(\bvrp) +\lel \Nu_{t,x};  s\,\tr\mathbb{D}_\vv\ril \Pp(\bvrp)\right]}\\
                &\quad+\inttauO{\lel \Nu_{t,x}; \frac{s(1-s)}{\lambda+2\mu}(p_{+}(r/s)-p_{-}(q/(1-s)))\ril \Pp(\bvrp)}\\
                &+ \inttauO{ \left[ (1-\alpha)  \partial_{t}\Pm(\bvrm) + \lel \Nu_{t,x};  (1-s)\vv\ril \cdot \Grad \Pm(\bvrm) +\lel \Nu_{t,x};  (1-s)\tr\mathbb{D}_\vv\ril \Pm(\bvrm)\right]}\\
                &\quad-\inttauO{\lel \Nu_{t,x}; \frac{s(1-s)}{\lambda+2\mu}(p_{+}(r/s)-p_{-}(q/(1-s)))\ril \Pm(\bvrm)}.
}
Using repeatedly the relation $\vr_\pm H_\pm''(\vr_\pm)=p_\pm'(\vr_\pm)$, we  first get
\eqh{
\Grad \Hp'(\bvrp)=\frac{\Grad\Pp(\bvrp)}{\bvrp},\quad \Grad \Hm'(\bvrm)=\frac{\Grad\Pm(\bvrm)}{\bvrm},
}
and similarly
\eqh{
\pt\Hp'(\bvrp)=\frac{\pt\Pp(\bvrp)}{\bvrp},\quad \pt \Hm'(\bvrm)=\frac{\pt\Pm(\bvrm)}{\bvrm},
}
\eq{      \label{mv sum1}
&\bar{\cal E}_{m,v}(\Nu|\, \beta ,\BR,\BQ,\vw)\Big|_0^\tau+\inttauO{\lel\Nu_{t,x};\vS(\mathbb{D}_{\vv}-\mathbb{D}{\vw}):(\mathbb{D}_{\vv}-\mathbb{D}{\vw})\ril}+\mathcal{D}(\tau)\\
&+\frac{1}{\lambda+2\mu}\inttauO{\lel \Nu_{t,x}; s(1-s)(p_{+}(\np) -p_{-}(\nm))^2\ril }\\
&\leq 
\inttauO{\lel \Nu_{t ,x};(r+q)(\vw-\vv)\ril\cdot\pt\vw} +\inttauO{\lel \Nu_{t ,x};(r+q)(\vw-\vv)\otimes\vv\ril:\Grad\vw} \\
&
+ \inttauO{\vS(\mathbb{D}\vw):(\mathbb{D}\vw-\mathbb{D}\vu)}-
\inttauO{\lel \Nu_{t,x};p(s,r,q)\ril\Div\vw}\\
%
&+\inttauO{ \left[\lr{\alpha-\frac{R}{\bvrp}}  \Pp'(\bvrp)\partial_{t}\bvrp + \lel \Nu_{t,x};  \lr{s\vv\bvrp-r\vv}\ril \cdot \frac{\Pp'(\bvrp)}{\bvrp}\Grad\bvrp\right] }\\
&+\inttauO{ \left[\lr{(1-\alpha)-\frac{Q}{\bvrm}}  \Pm'(\bvrm)\partial_{t}\bvrm + \lel \Nu_{t,x};  \lr{(1-s)\vv\bvrm-q\vv}\ril \cdot \frac{\Pm'(\bvrm)}{\bvrm}\Grad\bvrm\right] }\\
&+ \inttauO{ \left[\lel \Nu_{t,x};  s\,\tr\mathbb{D}_\vv\ril \Pp(\bvrp)+\lel \Nu_{t,x};  (1-s)\tr\mathbb{D}_\vv\ril \Pm(\bvrm)\right]}\\
                &+\inttauO{\lel \Nu_{t,x}; \frac{s(1-s)}{\lambda+2\mu}(p_{+}(r/s)-p_{-}(q/(1-s)))\ril \Pp(\bvrp)}\\
                &-\inttauO{\lel \Nu_{t,x}; \frac{s(1-s)}{\lambda+2\mu}(p_{+}(r/s)-p_{-}(q/(1-s)))\ril \Pm(\bvrm)}\\
                &-\lel r^M;\Grad \vw\ril_{\mathcal{M}([0,\tau]\times \overline{\Omega}), C([0,\tau]\times \overline{\Omega})}.
}

Note that in the current notation we have
\begin{align*}
   & \inttauO{\lel\Nu_{t,x};\vS(\mathbb{D}_{\vv}-\mathbb{D}\vw): \lr{\mathbb{D}_{\vv}- \mathbb{D}\vw  }\ril} \\ 
   & = 2 \mu \inttauO{\lel\Nu_{t,x};|\mathbb{T}(\mathbb{D}_{\vv})-\mathbb{T}(\mathbb{D}\vw)|^2 \ril} + \xi \inttauO{\lel\Nu_{t,x};| \tr\mathbb{D}_{\vv} - \Div \vw |^2 \ril}
\end{align*}
and $ \lambda+\frac23\mu = \xi $.

\subsection{Reduction for strong solution}
We now want to narrow down the set of the test functions. From now on, $(\beta,\BR,\BQ,\vw)$ will solve the equations of system \eqref{S} pointwisely, i.e., for $\widetilde{R}= \beta \widetilde{\varrho_+}$ and $\widetilde{Q}= (1-\beta) \widetilde{\varrho_-}$ we have
\begin{subequations}\label{reg_spec}
\begin{align}
&\ptb{\beta\bvrp}
+
\Div(\beta\bvrp\vw)=0,\label{reg_spec_1}\\
&\ptb{(1-\beta)\bvrm}
+
\Div((1-\beta)\bvrm\vw)=0,\label{reg_spec_2}\\
&\ptb{(\beta\bvrp
+(1-\beta)\bvrm)\vw}
+\Div((\beta\bvrp+(1-\beta)\vrm)\vw\otimes\vw)\nonumber\\
&\qquad\qquad\qquad\qquad\qquad
+
\Grad (\beta p_{+}(\bvrp) + (1-\beta)p_{-}(\bvrm))=\Div \vS(\Grad\vw),\label{reg_spec_3}\\
&\pt\beta+\vw\cdot\Grad\beta=\frac{\beta(1-\beta)}{\lambda+2\mu}(p_{+}(\bvrp)-p_{-}(\bvrm)).\label{reg_spec_4}
\end{align}
\end{subequations}

We will prove the following result
\begin{lemma}\label{lem:mv-relative-ineq}
Let $(\Nu,{\cal D})$ be a measure--valued solution in the sense of Definition \ref{MVdef}.  Let $(\beta,\bvrp,\bvrm,\vw)$ be a strong solution on $[0,T]$ such that
\[
        0<\underline \beta\leq \beta\leq \overline \beta<1,
        \qquad
        0<\underline\rho\leq \bvrp,\bvrm\leq \overline\rho,
\]

and with the regularity stated in Remark \ref{r:stronger_reg}.  Then, for a.a. $\tau\in(0,T)$,
\eq{\label{mv_sum_fin}
&\bar{\cal E}_{mv}\Big|_0^\tau+{\cal D}(\tau)
+\inttauO{\lel\Nu_{t,x};\vS(\mathbb{D}_{\vv}-\mathbb{D}\vw): \lr{\mathbb{D}_{\vv}- \mathbb{D}\vw  }\ril}\\
&\quad+\frac{1}{\lambda+2\mu}
\inttauO{\lel\Nu_{t,x};s(1-s)
\Big(\Pp(\np)-\Pm(\nm)-\Pp(\bvrp)+\Pm(\bvrm)\Big)^2\ril}\\
&\le{\cal R}_{mv}(\tau),
}
where the remainder is given by 
\eq{\label{mv_remainder}
{\cal R}_{mv}=&
\inttauO{\lel \Nu_{t ,x};(r+q-\BR-\BQ)(\vw-\vv)\ril\cdot\pt\vw}\\
&+\inttauO{\lel \Nu_{t ,x};((r+q)\vv-(\BR+\BQ)\vw)\cdot\Grad\vw\cdot(\vw-\vv)\ril} \\
&-
\inttauO{\lel \Nu_{t,x};s\lr{\Pp(\np)-\Pp'(\bvrp)(\np-\bvrp)-\Pp(\bvrp)}\ril\Div\vw}\\
&-
\inttauO{\lel \Nu_{t,x};(1-s)\lr{\Pm(\nm)-\Pm'(\bvrm)(\nm-\bvrm)-\Pm(\bvrm)}\ril\Div\vw}\\
&+\inttauO{ \lel \Nu_{t,x};  \lr{s\bvrp-r}(\vv-\vw)\ril \cdot \frac{\Pp'(\bvrp)}{\bvrp}\Grad\bvrp }\\
&+\inttauO{ \lel \Nu_{t,x};  \lr{(1-s)\bvrm-q}(\vv-\vw)\ril \cdot \frac{\Pm'(\bvrm)}{\bvrm}\Grad\bvrm }\\
&+ \inttauO{ \lel \Nu_{t,x};  (s-\beta)(\tr\mathbb{D}_\vv-\Div\vw)\ril \lr{\Pp(\bvrp)-\Pm(\bvrm)}}\\
&+\inttauO{\lel\Nu_{t,x};
\frac{s(1-s)(\Pp(\bvrp)-\Pm(\bvrm))}{\lambda+2\mu}
\Big[\Pp(\bvrp)-\Pp'(\bvrp)(\bvrp-\np)-\Pp(\np)\Big]\ril}\\
&-\inttauO{\lel\Nu_{t,x};
\frac{s(1-s)(\Pp(\bvrp)-\Pm(\bvrm))}{\lambda+2\mu}
\Big[\Pm(\bvrm)-\Pm'(\bvrm)(\bvrm-\nm)-\Pm(\nm)\Big]\ril}\\
&+\inttauO{\lel\Nu_{t,x};
\frac{s(\bvrp-\np)\Pp'(\bvrp)(\beta-s)(\Pp(\bvrp)-\Pm(\bvrm))}{\lambda+2\mu}\ril}\\
&+\inttauO{\lel\Nu_{t,x};
\frac{(1-s)(\bvrm-\nm)\Pm'(\bvrm)(\beta-s)(\Pp(\bvrp)-\Pm(\bvrm))}{\lambda+2\mu}\ril}\\
&-\lel r^M;\Grad\vw\ril_{\mathcal M([0,\tau]\times\overline{\Omega}),C([0,\tau]\times\overline{\Omega})} \\
=&J_1+\ldots J_{12}.
}
\end{lemma}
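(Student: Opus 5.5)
We start from inequality \eqref{mv sum1}, which holds for an arbitrary smooth reference state, and insert the strong-solution identities \eqref{reg_spec}. The idea is to rewrite every term on the right-hand side of \eqref{mv sum1} as the sum of two kinds of contributions. The first kind is quadratic in the differences $(s-\beta,\ \np-\bvrp,\ \nm-\bvrm,\ \vv-\vw)$; these are exactly the terms collected in ${\cal R}_{mv}$. The second kind consists of terms that cancel, or that produce the dissipative quantity on the left-hand side of \eqref{mv_sum_fin}.

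\textbf{Momentum and convective terms.} We rewrite \eqref{reg_spec_3} in nonconservative form, using \eqref{reg_spec_1}--\eqref{reg_spec_2}:
\[
(\BR+\BQ)(\pt\vw+\vw\cdot\Grad\vw)+\Grad\tilde p=\Div\vS(\Grad\vw),
\]
where $\tilde p=\beta\Pp(\bvrp)+(1-\beta)\Pm(\bvrm)$. We test this identity by $\lel\Nu;(r+q)\ril$-weighted and unweighted combinations of $(\vw-\vv)$. We then add and subtract $(\BR+\BQ)(\vw-\vv)\cdot(\pt\vw+\vw\cdot\Grad\vw)$ in the first two lines of \eqref{mv sum1}. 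This produces $J_1$ and $J_2$, together with two further pieces. One is $\int\lel\Nu;\vw-\vv\ril\cdot(\Grad\tilde p-\Div\vS(\Grad\vw))$. The other is the viscous term, which after the gradient compatibility \eqref{mv-gradient-compatibility} (with $\mathbb M=\vS(\mathbb D\vw)$) cancels exactly against $\int\vS(\mathbb D\vw):(\mathbb D\vw-\mathbb D\vu)$.

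\textbf{Pressure terms.} We expand
\[
\Grad\tilde p=\Grad\beta\,(\Pp(\bvrp)-\Pm(\bvrm))+\beta\Pp'(\bvrp)\Grad\bvrp+(1-\beta)\Pm'(\bvrm)\Grad\bvrm.
\]
We integrate the contribution $-\int\lel\Nu;\vv-\vw\ril\cdot\Grad\tilde p$ by parts and combine it with the terms of \eqref{mv sum1} containing $\partial_t\bvrp$, $\Grad\bvrp$, $\lel\Nu;s\,\tr\mathbb D_\vv\ril\Pp(\bvrp)$ and $-\lel\Nu;p\ril\Div\vw$. For the time derivatives $\partial_t\bvrp$ we use the phase equations in the form
\[
\beta(\pt\bvrp+\vw\cdot\Grad\bvrp)=-\beta\bvrp\Div\vw-\bvrp\,\omega(\beta,\BR,\BQ),
\]
obtained from \eqref{reg_spec_1} and \eqref{reg_spec_4}, and analogously for the minus phase. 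After regrouping around the Taylor expansion of $\Pp$ at $\bvrp$, the terms multiplying $\Div\vw$ become $J_3$ and $J_4$, and the $\Grad\bvrp$ terms become $J_5$ and $J_6$. The leftover factor $(s-\beta)$ multiplying $(\Pp(\bvrp)-\Pm(\bvrm))$ pairs with $\tr\mathbb D_\vv-\Div\vw$, which gives $J_7$. Here the $\Grad\beta$ piece is handled using $\int(s-\beta)\vw\cdot\Grad(\Pp-\Pm)$ and \eqref{reg_spec_4}, so that it feeds the relaxation terms.

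\textbf{Relaxation terms (main obstacle).} These are the three kinds of contributions involving $\omega$:
\begin{itemize}
\item the nonnegative dissipation $\frac{1}{\lambda+2\mu}\lel\Nu;s(1-s)(\Pp(\np)-\Pm(\nm))^2\ril$ on the left;
\item the source terms $\lel\Nu;\omega\ril(\Pp(\bvrp)-\Pm(\bvrm))$ coming from $I_9$;
\item the terms $\omega(\beta,\cdot)$ produced through $\partial_t\bvrp$ in the previous step, which carry weights $(\alpha-R/\bvrp)\Pp'(\bvrp)/\beta$.
\end{itemize}
We set $A=\Pp(\np)-\Pm(\nm)$ and $\tilde A=\Pp(\bvrp)-\Pm(\bvrm)$, and complete the square:
\[
s(1-s)A^2-2s(1-s)A\tilde A+s(1-s)\tilde A^2=s(1-s)(A-\tilde A)^2,
\]
which is the dissipation term appearing on the left of \eqref{mv_sum_fin}. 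To close the square we must produce $-2s(1-s)A\tilde A+s(1-s)\tilde A^2$ from the source terms. We do this by replacing $\Pp(\np)$ with its linearization $\Pp(\bvrp)+\Pp'(\bvrp)(\np-\bvrp)$ plus the Taylor remainder, which generates $J_8$ and $J_9$. The mismatch between the weights $s$ and $\beta$ in the linear part yields $J_{10}$ and $J_{11}$, where we use $s\bvrp-r=s(\bvrp-\np)$.

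The delicate points are purely algebraic, namely getting the signs and the $\beta$ versus $s$ weights consistent in this completion of the square. Division by $\beta$ and $1-\beta$ is harmless thanks to $\underline\beta\le\beta\le\overline\beta$. At the endpoints $s\in\{0,1\}$ the ratios $\np,\nm$ appear only multiplied by $s$ or $1-s$, and finiteness of $\lel\Nu;\mathcal R\ril$ and of $\lel\Nu;P\ril$ excludes incompatible points, so every integrand is $\Nu$-integrable. Finally, $J_{12}=-\lel r^M;\Grad\vw\ril$ is carried over unchanged.
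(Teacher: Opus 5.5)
Your plan is essentially the paper's proof. You start from \eqref{mv sum1}, substitute the equations for $\bvrp,\bvrm$ derived from \eqref{reg_spec_1}, \eqref{reg_spec_2} and \eqref{reg_spec_4}, and subtract the nonconservative strong momentum equation tested by $\lel\Nu;\vw-\vv\ril$, which gives $J_1,J_2$ and cancels the viscous cross term through \eqref{mv-gradient-compatibility}. You then pair the trace terms into $J_7$ and complete the square in the relaxation terms: the Taylor remainders give $J_8,J_9$, and the weight mismatch $(1-s)-(1-\beta)=\beta-s$ gives $J_{10},J_{11}$.

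Two bookkeeping slips need fixing.

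\textbf{Sign of the pressure contribution.} It is $+\int\lel\Nu;\vv-\vw\ril\cdot\Grad\tilde p$, not its negative. With your sign, the viscous term would double instead of cancel, and $J_7$ would not carry the factor $s-\beta$.

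\textbf{The $\Grad\beta$ piece.} Integrate this pressure term by parts as a whole, via \eqref{mv-gradient-compatibility} with $\mathbb M=\tilde p\,\mathbb I$. This gives $-\int\tilde p\,\lel\Nu;\tr\mathbb D_{\vv}-\Div\vw\ril$ directly, so no $\Grad\beta$ piece survives. Consequently \eqref{reg_spec_4} enters only through the equations for $\bvrp,\bvrm$, and it does not feed any additional relaxation terms.
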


\pf First, it follows from \eqref{reg_spec} that $\bvrp,\bvrm$ satisfy the following equations
\eq{
&\pt{\bvrp}
+
\Div(\bvrp\vw)=-\bvrp\frac{(1-\beta)}{\lambda+2\mu}(p_{+}(\bvrp)-p_{-}(\bvrm)),\\
&\pt{\bvrm}
+
\Div(\bvrm\vw)=\bvrm\frac{\beta}{\lambda+2\mu}(p_{+}(\bvrp)-p_{-}(\bvrm)).
}
Plugging this into \eqref{mv sum1}, we obtain (recall the notation $n_+ = \frac{r}{s}$ and $n_- = \frac{q}{1-s}$)  
\eq{      \label{mv sum2}
&\bar{\cal E}_{m,v}(\Nu|\, \beta ,\BR,\BQ,\vw)\Big|_0^\tau+\inttauO{\lel\Nu_{t,x};\vS(\mathbb{D}_{\vv}-\mathbb{D}{\vw}):(\mathbb{D}_{\vv}-\mathbb{D}{\vw})\ril}+\mathcal{D}(\tau)\\
&+\frac{1}{\lambda+2\mu}\inttauO{\lel \Nu_{t,x}; s(1-s)(p_{+}(\np) -p_{-}(\nm))^2\ril }\\
&\leq \inttauO{\lel \Nu_{t ,x};(r+q)(\vw-\vv)\ril\cdot\pt\vw} +\inttauO{\lel \Nu_{t ,x};(r+q)(\vw-\vv)\otimes\vv\ril:\Grad\vw} \\
&
+ \inttauO{\vS(\DD\vw):(\DD\vw-\DD\vu)}\\
&-
\inttauO{\lel \Nu_{t,x};s\lr{\Pp(\np)-\Pp'(\bvrp)(\np-\bvrp)-\Pp(\bvrp)}\ril\Div\vw}\\
&-
\inttauO{\lel \Nu_{t,x};(1-s)\lr{\Pm(\nm)-\Pm'(\bvrm)(\nm-\bvrm)-\Pm(\bvrm)}\ril\Div\vw}\\
%
&+\inttauO{ \lel \Nu_{t,x};  \lr{s\bvrp-r}(\vv-\vw)\ril \cdot \frac{\Pp'(\bvrp)}{\bvrp}\Grad\bvrp }\\
&+\inttauO{ \lel \Nu_{t,x};  \lr{(1-s)\bvrm-q}(\vv-\vw)\ril \cdot \frac{\Pm'(\bvrm)}{\bvrm}\Grad\bvrm }\\
&+ \inttauO{ \lel \Nu_{t,x};  s(\tr\mathbb{D}_\vv-\Div\vw)\ril \Pp(\bvrp)+\lel \Nu_{t,x};  (1-s)(\tr\mathbb{D}_\vv-\Div\vw)\ril \Pm(\bvrm)}\\
&-\inttauO{ \left[\lr{\alpha\bvrp-R}  \Pp'(\bvrp)\frac{(1-\beta)}{\lambda+2\mu}(p_{+}(\bvrp)-p_{-}(\bvrm)) \right] }\\
&+\inttauO{ \left[\lr{(1-\alpha)\bvrm-Q}  \Pm'(\bvrm)\frac{\beta}{\lambda+2\mu}(p_{+}(\bvrp)-p_{-}(\bvrm))\right] }\\
                &+\inttauO{\lel \Nu_{t,x}; \frac{s(1-s)}{\lambda+2\mu}(p_{+}(\np)-p_{-}(\nm))\ril (\Pp(\bvrp)-\Pm(\bvrm))}\\
                &-\lel r^M;\Grad \vw\ril_{\mathcal{M}([0,\tau]\times \overline{\Omega}), C([0,\tau]\times \overline{\Omega})}\\
 &:=I_1+\ldots I_{12}.
}

Next, we use that the regular velocity vector field satisfies
with the velocity $\vw$ satisfying
\eqh{(\BR+\BQ)\pt{\vw}+(\BR+\BQ)\vw\cdot\Grad\vw-\Div \vS(\Grad\vw) +\Grad(\beta \Pp(\bvrp)+(1-\beta)\Pm(\bvrm)) =\vc{0}.}
Multiplying this equation by $\lel \Nu_{t ,x};\vw-\vv\ril$ and subtracting from $I_1+I_2+I_3$ above, we obtain
\eqh{
I_1+I_2+I_3=&\inttauO{\lel \Nu_{t ,x};(r+q-\BR-\BQ)(\vw-\vv)\ril\cdot\pt\vw} \\
&+\inttauO{\lel \Nu_{t ,x};((r+q)\vv-(\BR+\BQ)\vw)\cdot\Grad\vw\cdot(\vw-\vv)\ril} \\
&+\inttauO{\lel \Nu_{t,x};(\Div\vw-\tr\mathbb{D}_\vv)(\beta\Pp(\bvrp)+(1-\beta)\Pm(\bvrm))\ril},
}
where we also used the compatibility condition \eqref{mv-gradient-compatibility}.
So, all together \eqref{mv sum2} can be written as

\eq{      \label{mv sum3}
&\bar{\cal E}_{m,v}(\Nu|\, \beta ,\BR,\BQ,\vw)\Big|_0^\tau+\inttauO{\lel\Nu_{t,x};\vS(\mathbb{D}_{\vv}-\mathbb{D}{\vw}):(\mathbb{D}_{\vv}-\mathbb{D}{\vw})\ril}+\mathcal{D}(\tau)\\
&+\frac{1}{\lambda+2\mu}\inttauO{\lel \Nu_{t,x}; s(1-s)(p_{+}(\np) -p_{-}(\nm))^2\ril }\\
&\leq \inttauO{\lel \Nu_{t ,x};(r+q-\BR-\BQ)(\vw-\vv)\ril\cdot\pt\vw} \\
&+\inttauO{\lel \Nu_{t ,x};((r+q)\vv-(\BR+\BQ)\vw)\cdot\Grad\vw\cdot(\vw-\vv)\ril} \\
&-
\inttauO{\lel \Nu_{t,x};s\lr{\Pp(\np)-\Pp'(\bvrp)(\np-\bvrp)-\Pp(\bvrp)}\ril\Div\vw}\\
&-
\inttauO{\lel \Nu_{t,x};(1-s)\lr{\Pm(\nm)-\Pm'(\bvrm)(\nm-\bvrm)-\Pm(\bvrm)}\ril\Div\vw}\\
%
&+\inttauO{ \lel \Nu_{t,x};  \lr{s\bvrp-r}(\vv-\vw)\ril \cdot \frac{\Pp'(\bvrp)}{\bvrp}\Grad\bvrp }\\
&+\inttauO{ \lel \Nu_{t,x};  \lr{(1-s)\bvrm-q}(\vv-\vw)\ril \cdot \frac{\Pm'(\bvrm)}{\bvrm}\Grad\bvrm }\\
&+ \inttauO{ \lel \Nu_{t,x};  (s-\beta)(\tr\mathbb{D}_\vv-\Div\vw)\ril \lr{\Pp(\bvrp)-\Pm(\bvrm)}}\\
&-\inttauO{ \left[\lr{\alpha\bvrp-R}  \Pp'(\bvrp)\frac{(1-\beta)}{\lambda+2\mu}(p_{+}(\bvrp)-p_{-}(\bvrm)) \right] }\\
&+\inttauO{ \left[\lr{(1-\alpha)\bvrm-Q}  \Pm'(\bvrm)\frac{\beta}{\lambda+2\mu}(p_{+}(\bvrp)-p_{-}(\bvrm))\right] }\\
                &+\inttauO{\lel \Nu_{t,x}; \frac{s(1-s)}{\lambda+2\mu}(p_{+}(\np)-p_{-}(\nm))\ril [\Pp(\bvrp)-\Pm(\bvrm)]}\\
                &-\lel r^M;\Grad \vw\ril_{\mathcal{M}([0,\tau]\times \overline{\Omega}), C([0,\tau]\times \overline{\Omega})}.
}

Adding to both sides
\eqh{-2\inttauO{\lel \Nu_{t,x}; \frac{s(1-s)}{\lambda+2\mu}(p_{+}(\np)-p_{-}(\nm))\ril [\Pp(\bvrp)-\Pm(\bvrm)]}\\
+\inttauO{\lel \Nu_{t,x}; \frac{s(1-s)}{\lambda+2\mu}\ril [\Pp(\bvrp)-\Pm(\bvrm)]^2},
}
we eventually obtain

\eq{      \label{mv sum11}
&\bar{\cal E}_{m,v}(\Nu|\, \beta ,\BR,\BQ,\vw)\Big|_0^\tau+\inttauO{\lel\Nu_{t,x};\vS(\mathbb{D}_{\vv}-\mathbb{D}{\vw}):(\mathbb{D}_{\vv}-\mathbb{D}{\vw})\ril}+\mathcal{D}(\tau)\\
&+\frac{1}{\lambda+2\mu}\inttauO{\lel \Nu_{t,x}; s(1-s)(p_{+}(\np)-\Pp(\bvrp) -(p_{-}(\nm)-\Pm(\bvrm)))^2\ril }\\
&\leq \inttauO{\lel \Nu_{t ,x};(r+q-\BR-\BQ)(\vw-\vv)\ril\cdot\pt\vw} \\
&+\inttauO{\lel \Nu_{t ,x};((r+q)\vv-(\BR+\BQ)\vw)\cdot\Grad\vw\cdot(\vw-\vv)\ril} \\
&-
\inttauO{\lel \Nu_{t,x};s\lr{\Pp(\np)-\Pp'(\bvrp)(\np-\bvrp)-\Pp(\bvrp)}\ril\Div\vw}\\
&-
\inttauO{\lel \Nu_{t,x};(1-s)\lr{\Pm(\nm)-\Pm'(\bvrm)(\nm-\bvrm)-\Pm(\bvrm)}\ril\Div\vw}\\
%
&+\inttauO{ \lel \Nu_{t,x};  \lr{s\bvrp-r}(\vv-\vw)\ril \cdot \frac{\Pp'(\bvrp)}{\bvrp}\Grad\bvrp }\\
&+\inttauO{ \lel \Nu_{t,x};  \lr{(1-s)\bvrm-q}(\vv-\vw)\ril \cdot \frac{\Pm'(\bvrm)}{\bvrm}\Grad\bvrm }\\
&+ \inttauO{ \lel \Nu_{t,x};  (s-\beta)(\tr\mathbb{D}_\vv-\Div\vw)\ril \lr{\Pp(\bvrp)-\Pm(\bvrm)}}\\
&-\inttauO{ \left[\lr{\alpha\bvrp-R}  \Pp'(\bvrp)\frac{(1-\beta)}{\lambda+2\mu}(p_{+}(\bvrp)-p_{-}(\bvrm)) \right] }\\
&+\inttauO{ \left[\lr{(1-\alpha)\bvrm-Q}  \Pm'(\bvrm)\frac{\beta}{\lambda+2\mu}(p_{+}(\bvrp)-p_{-}(\bvrm))\right] }\\
                &-\inttauO{\lel \Nu_{t,x}; \frac{s(1-s)}{\lambda+2\mu}(p_{+}(\np)-p_{-}(\nm))\ril [\Pp(\bvrp)-\Pm(\bvrm)]}\\
&+\inttauO{\lel \Nu_{t,x}; \frac{s(1-s)}{\lambda+2\mu}\ril [\Pp(\bvrp)-\Pm(\bvrm)]^2}\\
&-\lel r^M;\Grad \vw\ril_{\mathcal{M}([0,\tau]\times \overline{\Omega}), C([0,\tau]\times \overline{\Omega})}\\
&=I_1+\ldots I_{12}.
}
Note that we can combine terms $I_8,\ldots,I_{11}$ to obtain
\eqh{I_8&+I_9+I_{10}+I_{11}\\
=&\inttauO{\lel\Nu_{t,x};
\frac{s(1-s)(\Pp(\bvrp)-\Pm(\bvrm))}{\lambda+2\mu}
\Big[\Pp(\bvrp)-\Pp'(\bvrp)(\bvrp-\np)-\Pp(\np)\Big]\ril}\\
&-\inttauO{\lel\Nu_{t,x};
\frac{s(1-s)(\Pp(\bvrp)-\Pm(\bvrm))}{\lambda+2\mu}
\Big[\Pm(\bvrm)-\Pm'(\bvrm)(\bvrm-\nm)-\Pm(\nm)\Big]\ril}\\
&+\inttauO{\lel\Nu_{t,x};
\frac{s(\bvrp-\np)\Pp'(\bvrp)(\beta-s)(\Pp(\bvrp)-\Pm(\bvrm))}{\lambda+2\mu}\ril}\\
&+\inttauO{\lel\Nu_{t,x};
\frac{(1-s)(\bvrm-\nm)\Pm'(\bvrm)(\beta-s)(\Pp(\bvrp)-\Pm(\bvrm))}{\lambda+2\mu}\ril},
}
which concludes the proof of Lemma \ref{lem:mv-relative-ineq}. $\Box$

\subsection{Derivation of equation for $(\alpha-\beta)^2$}
In this subsection, we derive a formula for part of the relative entropy corresponding to $\displaystyle \left[\int_\Omega \lel \Nu_{s,x}; (s-\beta)^2 \ril \,\dx \right]_{0}^{\tau}$. We prove the following result.
\begin{lemma}\label{lemm:alpha0}
Let $(R,Q,\alpha,\vu)$, $(\BR,\BQ,\beta,\vw)$ satisfying 
\eqh{
&\alpha,\beta\in L^\infty((0,T)\times\Omega)\cap C([0,T];L^1(\Omega)),\quad \vu,\vw\in L^2(0,T; W^{1,2}_0(\Omega;\R^3))}
and $R,\BR,Q,\BQ$ be such that
$$
p_+\Big(\frac{R}{\alpha}\Big)- p_-\Big(\frac{Q}{1-\alpha}\Big) \quad\text{ and}\quad  p_+\Big(\frac{\BR}{\beta}\Big)- p_-\Big(\frac{\BQ}{1-\beta}\Big) \in L^1(Q_T)
$$
be two solutions to the volume-fraction equation
\eqref{Salpha}: measure-valued and distributional, respectively. Suppose, in addition that
\begin{align*}
\Grad \beta\in L^\infty((0,T)\times\Omega;\R^3),\quad \Div\vw\in L^1(0,T; L^\infty(\Omega)).
\end{align*}
Then we have
\begin{align}\label{mv_relative_identity}
	\int_\Omega &\lel\Nu_{\tau,x}; (s-\beta(\tau,x))^2\ril\,\dx - \int_\Omega \lel\Nu_{0,x}; (s-\beta(0,x))^2\ril\,\dx \nonumber\\
	&= \inttauO{ \lel\Nu_{t,x}; (s-\beta)^2 \tr\mathbb{D}_{\vv}\ril } \nonumber\\
	&\quad - 2\inttauO{ \lel\Nu_{t,x}; (s-\beta)(\vv-\vw)\ril \cdot \Grad\beta } \nonumber\\
	&\quad + 2\inttauO{ \lel\Nu_{t,x}; (s-\beta)(\omega_s - \omega_\beta)\ril }.
\end{align}
\end{lemma}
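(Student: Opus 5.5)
The plan is to expand the square, $(s-\beta)^2 = s^2 - 2s\beta + \beta^2$, and to compute the time increment of each of the three pieces separately. Each piece comes from an identity already available.

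For $\int_\Omega\langle\Nu;s^2\rangle$ I will apply the renormalized equation \eqref{mv eqn 3} with $b(s)=s^2$ and $\psi\equiv1$. This gives
\[
\inttauO{\lel\Nu_{t,x};s^2\,\tr\mathbb{D}_{\vv}+2s\,\omega_s\ril},
\qquad
\omega_s:=\frac{s(1-s)}{\lambda+2\mu}\big(p_+(r/s)-p_-(q/(1-s))\big).
\]
For the cross term $-2\int_\Omega\alpha\beta$ I will use \eqref{mv eqn 3} with $b(s)=s$ and test function $\psi=\beta$. This is legitimate because $\beta$ is Lipschitz in space and its time derivative $\partial_t\beta=-\vw\cdot\Grad\beta+\omega_\beta$ is integrable, so a density argument extends the test class from $C^1$ to such $\psi$. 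The result is
\[
\inttauO{\alpha\,\partial_t\beta+\lel\Nu;s\vv\ril\cdot\Grad\beta+\lel\Nu;s\,\tr\mathbb{D}_{\vv}\ril\beta+\lel\Nu;\omega_s\ril\beta}.
\]
For $\int_\Omega\beta^2$ I will use the strong equation \eqref{reg_spec_4} multiplied by $2\beta$. Writing $\vw\cdot\Grad\beta^2=\Div(\beta^2\vw)-\beta^2\Div\vw$ and using $\vw|_{\partial\Omega}=0$, this produces
\[
\inttauO{\beta^2\Div\vw+2\beta\,\omega_\beta}.
\]
In the cross term I will likewise substitute $\partial_t\beta=-\vw\cdot\Grad\beta+\omega_\beta$.

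Next I collect the terms. The source contributions give $2s\omega_s-2\beta\omega_s-2s\omega_\beta+2\beta\omega_\beta=2(s-\beta)(\omega_s-\omega_\beta)$. The transport terms combine to $-2\langle\Nu;s(\vv-\vw)\rangle\cdot\Grad\beta$ together with divergence-type terms $\langle\Nu;s^2\tr\mathbb{D}_\vv-2s\beta\,\tr\mathbb{D}_\vv\rangle+\beta^2\Div\vw$.

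To bring these into the stated form, I note that the target contains $\langle\Nu;(s-\beta)^2\tr\mathbb{D}_\vv\rangle$ and $-2\langle\Nu;(s-\beta)(\vv-\vw)\rangle\cdot\Grad\beta$. The difference between what I have and the target is
\[
\beta^2\big(\Div\vw-\langle\Nu;\tr\mathbb{D}_\vv\rangle\big)-2\beta\,\langle\Nu;\vv-\vw\rangle\cdot\Grad\beta
=-\langle\Nu;\vv-\vw\rangle\cdot\Grad(\beta^2)-\beta^2\big(\langle\Nu;\tr\mathbb{D}_\vv\rangle-\Div\vw\big).
\]
This vanishes by the compatibility condition \eqref{mv-gradient-compatibility}, applied with $\mathbb M=\beta^2\mathbb I$ after approximating $\beta^2$ by $C^1$ functions. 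Here $\vv-\vw$ plays the role of the field with zero trace, since $\vw\in W^{1,2}_0$ and $\tr\mathbb{D}\vw=\Div\vw$.

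The main obstacle is the justification of these nonsmooth test functions. The function $\psi=\beta$ is only Lipschitz in $x$ and $W^{1,1}$ in $t$. The source $\langle\Nu;\omega_s\rangle$ is only $L^2$, and all statements hold only for a.e.\ $\tau$. I would mollify $\beta$ in $(t,x)$, apply \eqref{mv eqn 3} and \eqref{mv-gradient-compatibility} to the mollified function, and pass to the limit. For this I use the bounds $s\in[0,1]$, $\vv,\mathbb D_\vv\in L^2$, the bound \eqref{bound_om} on $\omega_s$, and the integrability of $\omega_\beta$. Since $|s|\le1$, the pairings with $\beta_\delta$ converge by dominated convergence at Lebesgue points $\tau$.
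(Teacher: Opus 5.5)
Your proposal is correct and follows the paper's proof essentially step for step. The paper also combines \eqref{mv eqn 3} with $b(s)=s^2,\ \psi=1$ and with $b(s)=s,\ \psi=\beta$ (substituting $\partial_t\beta=-\vw\cdot\nabla\beta+\omega_\beta$), adds the strong $\beta$-equation tested by $2\beta$, and removes the leftover $\beta^2$ terms using \eqref{mv-gradient-compatibility} with $\mathbb M=\beta^2\mathbb I$ together with ordinary integration by parts for $\vw$; your mollification justification for the non-$C^1$ test function $\beta$ is extra rigor the paper omits, not a different route.
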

\pf 
First observe that by definition of the measure-valued formulation, setting $\mathbf{\Phi} = \psi \mathbb{I}$  in \eqref{mv-gradient-compatibility} yields the integration-by-parts formula:
\begin{align}\label{ibp_phase}
	\inttauO{ \lel\Nu_{t,x}; \vv\ril \cdot \Grad\psi } = -\inttauO{ \lel\Nu_{t,x}; \tr\mathbb{D}_{\vv}\ril \psi }.
\end{align}

For brevity we denote the r.h.s. of the  volume-fraction equations by:
\begin{align}
	\omega_s &:= \frac{s(1-s)}{\lambda+2\mu} \left( p_+\left(\frac{r}{s}\right) - p_-\left(\frac{q}{1-s}\right) \right), \label{def:omega_s} \\
	\omega_\beta &:= \frac{\beta(1-\beta)}{\lambda+2\mu} \left( p_+\left(\frac{\BR}{\beta}\right) - p_-\left(\frac{\BQ}{1-\beta}\right) \right). \label{def:omega_beta}
\end{align}


Setting $b(s) = s^2$ ($b'(s) = 2s$) and $\psi = 1$ in \eqref{mv eqn 3}, we obtain:
\begin{align}\label{step1_identity}
	\int_\Omega \lel\Nu_{\tau,x}; s^2\ril\,\dx - \int_\Omega \lel\Nu_{0,x}; s^2\ril\,\dx
	= \inttauO{ \lel\Nu_{t,x}; s^2 \tr\mathbb{D}_{\vv} \ril } + 2\inttauO{ \lel\Nu_{t,x}; s\omega_s\ril }.
\end{align}

Next, choosing $b(s) = s$ ($b'(s) = 1$) and testing against the strong solution $\psi = \beta(t,x)$ in \eqref{mv eqn 3}, we get that:
\begin{align}\label{step2_raw}
	\int_\Omega \lel\Nu_{\tau,x}; s\ril \beta(\tau)\,\dx - \int_\Omega \lel\Nu_{0,x}; s\ril \beta(0)\,\dx
	&= \inttauO{ \lel\Nu_{t,x}; s\ril \pt\beta + \lel\Nu_{t,x}; s\vv\ril \cdot \Grad\beta } \nonumber\\
	&\quad + \inttauO{ \beta \lel\Nu_{t,x}; s\,\tr\mathbb{D}_{\vv}\ril \dx \dt+ \beta \lel\Nu_{t,x}; \omega_s\ril }.
\end{align}
Using the equation $\pt\beta = -\vw \cdot \Grad\beta + \omega_\beta$ (satisfied everywhere in $(0,T)\times \Omega$) gives:
\begin{align}\label{step2_identity}
	\int_\Omega \lel\Nu_{\tau,x}; s\ril \beta(\tau)\,\dx - \int_\Omega \lel\Nu_{0,x}; s\ril \beta(0)\,\dx
	&= \inttauO{ \lel\Nu_{t,x}; s(\vv - \vw)\ril \cdot \Grad\beta + \lel\Nu_{t,x}; s\ril \omega_\beta } \nonumber\\
	&\quad + \inttauO{ \beta \lel\Nu_{t,x}; s\,\tr\mathbb{D}_{\vv}\ril + \beta \lel\Nu_{t,x}; \omega_s\ril }.
\end{align}

Using again the equation for $\beta$ multiplied  by $2\beta$, and integrating by parts yields
\begin{align}\label{step3_identity}
	\int_\Omega \beta^2(\tau)\,\dx - \int_\Omega \beta^2(0)\,\dx = \inttauO{ \beta^2 \Div\vw } + 2\inttauO{ \beta \omega_\beta }.
\end{align}

We now assemble our final formula. Taking $\eqref{step1_identity} - 2\times\eqref{step2_identity} + \eqref{step3_identity}$, we obtain:
\begin{align}\label{recomb_raw}
	\int_\Omega &\lel\Nu_{\tau,x}; (s-\beta(\tau,x))^2\ril\,\dx - \int_\Omega \lel\Nu_{0,x}; (s-\beta(0,x))^2\ril\,\dx \nonumber\\
	&= I_{\text{trace, vel}} + 2\inttauO{ \lel\Nu_{t,x}; (s-\beta)(\omega_s - \omega_\beta)\ril },
\end{align}
where the kinetic/trace contribution is:
\begin{align*}
	I_{\text{trace, vel}} &= \inttauO{ \lel\Nu_{t,x}; s^2 \tr\mathbb{D}_{\vv}\ril } - 2\inttauO{ \beta \lel\Nu_{t,x}; s \,\tr\mathbb{D}_{\vv}\ril } \nonumber\\
	&\quad - 2\inttauO{ \lel\Nu_{t,x}; s(\vv-\vw)\ril \cdot \Grad\beta } + \inttauO{ \beta^2 \Div\vw }.
\end{align*}
Using the formula $(s-\beta)^2 = s^2 - 2s\beta + \beta^2$, the two first integrands can be rewritten as:
\begin{align*}
	\lel\Nu_{t,x}; s^2 \tr\mathbb{D}_{\vv}\ril - 2\beta \lel\Nu_{t,x}; s\, \tr\mathbb{D}_{\vv}\ril = \lel\Nu_{t,x}; (s-\beta)^2 \tr\mathbb{D}_{\vv}\ril - \beta^2 \lel\Nu_{t,x}; \tr\mathbb{D}_{\vv}\ril.
\end{align*}
Applying the duality relation \eqref{ibp_phase} with $\psi = \beta^2$ (noting $\Grad(\beta^2) = 2\beta\Grad\beta$), we obtain:
\begin{align*}
	-\inttauO{ \beta^2 \lel\Nu_{t,x}; \tr\mathbb{D}_{\vv}\ril } = 2\inttauO{ \beta \lel\Nu_{t,x}; \vv\ril \cdot \Grad\beta }.
\end{align*}
Furthermore, for the strong field $\vw \in W^{1,2}_0(\Omega;\R^3)$, standard spatial integration by parts gives:
\begin{align*}
	\inttauO{ \beta^2 \Div\vw } = -2\inttauO{ \beta \vw \cdot \Grad\beta }.
\end{align*}
Combining these back into $I_{\text{trace, vel}}$ gives:
\begin{align*}
	I_{\text{trace, vel}} &= \inttauO{ \lel\Nu_{t,x}; (s-\beta)^2 \tr\mathbb{D}_{\vv}\ril } + 2\inttauO{ \beta \lel\Nu_{t,x}; \vv\ril \cdot \Grad\beta } \nonumber\\
	&\quad - 2\inttauO{ \lel\Nu_{t,x}; s(\vv-\vw)\ril \cdot \Grad\beta } - 2\inttauO{ \beta \vw \cdot \Grad\beta } \nonumber\\
	&= \inttauO{ \lel\Nu_{t,x}; (s-\beta)^2 \tr\mathbb{D}_{\vv}\ril } + 2\inttauO{ \lel\Nu_{t,x}; \Big( \beta\vv - s(\vv-\vw) - \beta\vw \Big)\ril \cdot \Grad\beta }.
\end{align*}
Noting the algebraic identity $\beta\vv - s(\vv-\vw) - \beta\vw = -(s-\beta)(\vv-\vw)$, we conclude:
\begin{align*}
	I_{\text{trace, vel}} = \inttauO{ \lel\Nu_{t,x}; (s-\beta)^2 \tr\mathbb{D}_{\vv}\ril } - 2\inttauO{ \lel\Nu_{t,x}; (s-\beta)(\vv-\vw)\ril \cdot \Grad\beta }.
\end{align*}

Inserting $I_{\text{trace, vel}}$ into \eqref{recomb_raw} establishes the desired identity. $\Box$

\section{The weak--strong uniqueness principle}\label{Sec:5}
The purpose of this section is to prove the measure valed-strong uniqueness principle.

Our main theorem comparing global measure-valued solutions with the local strong solutions can be formulated as follows.

\begin{thm}\label{thm:entropy}
Let $(\Nu,{\mathcal D})$ be a measure-valued solution in the sense of Definition \ref{MVdef}, given by Theorem \ref{t:mv} and let $(\beta, \BR,\BQ,\vw)$ be the strong solution emanating from the data $(\beta_0 , \BR_0,\BQ_0,\vw_0)\in (W^{2,q}(\Omega))^3\times B^{3-\frac 2p}_{q,p}$, $p>2$, $q>3$, $\frac 2p + \frac 3q <1$, $0<\underline{\beta} \leq \beta_0 \leq \overline{\beta} <1$ in $\Omega$.

Then there exists a small positive parameter $\kappa$ and a positive constant $C_\kappa$ depending on $\kappa$ and on the strong solution (but not on the regularity of the weak solution), such that for a.a. $\tau\in(0,T)$
\begin{align*}
  &{\cal E}_{mv}(\Nu \,| \, \beta,\BR,\BQ,\vw)(\tau)+\mathcal{D}(\tau)\\
  &\qquad+(1-\kappa)\inttauO{\lel\Nu_{t,x};\vS(\mathbb{D}_{\vv}-\mathbb{D}{\vw}):(\mathbb{D}_{\vv}-\mathbb{D}{\vw})\ril}\\
&\qquad+\frac{1-\kappa}{\lambda+2\mu}\inttauO{\lel \Nu_{t,x}; s(1-s)(p_{+}(\np)-\Pp(\bvrp) -(p_{-}(\nm)-\Pm(\bvrm)))^2\ril }\\
  &\qquad\qquad\qquad
  \leq C_\kappa 
  {\cal E}_{mv}(\Nu\,| \, \beta,\BR,\BQ,\vw)(0).
\end{align*}

In particular, if the initial data coincide, i.e.
\eq{
\Nu_{0,x}=\delta_{(\beta_0 , \BR_0,\BQ_0,\vw_0)},
}
then 
\begin{align*}
\Nu_{\tau,x}=\delta_{(\beta(\tau,x),\BR(\tau,x),\BQ(\tau,x),\vw(\tau,x),\DD\vw(\tau,x))}
\end{align*}
for a.a. $\tau \in (0,T),\,x\in \Omega$.
\end{thm}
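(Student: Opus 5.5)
We combine Lemma~\ref{lem:mv-relative-ineq} with Lemma~\ref{lemm:alpha0}: adding \eqref{mv_relative_identity} to \eqref{mv_sum_fin} produces an inequality for the full $\mathcal{E}_{mv}$ whose left side contains the defect $\mathcal D(\tau)$, the viscous dissipation $\langle\Nu;\vS(\mathbb D_\vv-\mathbb D\vw):(\mathbb D_\vv-\mathbb D\vw)\rangle$ and the relative relaxation dissipation $\frac1{\lambda+2\mu}\langle\Nu;s(1-s)(\cdots)^2\rangle$. The right side is ${\cal R}_{mv}=J_1+\dots+J_{12}$ together with the three terms of \eqref{mv_relative_identity}. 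The plan is to show that every term on the right is bounded by
\[
\kappa\,(\text{dissipation terms}) + C_\kappa\int_0^\tau h(t)\big(\mathcal E_{mv}(t)+\mathcal D(t)\big)\,\dt,\qquad h\in L^1(0,T),
\]
with $h$ built from $\|\vw\|_{W^{1,\infty}}$, $\|\pt\vw\|_{L^\infty}$, $\|(\nabla\beta,\nabla\bvrp,\nabla\bvrm)\|_{L^\infty}$ and $\chi$ (these are finite by Remark~\ref{r:stronger_reg}). Gronwall's lemma then gives the stated estimate, and $\mathcal E_{mv}(0)=0$ for Dirac initial data forces $\mathcal E_{mv}\equiv0$ and $\mathcal D\equiv0$. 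The Korn--Poincar\'e inequality \eqref{mv-poincare-korn} yields $\langle\Nu;|\vv-\vw|^2+|\mathbb D_\vv-\mathbb D\vw|^2\rangle=0$, and coercivity of the relative energy (strict convexity of $H_\pm$, using $\bvrp,\bvrm\ge\underline\rho$ and $\underline\beta\le\beta\le\overline\beta$) yields $s=\beta$, $r=\BR$, $q=\BQ$ $\Nu$-a.s. Hence $\Nu$ is the claimed Dirac mass.

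\textbf{Step 1: standard terms.} $J_{12}$ is bounded by $\int\chi\mathcal D\|\vw\|_{C^1}$ using \eqref{mom-def}. $J_3$ and $J_4$ are controlled directly: the pressure relative terms are bounded by a multiple of the $H_\pm$ relative entropies, since $p_\pm=(\gamma^\pm-1)H_\pm$. $J_1$, $J_2$, $J_5$ and $J_6$ are handled in the usual compressible Navier--Stokes way. Write $r-\BR=s(\np-\bvrp)+(s-\beta)\bvrp$ and split phase space into an essential region $\{\np,\nm\in[\underline\rho/2,2\overline\rho]\}$ and a residual region. On the essential region, $|s(\np-\bvrp)|^2\lesssim s\,(H_+\text{-relative entropy})$ and $|s-\beta|^2\le\mathcal E_{mv}$; the factor $|\vv-\vw|$ is absorbed by $\kappa\langle\Nu;|\vv-\vw|^2\rangle\le\kappa C_{\rm PK}\langle\Nu;|\mathbb T(\mathbb D_\vv)-\mathbb T(\nabla\vw)|^2\rangle$, which is part of the viscous dissipation. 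On the residual region, $r\lesssim s H_+(\np)$ for large $\np$ and $r$ is small for small $\np$, so the usual H\"older/Young splitting with the $L^{\gamma}$ bounds applies. In $J_2$ we write $(r+q)\vv-(\BR+\BQ)\vw=(r+q)(\vv-\vw)+(r+q-\BR-\BQ)\vw$; the first piece is bounded by $\|\nabla\vw\|_\infty\langle\Nu;(r+q)|\vv-\vw|^2\rangle$.

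\textbf{Step 2: volume-fraction terms.} In \eqref{mv_relative_identity}, the term $\langle\Nu;(s-\beta)^2\tr\mathbb D_\vv\rangle$ is rewritten as $(s-\beta)^2\Div\vw+(s-\beta)^2(\tr\mathbb D_\vv-\Div\vw)$. Since $|s-\beta|\le1$, Young's inequality together with \eqref{mv-poincare-korn} bounds the second part by $\kappa\cdot$dissipation $+C_\kappa\mathcal E_{mv}$. The same treatment applies to $J_7$ and to the term $(s-\beta)(\vv-\vw)\cdot\nabla\beta$. For the source terms $2\langle\Nu;(s-\beta)(\omega_s-\omega_\beta)\rangle$ and $J_8,\dots,J_{11}$ we use the decomposition
\[
\omega_s-\omega_\beta=\tfrac{s(1-s)}{\lambda+2\mu}\big[(p_+(\np)-p_-(\nm))-(p_+(\bvrp)-p_-(\bvrm))\big]+\tfrac{s(1-s)-\beta(1-\beta)}{\lambda+2\mu}\big(p_+(\bvrp)-p_-(\bvrm)\big).
\]
The second part is bounded by $C|s-\beta|^2$. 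For the first part, Cauchy--Schwarz with weight $s(1-s)$ gives $\kappa\cdot$(relative relaxation dissipation)$+C_\kappa\langle\Nu;s(1-s)(s-\beta)^2\rangle$. $J_8$ and $J_9$ are directly bounded by $s$ and $(1-s)$ times the relative entropies. $J_{10}$ and $J_{11}$ contain $s|\np-\bvrp||s-\beta|$, which is bounded via Young's inequality on the essential region and via growth of $H_\pm$ on the residual region.

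\textbf{Main obstacle.} The hardest part will be the residual regions near $s\in\{0,1\}$ in $J_5,J_6,J_{10},J_{11}$ and in $r+q-\BR-\BQ$. There $\np=r/s$ may be large while $s$ is tiny, and the relative entropy only controls $sH_+(\np)$. My approach is to exploit that all such terms appear multiplied by $s$ (or $1-s$), so that $s|\np-\bvrp|\le s(1+\np^{\gamma^+})\lesssim s+sH_+(\np)$. For $\np$ large, $s\,H_+$-relative entropy dominates $s(1+\np^{\gamma^+})$. For $\np$ bounded, the linear-in-$(\np-\bvrp)$ piece is controlled by the quadratic relative entropy together with $|s-\beta|^2$. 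The reason for augmenting the relative energy by $(s-\beta)^2$ is precisely that $\beta\ge\underline\beta$ makes small-$s$ regions cost $|s-\beta|^2\ge\underline\beta^2/4$.
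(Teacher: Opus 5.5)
Your proposal is correct and follows the paper's own proof. Both add the $(s-\beta)^2$ identity of Lemma~\ref{lemm:alpha0} to the inequality of Lemma~\ref{lem:mv-relative-ineq}, bound $J_1,\dots,J_{12}$ and the volume-fraction terms by essential/residual splitting and Young's inequality, absorb the $\kappa$-terms into the shear dissipation via \eqref{mv-poincare-korn}, and close with Gronwall. Your explicit splitting $\omega_s-\omega_\beta=\frac{s(1-s)}{\lambda+2\mu}\bigl[(p_+(\np)-p_-(\nm))-(p_+(\bvrp)-p_-(\bvrm))\bigr]+\frac{s(1-s)-\beta(1-\beta)}{\lambda+2\mu}\bigl(p_+(\bvrp)-p_-(\bvrm)\bigr)$ is actually more careful than the paper: the paper bounds $\kappa\langle\Nu;|\omega_s-\omega_\beta|^2\rangle$ by the relative relaxation dissipation alone, which overlooks the weight-mismatch contribution $C|s-\beta|^2$ that you correctly account for.
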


The proof of this theorem is split into two parts: in the next subsection we first estimate reminder terms from \eqref{mv_remainder} and then, in the following subsection we estimate the r.h.s of the formula \eqref{mv_relative_identity}. 

\subsection{Estimates of the reminder \eqref{mv_remainder}}
 We will prove it by estimating $J_1,\ldots, J_{12}$ from the remainder \eqref{mv_remainder}  term by term. {The first 6 terms can be estimated following the similar strategy from the author's previous paper \cite{LLPZ}.}

Following the approach from \cite{FG2W, AFN21, C2024, Mizerova} we introduce for $\epsilon >0$ a cut-off function
\eqh{\psi_\epsilon \in C^\infty_c((0,\infty)^2),\quad 0\leq \psi_\delta(n_+,n_-)\leq 1,\\
\psi_\epsilon=1 \quad \text{whenever}\ \epsilon<n_+,n_-<\frac{1}{\epsilon}.}
Any measurable function $f(s,r,q,\vv)$ can be written as $f=[f]_{ess}+[f]_{res}$, where
\eqh{
[f]_{ess}=\psi_\epsilon(n_+,n_-)f(s,r,q,\vv),\quad [f]_{res}=(1-\psi_\epsilon(n_+,n_-))f(s,r,q,\vv),
}
are referred to as the essential and residual components of $f$, respectively. 
We first notice that due to definitions \eqref{rel_entr}, \eqref{RQ_def} and \eqref{H_def}, we have
\begin{align} \label{res_ess}
 &  {\cal E}_{mv}(\Nu\,| \,\beta, \BR,\BQ,\vw)(\tau) \nonumber   \\
 & \qquad 
   \geq C \begin{cases}
        &\displaystyle \intO{\lel \Nu_{\tau,x};  \Big[
        s\lr{n_+ -\bvrp}^2  +(1-s)\lr{\n_- -\bvrm}^2   \Big]\ril
        }, \quad \n_+,n_- \in \left[\epsilon, 1/\epsilon\right],  \\[3ex] 
         & \displaystyle   \intO{ \lel \Nu_{\tau,x};
         \lr{1+s\n_+^\gp+(1-s)n_-^\gm}\ril
         } ,\quad  \text{ otherwise},
    \end{cases}
\end{align}
for some $\epsilon>0$ and $\tau\in(0,T)$.
We also define the essential and residual sets corresponding to $n_+,n_-$ as follows
\begin{align*}
\Omega_{ \rm ess}(t):=\{x\in\Omega: n_+, n_- \in \left[\epsilon, 1/\epsilon\right]\},
\quad 
\Omega_{\rm res}(t)=\Omega\backslash \Omega_{ \rm ess}(t).
\end{align*}

\medskip 

\noindent{\emph{Estimate of $J_1+J_2$}.} We split the first two terms of the remainder \eqref{mv_remainder} as follows
\begin{align*}
J_1+J_2=& \inttauO{\lel \Nu_{t ,x};(r+q-\BR-\BQ)(\vw-\vv)\ril\cdot\pt\vw} \\
&+\inttauO{\lel \Nu_{t ,x};((r+q)\vv-(\BR+\BQ)\vw)\cdot\Grad\vw\cdot(\vw-\vv)\ril} \\
=&\inttauO{\lel \Nu_{t ,x};\left[(r-\BR+q-\BQ)(\pt\vw+ \vw\cdot\Grad\vw)\right]\cdot(\vw-\vv)\ril}\\
&+\inttauO{\lel \Nu_{t ,x};(r+q)(\vv-\vw)\cdot\Grad\vw\cdot(\vw-\vv)\ril}.
\end{align*}
The second term might be estimated by $ \int_0^\tau C(t)\bar{\cal E}_{mv}(\Nu\,|\,\beta,\BR,\BQ,\vw)\,\dt$ with  $C$ depending on $\|\Grad\vw\|_{L^\infty(\Omega)}$.
To estimate the first term, we use
\eq{\label{decomp}
r-\BR=s(\np-\bvrp)-\bvrp(\beta-s),
}
and similarly for $q-\BQ$. Thus together we obtain
\begin{align*}
|r-\BR+q-\BQ|\leq C
\Big(
\sqrt{s}|\np-\bvrp|+\sqrt{1-s}|\nm-\bvrm|+|\beta-s| 
\Big),
\end{align*}
where $C>0$ depends only on $\|(\bvrp,\bvrm)\|_{L^\infty(\Omega)}$ and on $\|\alpha\|_{L^\infty(\Omega)}$.
Therefore
\begin{align*}
&\inttauO{\lel \Nu_{t ,x};\mathbf{1}_{ \rm ess}\left[(r-\BR+q-\BQ)(\pt\vw+\vw\cdot\Grad\vw)\right]\cdot(\vw-\vv)\ril}\\
&\quad \leq \kappa \inttauO{\lel\Nu_{t,x};|\vv-\vw|^2\ril} \\
&\qquad+\int_0^\tau C_\kappa(t)  \Big(\bar{\cal E}_{mv}(\Nu\,|\,\beta,\BR,\BQ,\vw)+\int_\Omega \lel \Nu_{s,x}; (s-\beta)^2 \ril \,\dx \Big)\,\dt,
\end{align*}
with $C_\kappa$ depending additionally on $\|\pt\vw+\vw\cdot \nabla\vw\|_{L^\infty(\Omega)}$.

For the residual part of the first integral, considering the small and large values of $\nm,\np$ separately, we get
\eq{\label{resid}
&\inttauO{\lel \Nu_{t ,x};\mathbf{1}_{ \rm res}\left[(r-\BR+q-\BQ)(\pt\vw+\vw\cdot\Grad\vw)\right]\cdot(\vw-\vv)\ril}\\
&\quad\leq \kappa \inttauO{\lel\Nu_{t,x};|\vv-\vw|^2\ril}  +\int_0^\tau C_\kappa(t) \bar{\cal E}_{mv}(\Nu\,|\,\beta,\BR,\BQ,\vw)\,\dt\\
&\qquad +\int_0^\tau C_\kappa(t)\lr{\intO{\lel \Nu_{t ,x};\mathbf{1}_{ \rm res} (r+q)\ril}}^{\frac{1}{2}}\lr{\intO{\lel \Nu_{t ,x};(r+q)|\vv-\vw|^2\ril}}^{\frac{1}{2}} \, \dt \\
&\quad \leq \kappa \inttauO{\lel\Nu_{t,x};|\vv-\vw|^2\ril}  +\int_0^\tau C_\kappa(t) \bar{\cal E}_{mv}(\Nu
\,|\,\beta,\BR,\BQ,\vw)\,\dt,
}
with $C_\kappa$ proportional to $ \|\pt\vw+\vw\cdot \nabla \vw\|_{L^\infty(\Omega)}^2$. Ultimately we have
\begin{align*}
J_1+J_2\leq \kappa \inttauO{\lel\Nu_{t,x};|\vv-\vw|^2\ril} +\int_0^\tau C_\kappa(t) \bar{\cal E}_{mv}(\Nu\,|\,\beta,\BR,\BQ,\vw)\,\dt,
\end{align*}
where $C_\kappa$ depends on $\|\pt\vw+\vw\cdot\Grad\vw\|_{L^\infty(\Omega)}$ and  on $\|\Grad\vw\|_{L^\infty(\Omega)}$.


\medskip 

\noindent{\emph{Estimate of $J_3$ and $J_4$}.} Again, both the two terms will be considered on the essential and residual parts of the domain. Using Taylor expansion for $J_3$, we obtain
\begin{align*}
&\inttauO{\lel \Nu_{t ,x};\mathbf{1}_{ \rm ess}s 
\Big(
\Pp(\bvrp)-\Pp'(\bvrp)(\bvrp-\np)-\Pp(\np)
\Big)
\Div\vw\ril}\\
&\leq C\inttauO{\lel \Nu_{t ,x};\mathbf{1}_{ \rm ess}s(\bvrp-\np)^2\ril}
\leq \int_0^\tau C(t) \bar{\cal E}_{mv}(\Nu\,|\,\beta,\BR,\BQ,\vw)\,\dt,
\end{align*}
where $C$ depends on $\|\alpha\|_{L^\infty(\Omega)}$ and $\|\Div\vw\|_{L^\infty(\Omega)}$. The estimate for the essential part of $J_4$ is similar. The residual part is bounded straight from \eqref{res_ess}. Summarizing, we have
\begin{align*}
J_3+J_4\leq  \int_0^\tau C(t) \bar{\cal E}_{mv}(\Nu\,|\,\beta,\BR,\BQ,\vw) \, \dt.
\end{align*}

\medskip 

\noindent{\emph{Estimate of $J_5$ and $J_6$}.} Both terms are again treated in an analogous way, so we estimate only one of them. Using  decomposition $\lr{s\bvrp-r}= -s(\np-\bvrp)$, and the residual part estimate analogous to \eqref{resid}, we verify that 
\begin{align*}
J_5&=\inttauO{ \lel \Nu_{t,x};  \lr{s\bvrp-r}(\vv-\vw)\ril \cdot \frac{\Pp'(\bvrp)}{\bvrp}\Grad\bvrp }\\
& \leq \kappa \inttauO{\lel\Nu_{t,x};|\vv-\vw|^2\ril} +\int_0^\tau C_\kappa(t) \bar{\cal E}_{mv}(\Nu\,|\,\beta,\BR,\BQ,\vw)\,\dt,
\end{align*}
where $C_\kappa$ depends on $\|\bvrp\|_{W^{1,\infty}(\Omega)}$. 
For $J_6$ we obtain the same estimate with $C_\kappa$ depending on $\|\bvrm\|_{W^{1,\infty}(\Omega)}$.


\medskip

Next we estimate the terms specific to the volume-fraction PDE closure relation, which did not appear in \cite{LLPZ}.

\noindent{\emph{Estimate of $J_7$}.} 
Integrating by parts we obtain
\eqh{
J_7=& \inttauO{ \lel \Nu_{t,x};  (s-\beta)(\tr\mathbb{D}_\vv-\Div\vw)\ril \lr{\Pp(\bvrp)-\Pm(\bvrm)}}\\
\leq&\kappa \inttauO{ \lel\Nu_{t,x};   |\tr\mathbb{D}_{\vv} - \Div\vw|^2\ril } +\int_0^\tau C_\kappa(t) \intO{\lel\Nu_{t,x}; (s-\beta)^2 \ril} \,\dt,}
with $C_\kappa$ depending on $\|\bvrp,\bvrm\|_{W^{1,\infty}(\Omega)}$.

\medskip
\noindent{\emph{Estimate of $J_8+J_9$}.} 
These terms are directly comparable to the potential part of the relative entropy. Splitting the terms into their essential and residual parts, we show, exactly like for $J_3+J_4$ that
\eqh{
&J_8+J_9=\\
&\inttauO{\lel\Nu_{t,x};
\frac{s(1-s)(\Pp(\bvrp)-\Pm(\bvrm))}{\lambda+2\mu}
\Big[\Pp(\bvrp)-\Pp'(\bvrp)(\bvrp-\np)-\Pp(\np)\Big]\ril}\\
&-\inttauO{\lel\Nu_{t,x};
\frac{s(1-s)(\Pp(\bvrp)-\Pm(\bvrm))}{\lambda+2\mu}
\Big[\Pm(\bvrm)-\Pm'(\bvrm)(\bvrm-\nm)-\Pm(\nm)\Big]\ril}\\
&\leq  \inttau{C(t)\intO{\lel\Nu_{t,x};s\Big(\Hp(\np)-\Hp'(\bvrp)(\np-\bvrp)-\Hp(\bvrp)
\Big)\ril}}\\
&\quad+\inttau{C(t)\intO{\lel\Nu_{t,x};(1-s)\Big(\Hm(\nm)-\Hm'(\bvrm)(\nm-\bvrm)-\Hm(\bvrm)
\Big)\ril}}\\
&\leq  \int_0^\tau C(t)\bar{\cal E}_{mv}(\Nu\,|\,\beta,\BR,\BQ,\vw)\,\dt,
}
where $C=C\lr{\gamma^+,\gamma^-,\mu,\lambda,\|\Pp(\bvrp),\Pm(\bvrm),\alpha\|_{L^\infty(\Omega)}}$.

\medskip
\noindent{\emph{Estimate of $J_{10}+J_{11}$}.} The further two terms can be again grouped together and estimated similarly to $J_5+J_6$, except that no small parameter is needed; we get
\eqh{
J_{10}+J_{11}=&\inttauO{\lel\Nu_{t,x};
\frac{s(\bvrp-\np)\Pp'(\bvrp)(\beta-s)(\Pp(\bvrp)-\Pm(\bvrm))}{\lambda+2\mu}\ril}\\
&+\inttauO{\lel\Nu_{t,x};
\frac{(1-s)(\bvrm-\nm)\Pm'(\bvrm)(\beta-s)(\Pp(\bvrp)-\Pm(\bvrm))}{\lambda+2\mu}\ril}\\
\leq& \int_0^\tau C(t)  \Big(\bar{\cal E}_{mv}(\Nu\,|\,\beta,\BR,\BQ,\vw)+\int_\Omega \lel \Nu_{s,x}; (s-\beta)^2 \ril \,\dx \Big)\,\dt,
}
where $C=C\lr{\gamma^+,\gamma^-,\|\Pp(\bvrp),\Pm(\bvrm),\alpha\|_{L^\infty(\Omega)}}$.
Finally, from \eqref{mom-def}, we obtain the estimate of the last term
\eqh{
J_{12}\leq \left\vert \lel r^M;\Grad \vw\ril_{\mathcal{M}([0,\tau]\times \overline{\Omega}), C([0,\tau]\times \overline{\Omega})} \right\vert \leq  C(\Vert \Grad\vw \Vert_{C([0,T]\times\overline{\Omega})})\int_0^\tau \chi(t) \mathcal{D}(t)\,  \dt.
}

By summing up the above estimates, we obtain that \eqref{mv_sum_fin} with the reminder given by \eqref{mv_remainder} reduces to

\eq{\label{rem_fin}
&\bar{\cal E}_{mv}(\Nu,{\mathcal D}\,|\,\beta,\BR,\BQ,\vw)\Big|_0^\tau+{\cal D}(\tau)
+\inttauO{\lel\Nu_{t,x};\vS(\mathbb{D}_{\vv}-\mathbb{D}\vw): \lr{\mathbb{D}_{\vv}- \mathbb{D}\vw  }\ril}\\
&\quad+\frac{1}{\lambda+2\mu}
\inttauO{\lel\Nu_{t,x};s(1-s)
\Big(\Pp(\np)-\Pm(\nm)-\Pp(\bvrp)+\Pm(\bvrm)\Big)^2\ril}\\
&\leq \kappa \inttauO{\lel\Nu_{t,x};|\vv-\vw|^2\ril}+\kappa \inttauO{ \lel\Nu_{t,x};   |\tr\mathbb{D}_{\vv} - \Div\vw|^2\ril }\\
&\qquad+C(\Vert \Grad\vw \Vert_{C([0,T]\times\overline{\Omega})})\int_0^\tau \chi(t) \mathcal{D}(t)\,  \dt \\
&\qquad+\int_0^\tau C_\kappa(t)  \Big(\bar{\cal E}_{mv}(\Nu\,|\,\beta,\BR,\BQ,\vw)+\int_\Omega \lel \Nu_{s,x}; (s-\beta)^2 \ril \,\dx \Big)\,\dt.
}

\subsection{Estimates of the r.h.s. of \eqref{mv_relative_identity}}

\begin{lemma}
    We have
\eq{\label{al-be}
			\int_\Omega &\lel\Nu_{\tau,x}; (s-\beta)^2\ril\,\dx\Big|_0^\tau \\
&\leq  C(\kappa, \nabla \beta,\vw)\inttauO{ \lel\Nu_{t,x}; (s-\beta)^2 \ril } \\
			&\quad +\kappa  \inttauO{ \lel\Nu_{t,x};   |\tr\mathbb{D}_{\vv} - \Div\vw|^2\ril }\\ 
			&\quad + \kappa \inttauO{ \lel\Nu_{t,x};   |\vv - \vw|^2\ril }  \\ 
			&\quad + \frac{\kappa}{\lambda+2\mu} \inttauO{\lel\Nu_{t,x};s(1-s)
\Big(\Pp(\np)-\Pm(\nm)-\Pp(\bvrp)+\Pm(\bvrm)\Big)^2\ril}.
}
\end{lemma}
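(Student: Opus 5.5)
We start from the identity \eqref{mv_relative_identity} of Lemma~\ref{lemm:alpha0} and estimate its three right-hand side terms separately.

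\emph{Trace term.} We write $\tr\mathbb D_\vv=(\tr\mathbb D_\vv-\Div\vw)+\Div\vw$. The part with $\Div\vw$ is bounded by $\|\Div\vw\|_{L^\infty}\int\lel\Nu;(s-\beta)^2\ril$. For the fluctuation part we use $|s-\beta|\le1$ and Young's inequality:
\[
\lel\Nu;(s-\beta)^2|\tr\mathbb D_\vv-\Div\vw|\ril\le\kappa\lel\Nu;|\tr\mathbb D_\vv-\Div\vw|^2\ril+\tfrac1{4\kappa}\lel\Nu;(s-\beta)^2\ril .
\]

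\emph{Transport term.} We apply Cauchy--Schwarz with $\|\Grad\beta\|_{L^\infty}$ to get $\kappa\lel\Nu;|\vv-\vw|^2\ril+C_\kappa\lel\Nu;(s-\beta)^2\ril$.

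\emph{Source term.} This is the main step. We take $\omega_\beta$ evaluated at the strong state, with $\BR/\beta=\bvrp$ and $\BQ/(1-\beta)=\bvrm$, and use the algebraic splitting
\[
\omega_s-\omega_\beta=\frac{s(1-s)}{\lambda+2\mu}\Big[(\Pp(\np)-\Pm(\nm))-(\Pp(\bvrp)-\Pm(\bvrm))\Big]+\frac{s(1-s)-\beta(1-\beta)}{\lambda+2\mu}\big(\Pp(\bvrp)-\Pm(\bvrm)\big).
\]
For the second piece, $|s(1-s)-\beta(1-\beta)|=|s-\beta|\,|1-s-\beta|\le|s-\beta|$. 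Multiplied by $2(s-\beta)$, it is therefore bounded by $C\|\Pp(\bvrp),\Pm(\bvrm)\|_{L^\infty}(s-\beta)^2$.

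For the first piece we use $s(1-s)\le1$, so that $s(1-s)=\sqrt{s(1-s)}\cdot\sqrt{s(1-s)}\le\sqrt{s(1-s)}$. Young's inequality then gives
\[
\frac{2(s-\beta)s(1-s)}{\lambda+2\mu}[\dots]\le\frac{\kappa}{\lambda+2\mu}s(1-s)[\dots]^2+\frac{1}{\kappa(\lambda+2\mu)}(s-\beta)^2 ,
\]
which is exactly the relaxation-dissipation term appearing in \eqref{al-be}. Here $[\dots]$ denotes the bracket in the first piece of the splitting.

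To make these manipulations legitimate at the endpoints $s\in\{0,1\}$, we argue as in Section~4. Finiteness of the energy gives $\Nu_{t,x}$-a.s.\ $r=0$ on $\{s=0\}$ and $q=0$ on $\{s=1\}$. There $\omega_s=0$ and $s(1-s)=0$, so both sides of every pointwise inequality vanish or reduce consistently. The pointwise bounds therefore hold $\Nu_{t,x}$-a.e., and integration is justified by finiteness of $\lel\Nu;\mathcal R\ril$ from the energy inequality.

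Summing the three estimates, with all constants collected into $C(\kappa,\Grad\beta,\vw)$ (which also depends on the $L^\infty$ bounds of $\bvrp,\bvrm$), yields \eqref{al-be}. The only delicate point is the source term. There the naive bound of $\omega_s$ in terms of $\mathcal R$ alone is insufficient, and one must subtract the strong pressure gap to produce the shifted dissipation, which is absorbed into the left side of \eqref{rem_fin}.
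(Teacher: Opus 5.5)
Your proposal is correct and follows the same route as the paper: start from the identity of Lemma~\ref{lemm:alpha0}, split $\tr\mathbb{D}_{\vv}$ around $\Div\vw$, and apply Young's inequality term by term, using $|s-\beta|\le1$ for the quartic term. Your explicit splitting of $\omega_s-\omega_\beta$ is more careful than the paper's treatment of $I_4$. The paper bounds $\kappa\lel\Nu_{t,x};|\omega_s-\omega_\beta|^2\ril$ directly by the shifted relaxation dissipation and so skips the piece $\frac{s(1-s)-\beta(1-\beta)}{\lambda+2\mu}\big(\Pp(\bvrp)-\Pm(\bvrm)\big)$, whereas your estimate $|s(1-s)-\beta(1-\beta)|\le|s-\beta|$ correctly sends it into the $(s-\beta)^2$ term, with a constant that also depends on $\|\bvrp\|_{L^\infty}$ and $\|\bvrm\|_{L^\infty}$.
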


\pf First let us rewrite  \eqref{mv_relative_identity} as
\begin{align}\label{mv_relative_identity_rewritten}
	\int_\Omega &\lel\Nu_{\tau,x}; (s-\beta(\tau,x))^2\ril\,\dx - \int_\Omega \lel\Nu_{0,x}; (s-\beta(0,x))^2\ril\,\dx \nonumber\\
	&= \inttauO{ \lel\Nu_{t,x}; (s-\beta)^2 (\tr\mathbb{D}_{\vv} - \Div\vw)\ril } \nonumber\\
	&\quad + \inttauO{ \lel\Nu_{t,x}; (s-\beta)^2 \Div\vw\ril } \nonumber\\
	&\quad - 2\inttauO{ \lel\Nu_{t,x}; (s-\beta)(\vv-\vw)\ril \cdot \Grad\beta } \nonumber\\
	&\quad + 2\inttauO{ \lel\Nu_{t,x}; (s-\beta)(\omega_s - \omega_\beta)\ril } = \sum_{i=1}^{4} I_i.
\end{align}
We now fix some $\kappa>0$ small, and estimate each of the terms on the r.h.s. of \eqref{mv_relative_identity}. 
For $I_1$, we have
	\begin{align*}
		|I_1| &\leq \kappa \inttauO{ \lel\Nu_{t,x};  |\tr\mathbb{D}_{\vv} - \Div\vw|^2\ril } +C(\kappa) \inttauO{ \lel\Nu_{t,x}; (s-\beta)^4 \ril } \ \\
		&\leq \kappa \inttauO{ \lel\Nu_{t,x};   |\tr\mathbb{D}_{\vv} - \Div\vw|^2\ril } +4 C(\kappa)\inttauO{ \lel\Nu_{t,x}; (s-\beta)^2 \ril },
	\end{align*}
    where to pass to the second line we used that for $s\in[0,1]$ as in \eqref{phase}, we have
	\[
	(s-\beta)^4\le\bigl(1+\|\beta\|_{L^\infty((0,T)\times\Omega)}\bigr)^2(s-\beta)^2\le 4\,(s-\beta)^2;
	\]
	whence $\langle\Nu_{t,x};(s-\beta)^4\rangle\le 4\,\langle\Nu_{t,x};(s-\beta)^2\rangle$.
    
The next estimate is straightforward, we have
	\begin{align*}
			|I_2| &\leq C(|| \div \vw||_{L^\infty }) \int_0^\tau  \lr{	\int_\Omega  \lel\Nu_{t,x}; (s-\beta(\tau,x))^2\ril\,\dx} \dt.
	\end{align*}

For $I_3$ we have 
	\begin{align*}
		|I_3| & \leq \kappa \inttauO{ \lel\Nu_{t,x};   |\vv - \vw|^2\ril } +  C(\kappa, \nabla \beta)\inttauO{ \lel\Nu_{t,x}; (s-\beta)^2 \ril }.
	\end{align*}
Finally, for the last term  we get 
	\begin{align*}
			|I_4| & \leq C(\kappa)\inttauO{ \lel\Nu_{t,x}; (s-\beta)^2 \ril } + \kappa \inttauO{ \lel\Nu_{t,x}; |\omega_s-\omega_\beta|^2 \ril } \\
			&\leq C(\kappa)\inttauO{ \lel\Nu_{t,x}; (s-\beta)^2 \ril } \\
			&\qquad + \frac{\kappa}{\lambda+2\mu}\inttauO{\lel\Nu_{t,x};s(1-s)
\Big(\Pp(\np)-\Pm(\nm)-\Pp(\bvrp)+\Pm(\bvrm)\Big)^2\ril}. 
	\end{align*}
Collecting all this, we obtain \eqref{al-be}. $\Box$

\subsection{Conclusion of the proof of Theorem \ref{thm:entropy}}
Recalling that
\eqh{{\cal E}_{mv}(\Nu\,|\,\beta,\BR,\BQ,\vw)=\bar{\cal E}_{mv}(\Nu\,|\,\beta,\BR,\BQ,\vw)+\intO{\lel\Nu_{t,x}; (s-\beta)^2 \ril},}
and summing up \eqref{rem_fin} with \eqref{al-be} we obtain 
\eq{\label{entr_fin}
&{\cal E}_{mv}(\Nu\,|\,\beta,\BR,\BQ,\vw)\Big|_0^\tau+{\cal D}(\tau)
+\inttauO{\lel\Nu_{t,x};\vS(\mathbb{D}_{\vv}-\mathbb{D}\vw): \lr{\mathbb{D}_{\vv}- \mathbb{D}\vw  }\ril}\\
&\quad+\frac{1-\kappa}{\lambda+2\mu}
\inttauO{\lel\Nu_{t,x};s(1-s)
\Big(\Pp(\np)-\Pm(\nm)-\Pp(\bvrp)+\Pm(\bvrm)\Big)^2\ril}\\
&\leq \kappa \inttauO{\lel\Nu_{t,x};|\vv-\vw|^2\ril}+\kappa \inttauO{ \lel\Nu_{t,x};   |\tr\mathbb{D}_{\vv} - \Div\vw|^2\ril }\\
&\qquad+C(\Vert \Grad\vw \Vert_{C([0,T]\times\overline{\Omega})})\int_0^\tau \chi(t) \mathcal{D}(t)\,  \dt +\int_0^\tau C_\kappa(t) {\cal E}_{mv}(\Nu\,|\,\beta,\BR,\BQ,\vw)\dt.
}

Set
\begin{align*}
 A_{\rm dev}&:=\inttauO{\lel\Nu_{t,x};
 |\mathbb T(\mathbb D_{\vv})-\mathbb T(\nabla\vw)|^2\ril},\\
 A_{\rm tr}&:=\inttauO{\lel\Nu_{t,x};
 |\tr\mathbb D_{\vv}-\Div\vw|^2\ril}.
\end{align*}
By \eqref{mv-poincare-korn}, the two terms on the right-hand side of
\eqref{entr_fin} satisfy
\begin{align*}
 &\kappa\inttauO{\lel\Nu_{t,x};|\vv-\vw|^2\ril}
 +\kappa A_{\rm tr}\\
 &\qquad\leq 3\kappa\inttauO{\lel\Nu_{t,x};
 |\vv-\vw|^2+|\mathbb D_{\vv}-\mathbb D\vw|^2\ril}
 \leq3\kappa C_{\rm PK}A_{\rm dev},
\end{align*}
where we used $|\tr\mathbb A|^2\leq3|\mathbb A|^2$. On the other hand, the
viscous term on the left-hand side is exactly
\begin{align*}
 2\mu A_{\rm dev}+\xi A_{\rm tr}.
\end{align*}
Since $\mu>0$ and $\xi\geq0$, we may choose $\kappa>0$ so small that
$3\kappa C_{\rm PK}<2\mu$. The right-hand side is then absorbed by the shear
dissipation, while the bulk term is nonnegative. Gr\"onwall's inequality concludes the proof of
Theorem \ref{thm:entropy}. $\Box$

\subsection*{Acknowledgment} The work of N.C. is supported by the NAWA ULAM grant BPN/SEL/2025/1/00005 /U/00001. The work of M.P. was partially supported by the Czech Science Foundation, project No. 25-16592S. The work of E.Z. was  supported by the EPSRC Early Career Fellowship no. EP/V000586/1.

\subsection*{Data Availability}
 Data sharing is not applicable to this article as no datasets were generated or analyzed
during the current study.

\subsection*{Conflicts of interest}
 All authors certify that there are no conflicts of interest for this work.

\subsection*{Publishing licence}

For the purpose of open access, the author has applied a Creative Commons Attribution (CC BY) licence to any Author Accepted Manuscript version arising from this submission.

\end{document}